\renewcommand\hat{\widehat}
\renewcommand\tilde{\widetilde}
\def\bint{{\ifinner\rlap{\bf\kern.30em--}
\int\else\rlap{\bf\kern.35em--}\int\fi}\ignorespaces}
\def\sbint{{\ifinner\rlap{\bf\kern.32em--}
\hspace{0.078cm}\int\else\rlap{\bf\kern.45em--}\int\fi}\ignorespaces}
\newtheorem{theorem}{Theorem}[section]
\newtheorem{lemma}[theorem]{Lemma}
\theoremstyle{definition}
\newtheorem{definition}[theorem]{Definition}
\numberwithin{equation}{section}
\numberwithin{equation}{section}
\numberwithin{equation}{section}
\begin{document}

\arraycolsep=1pt
\title{\Large\bf Properties of Besov-Lorentz spaces and application to Navier-Stokes equations
\footnotetext{\hspace{-0.15cm}
\endgraf $^\#$\,Corresponding author
%\endgraf $^1$\,School of Mathematics and Statistics, Wuhan University, Wuhan, 430072, China.
%\endgraf $^2$\, qxyang@whu.edu.cn.
%\endgraf $^3$\, 3368554687@qq.com
}}

\author{  Qixiang Yang$^{1}$, Hongwei Li$^{1,\#}$ }

\date{  }
\maketitle

\vspace{-0.8cm}

\begin{center}
\begin{minipage}{13cm}\small
\begin{center}
\emph{ $^1$\,School of Mathematics and Statistics, Wuhan University, Wuhan }430072\emph{, China}
\end{center}

\begin{center}
\emph{ Email: qxyang@whu.edu.cn., }3368554687\emph{@qq.com}
\end{center}

\hrule
\vspace{\baselineskip}

{\noindent{\bf Abstract}\quad 
Inspired by Caffarelli-Kohn-Nirenberg, Fefferman  and Lin,
we try to investigate how to control the set of large value points for the strong solution of Navier-Stokes equations.
Besov-Lorentz spaces have multiple indices
which can reflect complex changes of the set of the large value points.
Hence we consider some properties of Gauss flow, paraproduct flow and couple flow related to the Besov-Lorentz spaces.
When dealing with Lorentz index,
we use wavelets and maximum norm to describe the decay situation
in the binary time ring and to define time-frequency microlocal maximum norm space.
We use maximum operator, $\alpha$-triangle inequality and H\"older inequality etc to get accurate estimates.
As an application,
we get a global wellposedness result of the Navier-Stokes equations
where the solution can reflect how the size of the set of large value points changes.

}
\vspace{\baselineskip}
{\bf Keywords}\quad 
Navier-Stokes equations, Besov-Lorentz spaces, Meyer wavelets, Global well-posedness, Maximum operator
\vspace{\baselineskip}

{\bf MSC(2020)} \quad 35Q30,76D03,42B35,46E30
\vspace{\baselineskip}
\hrule
\end{minipage}
\end{center}
%%%%%%%%%%%%%%%%%%%%%%%%%%%%%%%%%%%%%%%%%%%%%%%%%%%%%%%%%%%%%%%%%%

%\tableofcontents

%%%%%%%%%%%%%%%%%%%%%% section 1 %%%%%%%%%%%%%%%%%%%%%%%%%%%%%%%%%%
%%%%%%%%%%%%%%%%%%%%%%%%%%%%%%%%%%%%%%%%%%%%%%%%%%%%%%%%%%%%%%%%%%%%%
\section{Motivations and main theorems}\label{intro}
%\hskip
%\parindent
\par In this paper, we are concerned with some properties of Besov-Lorentz spaces.
Which gives a mathematical description of the change in the set of large value points of a function.
How the set of large value points of a function changes has particular importance
in the study of partial differential equation.
When Fefferman \cite{Fe} introduced the millennium problem on the Navier-Stokes equations,
he specially cites the results of Caffarelli-Kohn-Nirenberg \cite{CKN} and Lin \cite{Lin} on weak solutions.
He said that these two results showed that the measure of the set of the possible blow-up points is zero.
Therefore, we are particularly concerned with how to control the set of large value points of strong solutions.
Fortunately, we can apply the properties of Besov-Lorentz spaces to establish the global wellposedness of the Navier-Stokes equations.

In the fifties and sixties when function space developed greatly,
the space reflecting the distribution of large value points has special meaning.
Many operator continuities are established using Lorentz spaces.
%Without Lorentz spaces $L^{p,\tau}$, continuity of many operators is difficult to establish.
The appearance of Besov-Lorentz space $\dot{B} ^{\frac{n}{p}-1,q} _{p,r}$ is closely related to the interpolation space of Besov space.
See Peetre \cite{P}. For harmonic analysis and differential equations,
the properties of function flows have special implications.
The solutions of many nonlinear equations are found based on the properties of various flows.
The structure of the function flow related to Besov-Lorentz space
is very complicated due to the Lorentz index
which reflects the size of the set of large value points.
Recently Yang-Yang-Hon \cite{YYH} considered some Besov-Lorentz space
$\dot{B} ^{\frac{n}{p}-1,q} _{p,\infty}$ with $q=\infty$.
Unfortunately, their method of generalized Calder\'on-Zygmund decomposition
cannot be applied to the case where $1\leq q<\infty$.
In this paper, we develop a new set of techniques to free up the restrictions on $q$.
We get some properties on three kinds of function flows
related to Besov-Lorentz spaces $\dot{B} ^{\frac{n}{p}-1,q} _{p,\infty}$.

Our first concern is the properties of Gauss flow.
Gauss flow is the solution of parabolic equations
and has been used in the study of many nonlinear problems.
The first step to establish the wellposedness of Navier-Stokes equations
is the boundedness of Gaussian flow.
See Cannone-Meyer-Planchon \cite{CMP}, Kato \cite{Kat}, Kato-Fujita \cite{KF}, Koch-Tataru \cite{KT},
Miura \cite{yang18}, Li-Xiao-Yang \cite{LXY}, Wu \cite{W2} and Yang-Yang \cite{YY}.
We introduce a class of parametric flow space
${ ^{m'}_{m} \dot{B}}^{\frac{n}{p}-1, q}_{p,\infty}$ in Definition \ref{de:3.2}.
${ ^{m'}_{m} \dot{B}}^{\frac{n}{p}-1, q}_{p,\infty}$
is composed of a single norm with nonlinear attenuation depending on frequency and time.
Here $m$ reflects the rapid decay of the function stream at high frequencies.
$m'$ is related to the low frequency and the different stability of trace functions.
${ ^{0}_{m} \dot{B}}^{\frac{n}{p}-1, q}_{p,\infty}\subset L^{\infty} ({\dot{B}}^{\frac{n}{p}-1, q}_{p,\infty})$.
But if $m'>0$, then ${ ^{m'}_{m} \dot{B}}^{\frac{n}{p}-1, q}_{p,\infty}$
is not a subspace of $L^{\infty} ({\dot{B}}^{\frac{n}{p}-1, q}_{p,\infty})$.
For $f(t,x)\in { ^{m'}_{m} \dot{B}}^{\frac{n}{p}-1, q}_{p,\infty}$,
its trace $f(0,x)$ has different sense.
Further, we equip the parametric flow space
${ ^{m'}_{m} \dot{B}}^{\frac{n}{p}-1, q}_{p,\infty}$ with the norm which is completely discreted for the time and frequency.
That is, we construct the norm by maximum estimation in each binary ring of the time.
Our first result shows the boundedness of Gauss flows in Besov-Lorentz spaces.
\begin{theorem} \label{th:B-to-Y}
Given  $1<p< \infty$, $1\leq q< \infty, m'\geq 0,m>0$ or $1<p< \infty$, $q= \infty, m',m>0$.
If $f\in \dot{B} ^{\frac{n}{p}-1,q} _{p,\infty}$, then $ e^{t\Delta} f \in { ^{m'}_{m} \dot{B}}^{\frac{n}{p}-1, q}_{p,\infty}.$
\end{theorem}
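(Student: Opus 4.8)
The plan is to unwind both norms into the Littlewood–Paley/wavelet language and compare them ring by ring in frequency and dyadically in time. First I would recall that $f\in\dot B^{\frac np-1,q}_{p,\infty}$ means, writing $\Delta_j$ for the Littlewood–Paley projection at frequency $2^j$, that the sequence $\bigl(2^{j(\frac np-1)}\|\Delta_j f\|_{L^p}\bigr)_j$ lies in the Lorentz-type space encoded by the superscript $q$ (weak-$\ell^q$ with respect to $j$, since the lower Lorentz index is $\infty$). The target space $^{m'}_{m}\dot B^{\frac np-1,q}_{p,\infty}$, by its definition (Definition~\ref{de:3.2}), measures $e^{t\Delta}f$ by taking, in each binary time ring $t\sim 2^{-2k}$, a maximum-type norm over that ring, incorporating the nonlinear weights $2^{jm'}$ at low frequency and $2^{-jm}$-type decay at high frequency, and then imposing the same weak-$\ell^q$ summation in the frequency parameter. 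So the whole statement reduces to a pointwise-in-frequency kernel estimate plus a rearrangement/maximal argument to pass from the $\ell^q$ bound on $\Delta_j f$ to the $\ell^q$ bound on the time-discretized norm of $\Delta_j e^{t\Delta}f$.

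The key steps, in order, are: (1) For a fixed frequency block $\Delta_j f$, use the heat-semigroup smoothing estimate $\|\Delta_j e^{t\Delta} g\|_{L^p}\ls e^{-c\,2^{2j}t}\|\Delta_j g\|_{L^p}$ (Bernstein plus the explicit Gaussian kernel, localized to the annulus $|\xi|\sim 2^j$). (2) Evaluate the resulting contribution on the binary time ring $t\in[2^{-2k-1},2^{-2k}]$: the factor $e^{-c\,2^{2j}t}$ behaves like $e^{-c\,2^{2(j-k)}}$, so for $j>k$ it produces exponential gain that beats any polynomial weight $2^{-jm}$, while for $j\le k$ it is $\sim 1$ and one instead exploits the low-frequency weight $2^{jm'}$ together with the fact that on a bounded time ring the extra powers of $t$ convert to powers of $2^{-2k}$. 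This is exactly where the hypotheses $m>0$ and $m'\ge 0$ (resp.\ $m'>0$ when $q=\infty$) are consumed. (3) Take the maximum over $t$ in the ring — since the bound is monotone in $t$ the maximum is attained at an endpoint, so the time-ring maximum norm of $\Delta_j e^{t\Delta}f$ is controlled by a fixed multiple of $\|\Delta_j f\|_{L^p}$ times a weight $w(j,k)$ that decays geometrically in $|j-k|$ on the high-frequency side and is summably controlled on the low-frequency side.

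The final step (4) is the passage to the mixed norm: one must show that the doubly-indexed array $a_{j,k}=2^{j(\frac np-1)}w(j,k)\|\Delta_j f\|_{L^p}$, after taking whatever combination in $k$ the definition of $^{m'}_{m}\dot B$ prescribes (a sup, or an $\ell^\infty$, over the time rings for each frequency, or a convolution-type sum), still lies in weak-$\ell^q$ in the remaining index with norm $\ls\|f\|_{\dot B^{\frac np-1,q}_{p,\infty}}$. Because $w(j,k)$ is essentially a geometric convolution kernel in $j-k$, this is a discrete Young-type inequality, but on \emph{Lorentz} sequence spaces rather than $\ell^q$; here I would invoke the $\alpha$-triangle inequality and the Hölder-in-Lorentz estimates advertised in the abstract (and presumably established in the earlier sections) to handle the case $q<\infty$, and a direct weak-type computation when $q=\infty$.

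The main obstacle I anticipate is precisely this last point: convolution does not act boundedly on weak-$\ell^q$ the way it does on $\ell^q$, so one cannot simply quote Young's inequality. The fix is that the kernel $w(\cdot)$ is not merely in weak-$\ell^1$ but genuinely summable (geometric decay), and convolution with an $\ell^1$ kernel \emph{is} bounded on every Lorentz space $\ell^{q,\infty}$; so the real work is to verify that the low-frequency tail (the $m'$ side), where the "kernel" is $2^{jm'}$ times a truncation, genuinely assembles into an $\ell^1$ kernel in $j-k$ after the time-ring maximum is taken — this is why $m'>0$ is needed in the endpoint case $q=\infty$, and why $m'\ge 0$ suffices when the extra $\ell^q$-summation in $k$ is available. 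Once the kernel is shown to be $\ell^1$, the estimate closes uniformly in the parameters.
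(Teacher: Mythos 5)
There is a genuine gap in your proposal, and it stems from a misreading of which index carries the Lorentz (weak-type) structure. You write that $f\in\dot B^{\frac np-1,q}_{p,\infty}$ means the sequence $\bigl(2^{j(\frac np-1)}\|\Delta_jf\|_{L^p}\bigr)_j$ lies in weak-$\ell^q$, ``since the lower Lorentz index is $\infty$.'' In fact the space is $\dot B^{s,q}_{p,\infty}=\ell^{s,q}(L^{p,\infty})$ (Lemma~\ref{le:2.2}(ii)): the frequency sum is an \emph{ordinary, strong} $\ell^q$ sum, and it is the \emph{spatial} norm that is weak, $L^{p,\infty}$. As a consequence, the difficulty you flag at the end — that discrete convolution on weak-$\ell^q$ fails Young's inequality — is a phantom problem: the $j$-summation is a plain $\ell^q$ sum with a geometrically decaying kernel, and the standard $\alpha$-triangle (for $q\le p$) or H\"older (for $q>p$) step closes it without incident.

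The obstacle you have not addressed is the one the paper actually spends its effort on: the norm of the target space takes a \emph{pointwise supremum over $t$} in a binary time ring, $f_{j,j_t}(x)=\sup_{2^{-2j_t}\le t<2^{2-2j_t}}f_j(t,x)$, \emph{inside} the weak-$L^p$ quasinorm $\sup_\lambda\lambda|\{x:f_{j,j_t}(x)>\lambda\}|^{1/p}$. Your step~(1) is an $L^p$ (strong) smoothing estimate, which does not directly control this quantity; and your step~(3) claims the maximum in $t$ is attained at an endpoint because ``the bound is monotone in $t$,'' but while the $L^p$ norm of $\Delta_je^{t\Delta}f$ is monotone in $t$, the pointwise quantity $f_j(t,x)$ is not, so the supremum over the ring cannot simply be discharged to an endpoint. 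The paper resolves this with a two-part device that is absent from your plan: the wavelet decay estimate (Lemma~\ref{le:2.6}) gives a bound $f_j(t,x)\lesssim\sum_{|j-j'|\le1}M(f_{j'})(x)$ that is \emph{uniform in $t$} on the relevant ring, so the time-supremum is absorbed before any integration in $x$; and then the $L^{p,\infty}\to L^{p,\infty}$ boundedness of the Hardy--Littlewood maximal operator (Lemma~\ref{le:2.7}, Chung--Hunt--Kurtz) converts the maximal bound back into the original weak-$L^p$ level-set data of $f_{j'}$. Without this maximal-function domination, one has no route from a pointwise sup in $t$ to a weak-$L^p$ estimate, and the proof cannot close. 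You should restate the space correctly, replace your $L^p$ semigroup estimate by a $t$-uniform pointwise kernel bound leading into the maximal function, and only then run the geometric-in-$(j,j_t)$ bookkeeping with the $\alpha$-triangle/H\"older dichotomy.
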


Let $P_{j}u$ and $Q^{\epsilon}_{j}v$ be the quantities defined below Lemma \ref{lem:2.1111}.
The sum $\sum_{j}  P_{j-2}u Q^{\epsilon}_{j}v$ is a quantity relative to the paraproduct.
In the 1980s J. M. Bony applied para-product to deal with nonlinear quantities.
At the same time,
R. Coifman and Y. Meyer \cite{Me} also used the concept of para-product
when dealing with the Calder\'on-Zygmund operator.
The compensatory compactness theory of P.L. Lions tells us that
the regularity of a para-product function $f(x)$ can be raised to the Hardy space $H^{1}$
if $f(x)$ belongs to $L^{1}$.
For $l,l',l''\in \{1,\cdots,n\}$,
denote $A_{l} = e^{(t-s)\Delta} \partial_{x_l}$
and $A_{l,l',l''} = e^{(t-s)\Delta} \partial_{x_l}\partial_{x_{l'}}\partial_{x_{l''}}(-\Delta)^{-1}$.
Then we call
$$ P_{l}(u,v)= \!\int^{t}_{0} A_{l} \! \left(\sum\limits_{j}  P_{j-2}u Q^{\epsilon}_{j}v\right)\! ds {\mbox \rm\, and \, }
P_{l,l',l''}(u,v)=\! \int^{t}_{0} A_{l,l',l''}\! \left(\sum\limits_{j}  P_{j-2}u Q^{\epsilon}_{j}v\right)\! ds$$
to be the para-product flow relative to $u$ and $v$.
The second purpose of this paper is to prove the boundedness of the paraproduct flow in the parametric flow space.
\begin{theorem} \label{para}
Given $l,l',l''\in \{1,\cdots,n\}$, $1<p< \infty, 1\leq q\leq \infty, 0\leq m'< \frac{1}{2}, m>1.$
If $u,v\in { ^{m'}_{m} \dot{B}}^{\frac{n}{p}-1, q}_{p,\infty}$, then
$$ P_{l}(u,v), P_{l,l',l''}(u,v)\in { ^{m'}_{m} \dot{B}}^{\frac{n}{p}-1, q}_{p,\infty}.$$
\end{theorem}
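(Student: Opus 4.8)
The plan is to reduce the bilinear bound to a frequency-localized estimate and then carry the time weights through the Duhamel integral. First note that $\partial_{x_l}$ and $\partial_{x_l}\partial_{x_{l'}}\partial_{x_{l''}}(-\Delta)^{-1}$ are Fourier multipliers which, after rescaling the annulus $\{|\xi|\sim 2^{j}\}$ to $\{|\eta|\sim 1\}$, become uniformly Mikhlin multipliers; hence for any $g$ with spectrum in $\{|\xi|\sim 2^{j}\}$ one has $\|A_{l}g\|_{L^{p,q}}+\|A_{l,l',l''}g\|_{L^{p,q}}\lesssim 2^{j}e^{-c(t-s)2^{2j}}\|g\|_{L^{p,q}}$, where $L^{p,q}$ is the Lorentz space on which ${ ^{m'}_{m} \dot{B}}^{\frac{n}{p}-1, q}_{p,\infty}$ is modeled. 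Thus $P_{l}$ and $P_{l,l',l''}$ are controlled by one and the same computation. Since $P_{j-2}u$ has spectrum in a ball of radius $\sim 2^{j-2}$ and $Q^{\epsilon}_{j}v$ in an annulus of radius $\sim 2^{j}$, the product $P_{j-2}u\,Q^{\epsilon}_{j}v$ has spectrum in an annulus of radius $\sim 2^{j}$, so that $Q^{\epsilon}_{j'}\big(\sum_{j}P_{j-2}u\,Q^{\epsilon}_{j}v\big)=\sum_{|j-j'|\le C}Q^{\epsilon}_{j'}\big(P_{j-2}u\,Q^{\epsilon}_{j}v\big)$; it therefore suffices to bound, for each $j'$ and each $|j-j'|\le C$, the quantity $\big\|Q^{\epsilon}_{j'}\!\int_{0}^{t}A_{l}\big(P_{j-2}u(s)\,Q^{\epsilon}_{j}v(s)\big)\,ds\big\|_{L^{p,q}}$ against the time--frequency weight $2^{-j'(\frac{n}{p}-1)}\theta_{m,m'}(2^{2j'}t)$ coming from Definition \ref{de:3.2}, where (reading off that definition) $\theta_{m,m'}(\tau)\asymp\tau^{-m'}$ for $0<\tau\le 1$ and $\theta_{m,m'}(\tau)\asymp\tau^{-m}$ for $\tau\ge 1$.

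The second step is a pointwise-in-$s$ bound on the product. By H\"older's inequality in Lorentz spaces, $\|P_{j-2}u(s)\,Q^{\epsilon}_{j}v(s)\|_{L^{p,q}}\lesssim\|P_{j-2}u(s)\|_{L^{\infty}}\,\|Q^{\epsilon}_{j}v(s)\|_{L^{p,q}}$. Applying Bernstein's inequality in the Lorentz scale together with the defining bound $\|Q^{\epsilon}_{j''}u(s)\|_{L^{p,q}}\lesssim 2^{-j''(\frac{n}{p}-1)}\theta_{m,m'}(2^{2j''}s)\|u\|$ to each term and summing over $j''\le j-2$ — the condition $m'<\tfrac12$ ensuring convergence of the low-frequency part, and $m>\tfrac12$ that of the high-frequency part — yields $\|P_{j-2}u(s)\|_{L^{\infty}}\lesssim 2^{j(1-2m')}s^{-m'}\|u\|$ when $2^{2j}s\le 1$ and $\lesssim s^{-1/2}\|u\|$ when $2^{2j}s\ge 1$. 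Multiplying by $\|Q^{\epsilon}_{j}v(s)\|_{L^{p,q}}\lesssim 2^{-j(\frac{n}{p}-1)}\theta_{m,m'}(2^{2j}s)\|v\|$ gives
\begin{equation*}
\|P_{j-2}u(s)\,Q^{\epsilon}_{j}v(s)\|_{L^{p,q}}\lesssim 2^{-j(\frac{n}{p}-1)}\,2^{j}\,\|u\|\,\|v\|\times
\begin{cases}(2^{2j}s)^{-2m'}, & 2^{2j}s\le 1,\\[3pt](2^{2j}s)^{-m-\frac12}, & 2^{2j}s\ge 1.\end{cases}
\end{equation*}

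The third step is the Duhamel integration. Insert $\|A_{l}(\cdot)\|_{L^{p,q}}\lesssim 2^{j}e^{-c(t-s)2^{2j}}\|\cdot\|_{L^{p,q}}$ and split $\int_{0}^{t}=\int_{0}^{\min(t,2^{-2j})}+\int_{\min(t,2^{-2j})}^{t}$. On the range $s\le 2^{-2j}$ one uses $e^{-c(t-s)2^{2j}}\lesssim\min\{1,(2^{2j}t)^{-m}\}$ together with the elementary integral $\int_{0}^{\tau}r^{-2m'}\,dr\asymp\tau^{1-2m'}$, which converges exactly because $m'<\tfrac12$, and a short computation matches the outcome against $\theta_{m,m'}(2^{2j}t)$. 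On the range $2^{-2j}\le s\le t$ one changes variable $\tau=2^{2j}s$ and is reduced to $\int_{1}^{2^{2j}t}e^{-c(2^{2j}t-\tau)}\tau^{-m-\frac12}\,d\tau$, whose near-diagonal part is $\sim (2^{2j}t)^{-m-\frac12}$ and whose tail is dominated by $\int_{1}^{\infty}\tau^{-m-\frac12}\,d\tau<\infty$; both are $\lesssim (2^{2j}t)^{-m}$. Collecting the pieces gives $\|Q^{\epsilon}_{j'}P_{l}(u,v)(t)\|_{L^{p,q}}\lesssim 2^{-j'(\frac{n}{p}-1)}\theta_{m,m'}(2^{2j'}t)\|u\|\,\|v\|$ for $|j-j'|\le C$, and the identical bound for $P_{l,l',l''}(u,v)$.

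It remains to assemble membership in ${ ^{m'}_{m} \dot{B}}^{\frac{n}{p}-1, q}_{p,\infty}$ from these frequency-wise bounds. Since $\theta_{m,m'}$ is essentially monotone on each binary time ring, passing to the discrete time-ring maximum norm of Definition \ref{de:3.2} costs only a constant; the Lorentz index is then handled through the time-frequency microlocal maximum norm exactly as in the proof of Theorem \ref{th:B-to-Y}, using the maximum operator and the $\alpha$-triangle inequality, and the finite sum over $|j-j'|\le C$ is harmless because $2^{j'}\asymp 2^{j}$. I expect the main obstacle to be precisely this bookkeeping around the transition frequency $s\asymp 2^{-2j}$: one must verify that the low/high splitting of $P_{j-2}u$, the matching splitting of the Duhamel integral, and the time-ring discretization of the target norm all fit together without losing a power of $2^{2j}t$, and it is here that the gaps between the exponents $-m-\tfrac12$ and $-m$, and between $-2m'$ and $-m'$, are spent — the hypotheses $0\le m'<\tfrac12$ and $m>1$ being exactly what keep every such integral and dyadic sum convergent.
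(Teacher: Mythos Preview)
Your frequency-wise bound $\|Q^{\epsilon}_{j'}P_{l}(u,v)(t)\|\lesssim 2^{-j'(\frac{n}{p}-1)}\theta_{m,m'}(2^{2j'}t)\|u\|\,\|v\|$ only yields $A^{p,m}_{j',j_t},A^{p,m'}_{j',j_t}\lesssim\|u\|\,\|v\|$ \emph{uniformly} in $j'$; this is enough for $q=\infty$ but for $q<\infty$ the target norm is an $\ell^{q}$ sum over $j'$, and a uniform bound gives a divergent series. (Note also a notational slip: the underlying Lorentz space is $L^{p,\infty}$, not $L^{p,q}$ --- the index $q$ in $\dot{B}^{\frac{n}{p}-1,q}_{p,\infty}$ is the outer $\ell^{q}$ over frequencies, and handling it for $q<\infty$ is precisely the novelty of this paper over \cite{YYH}.) The $\ell^{q}$ summability has to come from $v$: rather than discarding $\|Q^{\epsilon}_{j}v(s)\|$ in favour of $\|v\|$ in your second step, one must carry it through the Duhamel integral and then exchange the $\ell^{q}_{j'}$ sum with the time integral. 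Because that integral mixes all time scales $s\in(0,t)$ while the hypothesis on $v$ is an $\ell^{q}_{j}$ bound on each fixed dyadic time ring $j_{s}$, this exchange is the crux, and it is not covered by the proof of Theorem~\ref{th:B-to-Y}, which has no time integral at all. The paper decomposes $\int_{0}^{t}=\sum_{j_{s}}\int_{2^{-2j_{s}}}^{2^{2-2j_{s}}}$, bounds the integrand on each ring by $M(v_{j,j_{s}})(x)$ via Lemma~\ref{le:2.6}, and then pushes the $\ell^{q}_{j'}$ sum past the $j_{s}$ sum by the $\alpha$-triangle inequality when $q\le p$ and by H\"older with an auxiliary geometric weight when $q>p$; your last paragraph gestures at this machinery but does not supply it.

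There is a second, coupled gap: the norm in Definition~\ref{de:3.2} takes the supremum in $t$ \emph{pointwise in $x$} before the $L^{p,\infty}$ norm, i.e.\ it measures $\|\sup_{t}f_{j'}(t,\cdot)\|_{L^{p,\infty}}$, not $\sup_{t}\|f_{j'}(t,\cdot)\|_{L^{p,\infty}}$. Your fixed-$t$ Lorentz estimates only control the latter, and the two do not commute. The paper circumvents this by working first at the level of wavelet coefficients to produce a pointwise-in-$x$ majorant $\sum_{j_{s}}c_{j,j',j_{s}}M(v_{j,j_{s}})(x)$ that is independent of $t$ on each ring, and only then taking the $L^{p,\infty}$ norm.
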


Let $Q_{j}v$ be the quantity defined below Lemma \ref{lem:2.1111}.
We call $\sum\limits_{j} Q_j u Q_{j}v$ the coupling product of $u$ and $v$,
which reflects the coupling of two functions $u$ and $v$.
For $l,l',l''\in \{1,\cdots,n\}$, we call
$$ C_{l}(u,v)= \int^{t}_{0} A_{l}\! \left(\sum\limits_{j} Q_j u Q_{j}v\right)\! ds {\mbox \rm\,  and \,}
C_{l,l',l''}(u,v)= \int^{t}_{0} A_{l,l',l''} \! \left(\sum\limits_{j} Q_j u Q_{j}v\right)\! ds$$
to be the couple product flow relative to $u$ and $v$.
For ordinary functions, coupled products are easy to work with,
so only the para product is needed to deal with.
But for the general distribution,
it is easy to deal with the para product
and difficult to deal with the coupled product.
The third purpose of this paper is to prove the boundedness of the couple product flow in the parametric flow space.

\begin{theorem} \label{cou}
Given $l,l',l''\in \{1,\cdots,n\}$, $1<p< \infty, 1\leq q\leq \infty, 0\leq m'< \frac{1}{2}, m>1.$
If $u,v\in { ^{m'}_{m} \dot{B}}^{\frac{n}{p}-1, q}_{p,\infty}$, then
$$ C_{l}(u,v), C_{l,l',l''}(u,v) \in { ^{m'}_{m} \dot{B}}^{\frac{n}{p}-1, q}_{p,\infty}.$$
\end{theorem}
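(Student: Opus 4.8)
The plan is to follow the scheme already used for Theorem \ref{para}, the essential new feature being that the couple product, unlike the paraproduct, transfers the frequency-$2^{j}$ content of $u$ and $v$ to \emph{every} output frequency $2^{k}$ with $k\le j$; this low-frequency spreading is the difficulty flagged in the text. First I would unwind Definition \ref{de:3.2}: since the parametric flow norm is, by construction, a maximum over each binary time ring and over each frequency block $Q_{k}$, weighted by the prescribed frequency/time attenuation, it suffices to estimate, uniformly in $k\in\zz$ and in $t$ in a fixed dyadic ring, the weighted quantity coming from $2^{k(\frac{n}{p}-1)}\,\|Q_{k}C_{l}(u,v)(t,\cdot)\|_{L^{p,q}}$ (and likewise for $C_{l,l',l''}$). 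Expanding $u=\sum_{j}Q_{j}u$ and $v=\sum_{j}Q_{j}v$ with the blocks from below Lemma \ref{lem:2.1111}, and using that $Q_{j}u\cdot Q_{j}v$ is frequency-supported in $\{|\xi|\lesssim 2^{j}\}$, only the indices $j\gtrsim k$ contribute, so the task becomes to control
$$ 2^{k(\frac{n}{p}-1)}\sum_{j\gtrsim k}\int_{0}^{t}\bigl\|Q_{k}A_{l}\bigl(Q_{j}u(s)\,Q_{j}v(s)\bigr)\bigr\|_{L^{p,q}}\,ds $$
by a geometric series in $j-k$.

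For a single summand I would chain the following estimates. (i) Heat smoothing on localized data: for $g$ frequency-localized near $2^{k}$, $\|Q_{k}A_{l}e^{(t-s)\Delta}g\|_{L^{p,q}}\lesssim 2^{k}e^{-c(t-s)2^{2k}}\|g\|_{L^{p,q}}$, and the same bound holds with $A_{l,l',l''}$ since, after localization, the three derivatives absorb the factor $(-\Delta)^{-1}$. (ii) A product estimate $\|Q_{j}u\,Q_{j}v\|_{L^{p,q}}\lesssim 2^{jn/p}\|Q_{j}u\|_{L^{p,q}}\|Q_{j}v\|_{L^{p,q}}$, obtained by Hölder with one factor placed in $L^{\infty}$ via the Bernstein inequality; the Lorentz index $q$ is carried passively through Hölder, Bernstein and the Littlewood--Paley projections, all valid for $1<p<\infty$ and $1\le q\le\infty$, so no separate argument is needed for the endpoint $q=\infty$ or for $q<\infty$. (iii) The time integral: inserting the input norms of $u$ and $v$ at time $s$, which by Definition \ref{de:3.2} each carry a high-frequency decay governed by $m$ (relative to the Besov weight $2^{-j(\frac{n}{p}-1)}$) and a low-frequency, small-$s$ weight governed by $m'$, reduces each summand to an elementary (Beta-type) integral of the form $\int_{0}^{t}2^{k}e^{-c(t-s)2^{2k}}s^{-2m'}\,ds$, whose low-frequency regime $t2^{2k}\lesssim 1$ is exactly where the requirement $m'<\tfrac12$ is used, and whose value is compatible with the attenuation weight attached to the $k$-th block at time $t$. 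The $j$-sum is then put back together with the $\az$-triangle inequality for the $\az\le 1$ matched to the Besov $\ell^{\infty}$ exponent and to the Lorentz index.

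The main obstacle is precisely this sum over $j\gtrsim k$ forced by the low-frequency spreading: every output block $Q_{k}$ receives a contribution from all higher input frequencies, so convergence can only come from spending part of the high-frequency gain of the input norm. After balancing the Bernstein loss $2^{jn/p}$, the derivative gain $2^{k}$ from $A_{l}$ (resp.\ $A_{l,l',l''}$), the heat factor, and the two input weights at frequency $2^{j}$, the $j$-th term is dominated by a genuine geometric factor $2^{-(j-k)\sigma}$ with $\sigma>0$ \emph{only when} $m>1$; this is the quantitative role of the hypothesis $m>1$. The delicate part is to arrange that this exponent bookkeeping closes \emph{simultaneously} with the time-weight constraint $0\le m'<\tfrac12$, uniformly over the binary time rings and over the trace parameter $m'$, so that the resulting function lands back in ${ ^{m'}_{m} \dot{B}}^{\frac{n}{p}-1, q}_{p,\infty}$ rather than in a weaker space. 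Once this is arranged, the two flows $C_{l}(u,v)$ and $C_{l,l',l''}(u,v)$ are handled in exactly the same way, because $A_{l}$ and $A_{l,l',l''}$ obey identical frequency-localized bounds.
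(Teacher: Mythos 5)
Your outline has the right skeleton: expand in frequency, observe that $Q_{k}$ of the couple product only sees input frequencies $j\gtrsim k$, use the high-frequency weight $m>1$ to produce geometric decay in $j-k$, and use $m'<\tfrac12$ to make the small-time integral converge. It also correctly names the low-frequency spreading as the difficulty. But there are two genuine gaps, and they sit exactly where the paper spends its effort.

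First, you have misidentified the role of $q$. In this paper the Lorentz second index is frozen at $\infty$: the spatial norm inside Definition \ref{de:3.2} is the weak-$L^{p}$ quasi-norm $\sup_{\lambda>0}\lambda\,|\{x:f_{j,j_t}(x)>\lambda\}|^{1/p}$, and $q$ is the exponent of the $\ell^{q}$-sum over the frequency index $j$ (compare Lemma \ref{le:2.2}(ii)). Your claim that ``the Lorentz index $q$ is carried passively through H\"older, Bernstein and the Littlewood--Paley projections, so no separate argument is needed for $q=\infty$ or $q<\infty$'' is therefore aimed at the wrong object. After the Bernstein step you are left, for each output scale, with an infinite sum over $j\gtrsim k$ (and over a time ring index $j_{s}$) of weak-$L^{p}$ quantities, and that sum must then be raised to the power $q$ and summed over the output scale. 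The defining quasi-norm of $L^{p,\infty}$ is not subadditive, so this double sum cannot be closed by ``putting the pieces back together with the $\alpha$-triangle inequality'' without further structure. This is precisely what the paper's proof supplies: it works on wavelet coefficients, groups them by $1+|k'-2^{j'-j}k|$ so that Lemma \ref{le:2.6} converts the spatial sums into Hardy--Littlewood maximal functions, invokes the weak-$L^{p}$ bound for $M$ (Lemma \ref{le:2.7}), distributes convergent weights $2^{-\delta j}$, $2^{-\delta' j_{s}}$ across the sums before applying Chebyshev, and then splits into $q\le p$ (via the $\alpha$-triangle inequality on the level-set measures) and $q>p$ (via H\"older in $j$ and $j_{s}$). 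None of this is present in your sketch, and it is the technical heart of the theorem; the ``delicate bookkeeping'' you defer is exactly this machinery.

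Second, the reduction to a single Beta-type integral is not faithful. If one factor of $Q_{j}u\,Q_{j}v$ is sent to $L^{\infty}$ (to harvest the $B_{m,m'}$ decay) and the other is kept in weak-$L^{p}$, the time integrand carries a single power $s^{-m'}$ (low-frequency regime) or $s^{-m}$ (high-frequency regime), not $s^{-2m'}$ as you wrote. More importantly, one integral does not suffice: because the admissible pointwise bound on $v^{\epsilon}_{j,k}(s)$ switches at $s2^{2j}=1$ and because the heat factor $e^{-c(t-s)2^{2j'}}$ behaves differently near $s=t$, the paper splits $\int_{0}^{t}=\int_{0}^{2^{-2j}}+\int_{2^{-2j}}^{t/2}+\int_{t/2}^{t}$ (each part producing a different geometric factor in $j_{s}$). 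Without this split, the exponent balance you assert (geometric gain $2^{-(j-k)\sigma}$ from $m>1$ together with integrability from $m'<\tfrac12$) cannot be verified uniformly over the time rings. So the plan is plausible in outline but does not close as written.
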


The forth purpose and the main purpose of this paper is to apply the properties of Besov-Lorentz space
to the study of the global wellposedness of the following Navier-Stokes equations \eqref{eq:dirichlet}.
The Cauchy problem of Navier-Stokes equations
on the half-space $\mathbb{R}^{1+n}_+=(0,\infty)\times\mathbb{R}^{n}$ is given as follows:
\begin{equation}\label{eq:dirichlet}
\begin{cases}
\partial_tu-\Delta u+(u\cdot\nabla) u-\nabla p=0,  & in\;\mathbb{R}^{1+n}_+\,;\\
\nabla\cdot u=0,  & in\;\mathbb{R}^{1+n}_+\,;\\
u\big|_{t=0}=f,   & in\;\mathbb{R}^{n}\,.
\end{cases}
\end{equation}
where $u(t,x)$ stands for velocity vector and $p(t,x)$ stands for the pressure of the fluid at the point $(t,x)\in\mathbb{R}^{1+n}_+$.
For $i=1,2,...,n$, let $R_i$ be the Riesz transforms and write
\begin{equation}
\begin{split}
\mathbb{P}=\{\delta_{l,l^{\prime}}+R&_{l}R_{l^{\prime}}\}, l,l^{\prime}=1,2,...,n\,;\\
\mathbb{P} \nabla(u\otimes u)= \sum_{l} \partial_{l}&(u_{l}u)+ \sum_{l} \sum_{l^{\prime}} R_{l} R_{l^{\prime}}\nabla(u_{l} u_{l^{\prime}})\,;  \\
B(u,u)(t,x)=& \int_{0}^{t} e^{(t-s)\Delta}\mathbb {P}\nabla(u\otimes u)ds\,;\\
e^{\widehat{t\Delta}}f(\xi)=&e^{-t\lvert \xi\rvert^2}\hat{f}(\xi)\,.
\end{split}
\end{equation}
The solution to the above Navier-Stokes equations \eqref{eq:dirichlet} is given by the following integral equation.
\begin{equation}\label{eq:0}
u(t,x)=e^{t\Delta}f(x)-B(u,u)(t,x)\,.
\end{equation}
\par The solution to the above integral equation is usually obtained via the following iterative process.
\begin{equation}
\begin{split}
u^{(0)}(t,x)&=e^{t\Delta}f(x)\,;\\
u^{(\tau+1)}(t,x)=u^{(0)}(t,x)-B(u&^{(\tau)},u^{(\tau)})(t,x),   \forall \tau=0,1,2,\cdots\,.
\end{split}
\end{equation}
\par
For some initial value $f\in X$,
if $u^{(\tau)}(t,x)$ belong to iterative space $Y(X)$ and converges to $u(t,x)$ in the iterative space $Y(X)$,
then $u(t,x)$ is called the mild solution of the equations.
The mild solution was first given by Kato-Fujita \cite{KF} in the 1960s.
We can see that there are a great number of studies on the wellposedness of Navier-Stokes equations.
See Kato \cite{Kat}, Miura \cite{yang18}, Meyer-Coifman \cite{MeCo}, Li-Xiao-Yang \cite{LXY} and Yang-Yang \cite{YY} .
For these work, the iterative space $Y(X)\subset L^\infty(X)$.
Until now, the largest initial value space is the ${\rm BMO}^{-1}$ space
in Koch-Tataru \cite{KT}.
But Yang-Yang-Wu \cite{YYW} proved that
Koch-Tataru's iteration space is not a subspace of  $L^\infty({\rm BMO}^{-1}).$

\par  Lorentz type spaces reflect the distribution of large value points of the function.
Since the blow-up phenomenon of the solution of the equations
is closely related to the evolution of the distribution of large value points,
it is an interesting problem how to establish the global wellposedness in the Lorentz type space.
However, Lorentz index seriously affects the estimation of the global wellposedness.
Because when estimating,
you have to deal with the commutativity
between the integrability of the function with respect to time and the Lorentz index.
Barraza \cite{Ba} and Meyer-Coifman \cite{MeCo}  have considered
the homogeneous Lorentz space $\dot{L}^{n,\infty}(\mathbb{R}^{n})
= \{ f(x)\in L^{n,\infty}(\mathbb{R}^{n}), \forall \lambda>0, \lambda f(\lambda x)= f(x)\}
=: L^{n}(S^{n-1})$.
They use the condition that
$f(x) \in \dot{L}^{n,\infty}(\mathbb{R}^{n})\Leftrightarrow f(x)|_{S^{n-1}} \in L^{n}$
to avoid the Lorentz index,
thus obtaining the global wellposedness of the equations \eqref{eq:dirichlet} with initial values in Lorentz space.
Afterwards,
Hobus-Saal \cite{HS} considered the initial data in  Triebel-Lizorkin-Lorentz spaces.
Because local wellposedness is less sensitive to Lorentz index than global wellposedness
when making estimates,
they obtain local wellposedness results for such initial value spaces.
Finding the proper way to control the influence of Lorentz index
is the key to how to get the global wellposedness of the solution
for the initial value in the Lorentz type space.
Yang-Yang-Hon \cite{YYH} considered the Besov-Lorentz spaces $\dot{B}^{\frac{n}{p}-1,q}_{p, \infty}$ with $q=\infty$.
For $q=\infty$,
they could use generalized Calder\'on-Zygmund decomposition and other techniques
to establish the global wellposedness of the solution for the initial value in such a space.
Unfortunately, their method does not apply to the case where $1\leq q<\infty$.
In this paper, we consider the Besov-Lorentz spaces $\dot{B}^{\frac{n}{p}-1,q}_{p, \infty}(1\leq q \leq \infty)$.
We introduce iterative space $ {^{m'}_{m} \dot{B}}^{\frac{n}{p}-1,q}_{p,\infty} $  in Definition \ref{de:3.2}
similar to Yang-Yang-Hon \cite{YYH}.
But we introduced a new set of techniques to control Lorentz index.
As an application of the properties of the previous Besov-Lorentz spaces,
we get the following global wellposedness result:

\begin{theorem}\label{mthmain} Assume that $1<p<\infty, 1\leq q< \infty, 0\leq m'<\frac{1}{2}, m> 1$ or $1<p<\infty, q= \infty, 0< m'<\frac{1}{2}, m> 1$.
The Navier-Stokes equations have a unique smooth solution in
$({^{m'}_{m} \dot{B}}^{\frac{n}{p}-1,q}_{p,\infty} )^n$ for initial data $f(x)$ with $\nabla \cdot f =0$ and $\|f\|_{(\dot{B}^{\frac{n}{p}-1,q}_{p,\infty})^n}$ small
enough.
\end{theorem}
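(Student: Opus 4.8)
The plan is to solve the integral equation \eqref{eq:0} by the Kato--Fujita--Picard scheme directly in the Banach space $Y:=({^{m'}_{m} \dot{B}}^{\frac{n}{p}-1,q}_{p,\infty})^n$, using the three flow estimates above as the only analytic input. The first approximation $u^{(0)}=e^{t\Delta}f$ lies in $Y$ by Theorem~\ref{th:B-to-Y} (applied componentwise), with $\|u^{(0)}\|_{Y}\lesssim \|f\|_{(\dot{B}^{\frac{n}{p}-1,q}_{p,\infty})^n}=:\varepsilon_0$. Note that the ranges of $p,q,m',m$ in the statement are exactly the common range of Theorems~\ref{th:B-to-Y}, \ref{para} and \ref{cou}; the extra requirement $m'>0$ in the case $q=\infty$ is precisely the one imposed by Theorem~\ref{th:B-to-Y}.

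Next I would reduce the bilinear term to the para-product and couple product flows. Write $B(u,v)=\int_0^t e^{(t-s)\Delta}\,\mathbb P\nabla(u\otimes v)\,ds$; by the formula for $\mathbb P\nabla$ recalled before \eqref{eq:dirichlet}, the $i$-th component of $\mathbb P\nabla(u\otimes v)$ is a finite linear combination of the terms $\partial_{l}(u_{l}v_{i})$ and $R_lR_{l'}\partial_{i}(u_{l}v_{l'})=\partial_{l}\partial_{l'}\partial_{i}(-\Delta)^{-1}(u_{l}v_{l'})$. Hence every component of $B(u,v)$ is a finite sum of integrals $\int_0^t A_{l}(\cdot)\,ds$ and $\int_0^t A_{l,l',i}(\cdot)\,ds$ with $A_{l}$, $A_{l,l',l''}$ exactly as in Theorems~\ref{para}--\ref{cou}, acting on products $u_a v_b$. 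Applying Bony's decomposition $u_a v_b=\sum_j P_{j-2}u_a\,Q_j^{\epsilon}v_b+\sum_j P_{j-2}v_b\,Q_j^{\epsilon}u_a+\sum_j Q_j u_a\,Q_j v_b$ to each such product, one expresses $B(u,v)$ as a finite sum of terms of the exact shapes $P_{l}(\cdot,\cdot)$, $P_{l,l',l''}(\cdot,\cdot)$, $C_{l}(\cdot,\cdot)$, $C_{l,l',l''}(\cdot,\cdot)$. Theorems~\ref{para} and \ref{cou} then yield the bilinear estimate $\|B(u,v)\|_{Y}\le C_{B}\,\|u\|_{Y}\|v\|_{Y}$ with $C_{B}$ independent of $u,v$.

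With $\|u^{(0)}\|_{Y}\le C\varepsilon_0$ and the bilinear bound in hand, the classical fixed-point lemma for quadratic equations $u=u^{(0)}-B(u,u)$ in a Banach space applies: if $4C_{B}C\varepsilon_0<1$, the iterates $u^{(\tau+1)}=u^{(0)}-B(u^{(\tau)},u^{(\tau)})$ form a Cauchy sequence in $Y$ and converge to a solution $u\in Y$ of \eqref{eq:0} with $\|u\|_{Y}\le 2C\varepsilon_0$, unique in the ball $\{\|v\|_{Y}\le 2C\varepsilon_0\}$ by contractivity; uniqueness in all of $Y$ then follows by a standard continuation argument, and the divergence-free condition is preserved along the iteration since $\nabla\cdot f=0$ and $\mathbb P$ annihilates gradients. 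For the smoothness, one observes that membership in $Y$ forces the high-frequency part of $u(t,\cdot)$ to decay, for each $t>0$, with a rate governed by the large parameter $m>1$, so $u(t,\cdot)\in C^\infty$ for $t>0$; differentiating \eqref{eq:0} in $x$ and re-running the bilinear estimate on the spatial derivatives, together with the smoothing of $e^{(t-s)\Delta}$, upgrades this to joint smoothness on $\mathbb R^{1+n}_+$, and the pressure is recovered from $u$ in the usual way. This gives global well-posedness for $\|f\|_{(\dot{B}^{\frac{n}{p}-1,q}_{p,\infty})^n}$ small enough.

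The main obstacle is entirely contained in Theorems~\ref{para} and \ref{cou}: proving the bilinear bound $\|B(u,v)\|_{Y}\lesssim\|u\|_{Y}\|v\|_{Y}$ while simultaneously controlling the Lorentz (second) index $\infty$ and the Besov fine index $q\in[1,\infty]$, i.e.\ correctly commuting time-integration past the Lorentz/wavelet structure. Once those estimates are granted --- via the time-frequency microlocal maximum norm, the $\alpha$-triangle inequality, the maximum operator and H\"older's inequality, as announced --- the argument above is routine bookkeeping.
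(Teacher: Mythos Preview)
Your proposal is correct and follows essentially the same route as the paper: the paper packages the bilinear estimate as a separate Theorem \ref{th:bilinear}, proved exactly as you outline (Bony decomposition of $uv$ via \eqref{eq:decompose}, reduction to $P_{l},P_{l,l',l''},C_{l},C_{l,l',l''}$, then Theorems~\ref{para} and \ref{cou}), and then runs the Picard iteration \eqref{eq:6.4} using Theorem~\ref{th:B-to-Y} for the linear piece and Picard's contraction principle to close. The only cosmetic difference is that the paper's decomposition \eqref{eq:decompose} keeps the near-diagonal piece $\sum_{0<|j-j'|\le 2}Q_j u\,Q_{j'}v$ explicit before remarking it is handled like the true diagonal, whereas you absorb it directly into the couple term; your extra remarks on smoothness and propagation of $\nabla\cdot u=0$ go slightly beyond what the paper spells out.
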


Our iterative space $ {^{m'}_{m} \dot{B}}^{\frac{n}{p}-1,q}_{p,\infty} $ in this paper
is given by the space defined by a single norm with nonlinear attenuation,
where $m$ reflects the rapid decay rate of the high frequency norm with time.
$m'$ here is related to the low frequency,
which can reflect different sense of stability of solution.
For initial data space $X$, the classic stability needs
that the iteration space $Y(X)\subset L^{\infty}(X)$.
We know $ {^{0}_{m} \dot{B}}^{\frac{n}{p}-1,q}_{p,\infty}\subset  L^{\infty}(\dot{B}^{\frac{n}{p}-1,q}_{p,\infty})$.
But if the iteration space $Y(X)$ is not the subspace of $L^{\infty}(X)$,
then there exists some nonlinearity between the initial space $X$ and the iteration space $Y(X)$,
which afford some topological deformation capability.
We know, if $m'>0$, then $ {^{m'}_{m} \dot{B}}^{\frac{n}{p}-1,q}_{p,\infty} $ is not a subspace of $L^{\infty}(\dot{B}^{\frac{n}{p}-1,q}_{p,\infty})$.

In view of the difficulty of Lorentz index, we apply a series of new technique when consturcting the parametric flow space $ {^{m'}_{m} \dot{B}}^{\frac{n}{p}-1,q}_{p,\infty} $ and estimating the norm of the corresponding operators.
First of all, we take an upper bound of time on the ring.
This provides strong support for the exchange order of our estimates of some indicators.
Secondly, at the frequency level, wavelet coefficients are grouped
according to the magnitude of quantity $1+|k'-2^{j'-j}k|$.
Further, we introduce Hardy-Littlewood maximum operator to control the relative estimation.
Thirdly, we consider the two cases $q\leq p$ and $q>p$ via different skills
and we use $\alpha$-triangle inequality and H\"older inequality interchangeably. Etc.
From this, we obtain three independent Theorems \ref{th:B-to-Y}, \ref{para} and \ref{cou}
about some properties of Besov-Lorentz spaces,
and obtain some continuity results about the Gauss flow, the para-product flow, and the coupled flow on Besov-Lorentz spaces.
According to these three continuity results,
we obtain the boundedness of bilinear operator $B(u,v)$ in Section \ref{sec6} and prove Theorem \ref{mthmain}.

\par The rest of the paper is arranged as follows:
In Section \ref{sec2}, we will introduce some preliminaries:
Meyer wavelets, Besov-Lorentz spaces and some estimations of maximum operators.
In Section \ref{sec3}, we introduce time-frequency microlocal space
and prove that such spaces are image spaces of Gauss mappings of Besov-Lorentz spaces.
In Section \ref{sec4}, we will prove the boundedness of paraproduct flow.
In the section \ref{sec5}, we will prove the boundedness of couple flow.
Finally, in the last Section \ref{sec6},
we prove the boundedness of bilinear operator $B(u,v)$
and  establish the global wellposedness for small initial data in our Besov-Lorentz spaces.

%%%%%%%%%%%%
%%%%%%%%%%%%
%%%%%%%%%%%%%
%%%%%%%%%%%%
%%%%%%%%%%%%%
%%%%%%%%%%%%%%

%%%%%%%%%%%%%%%%%%%%%% section 2 %%%%%%%%%%%%%%%%%%%%%%%%%%%%%%%%%%

%%%%%%%%%%%%%%%%%%%%%%%%%%%%%%%%%%%%%%%%%%%%%%%%%%%%%%%%%%%%%%%%%%%%%
\section{Preliminaries}\label{sec2}

In this section, we introduce some preliminary knowledge
relative to wavelets and some basic inequalities.

%%%%%%%%%%%%%%
%%%%%%%%%%%%%%%%%%%%%
%%%%%%%%%%%%%%%%%
\subsection{Meyer wavelets and Besov-Lorentz spaces}
The function space we are discussing is described on the basis of wavelets,
see \cite{li29}, \cite{Me}  and \cite{Yang1}.
We introduce Meyer wavelets first.
For the convenience of the subsequent treatment,
we set $E_n=\{0,1\}^n\backslash\{0\}$ ,$\Gamma_n=\{(\epsilon,k): \epsilon\in E_n,k\in\mathbb{Z}^n\}$
and $\Lambda_n=\{(\epsilon,j,k): \epsilon\in E_n, j\in \mathbb{Z}, k\in\mathbb{Z}^n\}$.
\par Let $\phi^0(\xi)$ be a even function in $C^{\infty}_0([-\frac{4\pi}3,\frac{4\pi}3])$
satisfying that $\phi^0(\xi)=1$ when $\lvert \xi\rvert\leq\frac{2\pi}3$ and
$0\leq\phi(\xi)\leq1$.
We set $\varphi(\xi)=[(\phi^0(\frac \xi2))^2-(\phi^0(\xi))^2]^{\frac12}$ and $\phi^1(\xi)=e^{-\frac{i\xi}2}\varphi(\xi)$. Thus we can get when $\lvert
\xi\rvert\leq\frac{2\pi}3$, $\varphi(\xi)=0$; when $\frac{2\pi}3\leq \xi\leq\frac{4\pi}3$, $\varphi^2(\xi)+\varphi^2(2\pi-\xi)=1$.
For arbitrary $\epsilon=(\epsilon_1,\epsilon_2,...,\epsilon_n)\in \{0,1\}^n$,
let $\phi^{\epsilon}(x)$ be the function satisfying
$\hat\phi^{\epsilon}(\xi)=\Pi_{i=1}^n\phi^{\epsilon_i}(\xi_i)$. For any $k\in\mathbb{Z}^n$, $j\in\mathbb{Z}$, set
$\phi^{\epsilon}_{j,k}(x)=2^{\frac{nj}2}\phi^{\epsilon}(2^jx-k)$. We will use $\{\phi^{\epsilon}_{j,k}(x)\}_{(\epsilon,j,k)\in\Lambda_n}$ to denote Meyer wavelets for the rest
of this paper.

For all $\epsilon\in \{0,1\}^{n}, j\in \mathbb{Z}, k\in \mathbb{Z}^{n}$ and distribution $f$, denote
$f^{\epsilon}_{j,k}=\langle f, \phi^{\epsilon}_{j,k}\rangle$.
Using Meyer wavelets, we can characterize $L^2(\mathbb{R}^n)$ in the following way:
\begin{lemma}\label{lem:2.1111}
The Meyer wavelets make up an orthogonal basis of $L^2(\mathbb{R}^n)$.
Further, for any function $f(x)\in L^2(\mathbb{R}^n)$,
$f(x)=\sum_{(\epsilon,j,k)\in\Lambda_n}f^{\epsilon}_{j,k}\phi^{\epsilon}_{j,k}(x)$ in the $L^2$ convergence sense.
\end{lemma}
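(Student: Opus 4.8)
\noindent\emph{Proof plan.} I would prove this by reducing to a one–dimensional multiresolution analysis (MRA) and then tensorizing.

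\emph{One dimension.} View $\phi^0,\phi^1$ on the physical side (so that their Fourier transforms are the functions of $\xi$ written above) and set $V_j:=\overline{\mathrm{span}}\{\,2^{j/2}\phi^0(2^j\cdot-k):k\in\zz\,\}\subset L^2(\rr)$. First, $\{\phi^0(\cdot-k)\}_{k\in\zz}$ is orthonormal: by Plancherel and the Poisson summation formula this is equivalent to $\sum_{k\in\zz}|\phi^0(\xi+2\pi k)|^2=1$ a.e., which, given $\supp\phi^0\subset[-\tfrac{4\pi}3,\tfrac{4\pi}3]$ and $\phi^0\equiv1$ on $[-\tfrac{2\pi}3,\tfrac{2\pi}3]$, reduces to the quadrature identity $|\phi^0(\xi)|^2+|\phi^0(2\pi-\xi)|^2=1$ on $[\tfrac{2\pi}3,\tfrac{4\pi}3]$ (equivalently, to the stated relation $\varphi^2(\xi)+\varphi^2(2\pi-\xi)=1$). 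Second, since $\phi^0(\xi/2)\equiv1$ on $\supp\phi^0$, the refinement equation $\phi^0(\xi)=m_0(\xi/2)\,\phi^0(\xi/2)$ holds with $m_0$ the $2\pi$-periodization of $\phi^0$, so $V_j\subset V_{j+1}$; and $\overline{\bigcup_jV_j}=L^2(\rr)$, $\bigcap_jV_j=\{0\}$ follow in the usual way since $\phi^0$ is continuous, bounded and $\equiv1$ near $0$. Hence $(V_j)_{j\in\zz}$ is an MRA.

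\emph{The wavelet.} Put $W_j:=V_{j+1}\ominus V_j$, so that $L^2(\rr)=\bigoplus_{j\in\zz}W_j$. From $\varphi(\xi)^2=\phi^0(\xi/2)^2-\phi^0(\xi)^2\ge0$, $\supp\varphi\subset\{\tfrac{2\pi}3\le|\xi|\le\tfrac{8\pi}3\}$, and $\varphi^2(\xi)+\varphi^2(2\pi-\xi)=1$ on $[\tfrac{2\pi}3,\tfrac{4\pi}3]$, Poisson summation again gives $\sum_{k\in\zz}|\varphi(\xi+2\pi k)|^2=1$, so $\{\phi^1(\cdot-k)\}_k$ is orthonormal. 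The phase factor $e^{-i\xi/2}$ in $\phi^1(\xi)=e^{-i\xi/2}\varphi(\xi)$ is precisely the Lemari\'e--Meyer choice that places $\phi^1(\cdot-k)\in V_1$ orthogonally to every $\phi^0(\cdot-l)$ and makes $\{\phi^1(\cdot-k)\}_k$ an orthonormal basis of $W_0$; dilating, $\{\phi^1_{j,k}\}_k$ is an ONB of $W_j$, hence $\{\phi^1_{j,k}\}_{(j,k)\in\zz^2}$ is an ONB of $L^2(\rr)$.

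\emph{Tensorization.} Since $L^2(\rn)=L^2(\rr)^{\otimes n}$, at each scale $j$ we get $V_{j+1}^{\otimes n}=(V_j\oplus W_j)^{\otimes n}=V_j^{\otimes n}\oplus\bigoplus_{\epsilon\in E_n}U_j^{\epsilon_1}\otimes\cdots\otimes U_j^{\epsilon_n}$ with $U_j^0=V_j$, $U_j^1=W_j$; the family $\{\phi^\epsilon_{j,k}:\epsilon\in E_n,\,k\in\zn\}$ (with $\widehat{\phi^\epsilon}(\xi)=\prod_i\widehat{\phi^{\epsilon_i}}(\xi_i)$) is exactly an orthonormal basis of that complementary summand. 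Summing over $j\in\zz$ and using $\overline{\bigcup_jV_j^{\otimes n}}=L^2(\rn)$, $\bigcap_jV_j^{\otimes n}=\{0\}$ yields $L^2(\rn)=\bigoplus_{j\in\zz}\bigoplus_{\epsilon\in E_n}U_j^{\epsilon_1}\otimes\cdots\otimes U_j^{\epsilon_n}$, so $\{\phi^\epsilon_{j,k}\}_{(\epsilon,j,k)\in\Lambda_n}$ is an orthonormal basis of $L^2(\rn)$; the claimed $L^2$-convergent expansion $f=\sum_{(\epsilon,j,k)\in\Lambda_n}f^\epsilon_{j,k}\phi^\epsilon_{j,k}$ is then just Parseval's identity for an orthonormal basis. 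I expect the real work to be the wavelet step: verifying that the single phase $e^{-i\xi/2}$ makes $\{\phi^1(\cdot-k)\}$ a basis of $W_0$, equivalently that dyadic levels $j$ and $j+1$ are mutually orthogonal (these are the only levels whose frequency supports overlap, and it is precisely this phase that cancels the cross terms); everything else is bookkeeping with Poisson summation and tensor products. Alternatively one may bypass the MRA formalism and argue directly on the Fourier side: orthonormality within one level and between levels at distance $\ge2$ is immediate from $\supp\varphi$, orthonormality of adjacent levels is the classical phase computation, and completeness follows from the telescoping identity $\sum_{j\in\zz}\varphi(2^{-j}\xi)^2=\sum_{j\in\zz}\bigl(\phi^0(2^{-j-1}\xi)^2-\phi^0(2^{-j}\xi)^2\bigr)=1$ for $\xi\neq0$ together with the within-level orthonormality.
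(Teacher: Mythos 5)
The paper itself offers no proof of this lemma: it is quoted as a classical fact about Meyer wavelets, with the argument deferred to the cited books of Meyer, Wojtaszczyk and Yang. Your MRA--plus--tensorization plan is exactly the standard textbook proof from those references, so you are supplying what the paper omits rather than diverging from it, and the overall structure is sound: orthonormality of the integer translates via the periodization identity $\sum_k|\phi^0(\xi+2\pi k)|^2=1$, its reduction on $[\frac{2\pi}3,\frac{4\pi}3]$ to the stated relation $\varphi^2(\xi)+\varphi^2(2\pi-\xi)=1$ (which is indeed a genuine hypothesis on $\phi^0$ --- the support and cutoff conditions alone do not imply it --- and which you correctly take from the paper), the refinement relation giving $V_j\subset V_{j+1}$, density and trivial intersection, the detail spaces $W_j$, and the $2^n-1$ tensor factors indexed by $\epsilon\in E_n$ at each scale, followed by Parseval.

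Two points need repair or completion. First, a small slip: $m_0$ is the $2\pi$-periodization of $\phi^0(2\cdot)$, i.e.\ $m_0(\xi)=\sum_k\phi^0(2\xi+4\pi k)$, not of $\phi^0$ itself; periodizing $\phi^0$ gives $m_0\equiv1$ on $[-\frac{2\pi}3,\frac{2\pi}3]$ and the identity $\phi^0(\xi)=m_0(\xi/2)\phi^0(\xi/2)$ would fail. Second, and more substantively, the step you yourself flag as the real work --- that the phase $e^{-i\xi/2}$ makes $\{\phi^1(\cdot-k)\}_k$ an orthonormal basis of $W_0$ (equivalently, orthogonality of adjacent scales together with completeness of the levels $0$ and $1$ inside $V_1$) --- is only asserted, so as written the plan stops short of the crux. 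Relatedly, in your alternative Fourier-side route, completeness does not follow from within-level orthonormality plus the telescoping identity $\sum_j\varphi(2^{-j}\xi)^2=1$ alone: the frame computation also produces cross terms $t_q(\xi)=\sum_{j\geq0}\phi^1(2^{j}\xi)\overline{\phi^1(2^{j}(\xi+2\pi q))}$ for odd $q$, which for the Meyer wavelet survive only at $j=0,1$, $q=\pm1$, and must be shown to vanish --- the same phase computation again --- unless you invoke the known theorem that an orthonormal wavelet system satisfying the Calder\'on condition is automatically complete. None of this changes the verdict that your route is the standard and correct one; it just means the decisive computation still has to be written out, or, as the paper does, cited.
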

We need to use several orthogonal projection operators to consider the product of functions.
For $j\in\mathbb{Z}$ and $\epsilon\in \{0,1\}^{n}\backslash \{0\}$, let
$$\begin{array}{rcl}
P_{j}f(x)= \sum\limits_{k\in \mathbb{Z}^{n}} f^{0}_{j,k}
\phi^{0}_{j,k}(x),&& \\
Q_{j}^{\epsilon}f(x)= \sum\limits_{k \in \mathbb{Z}^n
} f^{\epsilon}_{j,k} \phi^{\epsilon}_{j,k}(x)
&\text{ and }& Q_{j}f(x)= \sum\limits_{\epsilon \in
E_n} Q^{\epsilon}_{j} f(x). \end{array}$$

Lorentz spaces $L^{p,r}$ are the generalization of Lebesgue spaces $L^{p}$.
Barraza \cite{Ba} and Coifman-Meyer \cite{MeCo} have established
the global wellposedness for initial value in homogeous
Lorentz spaces $L^{n,\infty}$ by identifying such function with Lebesgue function on the unit sphere.
Besov-Lorentz spaces $\dot{B}^{s,q}_{p,r}=l^{s,q}(L^{p,r})$, introduced in Peetre \cite{P},
can be regarded as the generalization of Besov spaces.
We dominate the measure of the set of large value points by the norm in this kind of spaces.
Further, some of Besov-Lorentz space can be treated as the interpolation space of Besov space and the space of bounded variation in the sense of Wiener, resp., see \cite{P}.
Triebel \cite{T} and Yang-Cheng-Peng \cite{YCP} introduced Triebel-Lizorkin-Lorentz spaces $F^{s,q}_{p,r}$,
which can be regarded as the generalization of Triebel-Lizorkin spaces and Lorentz spaces.
Hobus-Saal \cite{HS} have established local wellposedness for initial value in $F^{s,q}_{p,r}$.
Let $\chi(x)$ be the characterization function on the unit interval $[0,1]^{n}$ and denote
$f_{j}(x) = 2^{\frac{n}{2}j} \sum\limits_{(\epsilon,k)\in \Gamma_n}|f^{\epsilon}_{j,k}|\chi(2^{j}x-k).$  The forthcoming result is well-known. See \cite{Me, YCP}:
\begin{lemma}\label{le:2.2}

{\rm(i)} Given $s\in \mathbb{R}$ and $1\leq p,q\leq \infty$.
$f(x)\in \dot{B}^{s,q}_{p} = l^{s,q}(L^{p})\Longleftrightarrow$
$$\begin{array}{c}
\sum\limits_{j}  2^{jq(s+\frac{n}{2}-\frac{n}{p})}  \left( \sum\limits_{(\epsilon, k)\in\Gamma_n}\vert f^{\epsilon}_{j,k}\vert^{p}\right)
^{\frac{q}{p}}<\infty.
\end{array}$$

{\rm(ii)} Given $s\in \mathbb{R}$ and $1\leq p,q,r\leq \infty$.
$f(x)\in \dot{B}^{s,q}_{p,r} = l^{s,q}(L^{p,r})\Longleftrightarrow$
$$\begin{array}{c}
\sum\limits_{j}  2^{jqs}  \{ \sum\limits_{u\in \mathbb{Z}} 2^{u r} |\{x: f_{j}(x)>2^{u}\}|^{\frac{r}{p}}\}^{\frac{q}{r}}<\infty.
\end{array}$$
%Specially, for $r=\infty$, $f(x)\in \dot{B}^{s,q}_{p,\infty} = l^{s,q}(L^{p,\infty})\Longleftrightarrow$
%$$\begin{array}{c}
%\sum\limits_{j}  2^{jqs}  \{ \sup_{\lambda>0} \lambda |\{x: f_{j}(x)>\lambda\}|^{\frac{1}{p}}\}^{q}<\infty.
%\end{array}$$
%For $q=r=\infty$, $f(x)\in \dot{B}^{s,\infty}_{p,\infty} = l^{s,\infty}(L^{p,\infty})\Longleftrightarrow$
%$$\begin{array}{c}
%\sup_{j}  2^{js} ( \sup_{\lambda>0} \lambda |\{x: f_{j}(x)>\lambda\}|^{\frac{1}{p}})<\infty.
%\end{array}$$
{\rm(iii)} Given $s\in \mathbb{R}$, $1<p,q, r< \infty$.
$f(x)\in \dot{F}^{s,q}_{p,r} = L^{p,r} (l^{s,q})\Longleftrightarrow$
$$\begin{array}{c}
\sum\limits_{u}  2^{ur}  | \{x: \sum\limits_{j} 2^{jsq}|f_{j}(x)|^ q >2^{qu}\}|^{\frac{r}{p}}<\infty.
\end{array}$$
\end{lemma}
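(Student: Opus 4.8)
The plan is to deduce all three equivalences from the classical Littlewood--Paley descriptions of $\dot B^{s,q}_p$, $\dot B^{s,q}_{p,r}$ and $\dot F^{s,q}_{p,r}$, reducing everything to the scale-by-scale replacement of a Littlewood--Paley block by the elementary step function $f_j$ built from the Meyer coefficients. First I would record the scale localization of the Meyer system: since $\widehat{\phi^{\epsilon}}(\xi)=\prod_{i=1}^{n}\phi^{\epsilon_i}(\xi_i)$ with $\phi^1=e^{-i\xi/2}\varphi$ supported away from the origin, for every $\epsilon\in E_n$ the function $\widehat{\phi^{\epsilon}_{j,k}}$ is supported in a fixed dyadic annulus $\{c_1 2^{j}\le|\xi|\le c_2 2^{j}\}$, so $Q_j f$ is band-limited to that annulus. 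Hence $Q_j f=\sum_{|j'-j|\le N}\Delta_{j'}Q_j f$ and $\Delta_j f=\sum_{|j'-j|\le N}Q_{j'}\Delta_j f$ for a fixed $N$, where $\Delta_{j'}$ is a standard dyadic band-limiting operator; consequently $\sum_j 2^{jqs}\|\Delta_j f\|_X^{q}\sim\sum_j 2^{jqs}\|Q_j f\|_X^{q}$ and $\big\|(\sum_j 2^{jsq}|\Delta_j f|^{q})^{1/q}\big\|_X\sim\big\|(\sum_j 2^{jsq}|Q_j f|^{q})^{1/q}\big\|_X$ for $X\in\{L^{p},L^{p,r}\}$. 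It therefore suffices to pass between $Q_j f$ and $f_j$.

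Next comes the single-scale bump estimate. For fixed $j$ and $\epsilon$ the functions $\{\phi^{\epsilon}_{j,k}\}_{k}$ are $L^2$-normalized Schwartz bumps essentially concentrated on the cubes $2^{-j}(k+[0,1]^{n})$, which tile $\rn$; an overlapping-bump / almost-orthogonality argument using only the rapid decay of $\phi^{\epsilon}$ (together with $f^{\epsilon}_{j,k}=\langle Q^{\epsilon}_j f,\phi^{\epsilon}_{j,k}\rangle$ for the lower bound) gives, for $1\le p\le\infty$,
$$\|Q_j f\|_{L^{p}}\sim 2^{nj/2}\Big\|\sum_{(\epsilon,k)\in\Gamma_n}|f^{\epsilon}_{j,k}|\,\chi(2^{j}\cdot-k)\Big\|_{L^{p}}=\|f_j\|_{L^{p}},$$
and since $f_j$ is constant on disjoint cubes of volume $2^{-nj}$,
$$\|f_j\|_{L^{p}}=2^{nj/2}\Big(2^{-nj}\sum_{(\epsilon,k)\in\Gamma_n}|f^{\epsilon}_{j,k}|^{p}\Big)^{1/p}=2^{nj(\frac12-\frac1p)}\Big(\sum_{(\epsilon,k)\in\Gamma_n}|f^{\epsilon}_{j,k}|^{p}\Big)^{1/p}.$$
Combining with the previous paragraph and raising to the power $q$ gives $\|f\|_{\dot B^{s,q}_{p}}^{q}\sim\sum_j 2^{jq(s+\frac n2-\frac np)}\big(\sum_{(\epsilon,k)}|f^{\epsilon}_{j,k}|^{p}\big)^{q/p}$, which is (i). The pointwise inequalities behind the first display moreover refine to $f_j(x)\ls M(Q_j f)(x)$ and $|Q_j f(x)|\ls M(f_j)(x)$ with $M$ the Hardy--Littlewood maximal operator; since $M$ is bounded on $L^{p,r}$ for $1<p<\infty$, $1\le r\le\infty$, and acts boundedly on $L^{p,r}(\ell^{q})$ in the Fefferman--Stein vector-valued sense for $1<p,q<\infty$, $1<r<\infty$ (by real interpolation from the Lebesgue case), we get $\|Q_j f\|_{L^{p,r}}\sim\|f_j\|_{L^{p,r}}$ and $\big\|(\sum_j 2^{jsq}|Q_j f|^{q})^{1/q}\big\|_{L^{p,r}}\sim\big\|(\sum_j 2^{jsq}|f_j|^{q})^{1/q}\big\|_{L^{p,r}}$ in those ranges.

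Then I would discretize the Lorentz functional: for measurable $g$ and $1\le p<\infty$, $1\le r<\infty$,
$$\|g\|_{L^{p,r}}^{r}=\frac rp\int_0^{\infty}\big(\lambda^{p}\,|\{|g|>\lambda\}|\big)^{r/p}\,\frac{d\lambda}{\lambda}\sim\sum_{u\in\zz}2^{ur}\,\big|\{x:|g(x)|>2^{u}\}\big|^{r/p},$$
the equivalence coming from splitting the integral over the rings $2^{u}\le\lambda<2^{u+1}$ and using monotonicity of the distribution function (with the evident supremum version for $r=\infty$). Applying this with $g=f_j$ and inserting into $\|f\|_{\dot B^{s,q}_{p,r}}^{q}=\sum_j 2^{jqs}\|f_j\|_{L^{p,r}}^{q}$ yields (ii). For (iii) set $g=(\sum_j 2^{jsq}|f_j|^{q})^{1/q}$, so $\|f\|_{\dot F^{s,q}_{p,r}}\sim\|g\|_{L^{p,r}}$ by the previous two paragraphs; since $\{|g|>2^{u}\}=\{|g|^{q}>2^{qu}\}=\{\sum_j 2^{jsq}|f_j|^{q}>2^{qu}\}$, the discretization identity produces exactly $\sum_u 2^{ur}\big|\{x:\sum_j 2^{jsq}|f_j(x)|^{q}>2^{qu}\}\big|^{r/p}$, which is (iii).

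The only step that is not pure bookkeeping is the maximal-function control on the Lorentz scale used above — namely boundedness of $M$, and of its vector-valued analogue, on $L^{p,r}$ for $1<p<\infty$ and $1<r\le\infty$ (resp. $1<r<\infty$) — and this is precisely what pins down the index ranges asserted in (ii) and (iii). The annular Fourier support of $\widehat{\phi^{\epsilon}}$, the single-scale bump estimate, and the dyadic discretization of $\|\cdot\|_{L^{p,r}}$ are all routine; indeed the statement is recorded in \cite{Me} and \cite{YCP}, from which it may simply be quoted.
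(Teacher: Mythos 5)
The paper does not actually prove this lemma: it is stated as well known and quoted directly from \cite{Me} and \cite{YCP}, so there is no internal argument to compare yours against. Your sketch is essentially the classical proof behind those references — annular Fourier support of the Meyer blocks so that $Q_j$ and the Littlewood--Paley pieces $\Delta_j$ are interchangeable up to finitely many neighbouring scales, the single-scale identification of $\|Q_jf\|$ with $\|f_j\|$ via rapid decay of the bumps, and the dyadic discretization of the Lorentz functional — and parts (i) and (iii) are handled correctly in the stated ranges (for (iii) the vector-valued maximal inequality on $L^{p,r}(\ell^q)$ by real interpolation is indeed what one needs, and $1<p,q,r<\infty$ matches the hypothesis). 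Note only that your display $\|f_j\|_{L^p}=2^{nj(\frac12-\frac1p)}(\sum_{(\epsilon,k)}|f^\epsilon_{j,k}|^p)^{1/p}$ is an equivalence rather than an equality, since the $2^n-1$ values of $\epsilon$ share the same cube.

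There is, however, a genuine mismatch in (ii). The lemma asserts the equivalence for $1\le p,q,r\le\infty$, but your passage $\|Q_jf\|_{L^{p,r}}\sim\|f_j\|_{L^{p,r}}$ is routed through the Hardy--Littlewood maximal operator, whose boundedness on $L^{p,r}$ (the paper's Lemma \ref{le:2.7}, from \cite{max}) requires $1<p<\infty$; at $p=1$ (and at the $L^{\infty,r}$ endpoint) this step fails, so as written your argument proves (ii) on a strictly smaller range than claimed, and your closing remark that the maximal-function requirements ``pin down the index ranges asserted in (ii)'' is not accurate, because (ii) includes $p=1$. The repair is to avoid $M$ at a fixed scale: on each cube $Q_{j,k}$ one has $|Q_jf(x)|\lesssim\sum_{k'}(1+|k-k'|)^{-N}$ times the value of $f_j$ on $Q_{j,k'}$ (and conversely $|f^{\epsilon}_{j,k}|\lesssim 2^{-nj/2}\sum_{k'}(1+|k-k'|)^{-N}\sup_{Q_{j,k'}}|Q_jf|$), i.e.\ a discrete convolution with an arbitrarily fast decaying kernel, and the Lorentz quasinorm of such a superposition is controlled using the quasi-triangle ($\alpha$-triangle) inequality with the geometric weights absorbing the quasinorm constants; this works for all $1\le p<\infty$, $1\le r\le\infty$ without invoking $M$. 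Alternatively, since the statement is anyway only quoted in the paper, citing \cite{Me} and \cite{YCP}, as you do in your last sentence, is an acceptable resolution.
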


%%%%%%%%%%%%%%%%%%%%%%%%%%%%%%%%%%%%%

\par
Considering that they are closely linked to scale of the set of large value points
and Besov-Lorentz can improve the interpolation of the operator,
the study of the properties on Besov-Lorentz spaces is a matter of general interest.
In this paper, we study the continuity of some operators in Section \ref{sec3}, \ref{sec4} and \ref{sec5}.
Further, we apply these properties to establish the global wellposedness for initial data in Besov-Lorentz spaces.

%\par According to the properties of Besov-Lorentz space, for $r>p$, we can decompose $\dot{B}^{\frac np-1,q}_{p,r}$ into $\dot{B}^{\frac np-1,q}_{p,p}$ and $\dot{B}^{\frac
%np-1,q}_{p,\infty}$. Between these two spaces, $\dot{B}^{\frac np-1,q}_{p,p}$ can be transformed into Besov space $\dot{B}^{s,q}_{p}$. Therefore, with theorems
%\ref{th:B-to-Y}, %\ref{para} and \ref{cou}. we only need to additionally consider the structure of space $\dot{B}^{s,q}_{p}$ to get the boundedness of linear and nonlinear
%flows in Besov-Lorentz %space $\dot{B}^{\frac np-1,q}_{p,r}$ ($r\geq p$).

%%%%%%%%%%%%%
%%%%%%%%%%%
%%%%%%%%%%
%%%%%%%%%%%%%%%%%%%%%%%%%%%%%%%%%%%%%%%
%%%%%%%%%%%%%%%%%%%%%%%%%%%%%%%%
\subsection{Basic inequalities}

The following two inequalities can be proved easily:
\begin{lemma}\label{le:2.3}
If $a\geq 1$, then
$$1+\vert x\vert \leq 2(1+\vert x-y\vert)(1+a\vert y \vert).$$

\end{lemma}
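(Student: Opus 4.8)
The plan is to reduce the whole statement to the ordinary triangle inequality together with the nonnegativity of an obvious cross term. First I would write $x = (x-y) + y$, so that $|x| \le |x-y| + |y|$ and hence $1 + |x| \le 1 + |x-y| + |y|$. This reduces the claim to showing that $1 + |x-y| + |y| \le 2(1+|x-y|)(1+a|y|)$, and in fact I expect to prove the sharper bound $1 + |x-y| + |y| \le (1+|x-y|)(1+a|y|)$, which immediately implies the stated inequality since the factor $2$ is harmless.

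Next I would simply expand the product on the right: $(1+|x-y|)(1+a|y|) = 1 + a|y| + |x-y| + a|y|\,|x-y|$. Here is the one place where the hypothesis $a \ge 1$ enters: it gives $a|y| \ge |y|$, so that $1 + a|y| + |x-y| + a|y|\,|x-y| \ge 1 + |y| + |x-y|$, the extra term $a|y|\,|x-y|$ being nonnegative and therefore discardable. Chaining the two displays yields $1 + |x| \le 1 + |x-y| + |y| \le (1+|x-y|)(1+a|y|) \le 2(1+|x-y|)(1+a|y|)$, which is exactly the assertion.

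There is essentially no obstacle here: the only point requiring a moment of care is to make sure $a \ge 1$ is invoked at the correct spot, namely to pass from $a|y|$ back down to $|y|$ when comparing with the crude triangle-inequality estimate for $1+|x|$. Without $a \ge 1$ the sharp form (without the constant $2$) can fail, but with the hypothesis in force the estimate is immediate. So I would present this as a two-line computation rather than a structured argument.
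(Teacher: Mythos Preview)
Your argument is correct and in fact yields the sharper inequality $1+|x|\le (1+|x-y|)(1+a|y|)$, so the factor $2$ is superfluous in your approach.

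The paper proceeds differently: it splits into the two cases $a|y|\le |x|/2$ and $a|y|\ge |x|/2$. In the first case $|y|\le a|y|\le |x|/2$ forces $|x-y|\ge |x|/2$, so the factor $1+|x-y|$ alone already dominates $(1+|x|)/2$; in the second case the factor $1+a|y|$ does the same job. Either way one recovers $1+|x|$ after multiplying by $2$. This dichotomy is the natural ``which term carries the weight'' argument and genuinely uses the constant $2$. Your route via the triangle inequality and expansion of the product avoids the case split entirely and shows the constant is not needed; the paper's route is perhaps more in the spirit of how such Peetre-type inequalities are typically proved (identify which factor absorbs the large quantity), but yours is shorter and sharper here.
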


\begin{proof}
We distinguish two cases: (i) $a\vert y\vert\leq \frac{\vert x\vert}{2}$ and (ii) $a\vert y \vert\geq \frac{\vert x\vert}{2}$.
We get the above conclusion.
\end{proof}
\begin{lemma}\label{le:2.4}
For $0<r\leq1$ and $a_k\geq0,\;k\in N_+$, we have
$$(\sum_{k>0}a_k)^r\leq \sum_{k>0}(a_k)^r  .$$
\end{lemma}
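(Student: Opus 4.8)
The plan is to reduce the statement to the two-term inequality $(a+b)^r\le a^r+b^r$, then induct to cover finite sums, and finally pass to the limit to recover the infinite sum.

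First I would prove the two-term case for $a,b\ge 0$. If $a+b=0$ the inequality is trivial, so set $s:=a+b>0$ and $t_1:=a/s$, $t_2:=b/s$, so that $t_1,t_2\in[0,1]$ and $t_1+t_2=1$. Since $0<r\le 1$, the function $t\mapsto t^r$ satisfies $t^r\ge t$ for $t\in[0,1]$ (because $t^{r-1}\ge 1$ there). Hence $t_1^r+t_2^r\ge t_1+t_2=1$, and multiplying through by $s^r$ gives $a^r+b^r\ge (a+b)^r$. (Equivalently, this is just the subadditivity of the concave function $t\mapsto t^r$, which vanishes at $0$; I prefer the elementary normalization since it is self-contained.)

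Next, by induction on $N$ I would obtain the finite-sum version $\big(\sum_{k=1}^N a_k\big)^r\le \sum_{k=1}^N a_k^r$: apply the two-term case with $a=\sum_{k=1}^{N-1}a_k$ and $b=a_N$, and then invoke the induction hypothesis on the first term. Finally, for the infinite sum: if $\sum_{k>0}a_k^r=\infty$ the asserted inequality is vacuous, so assume $\sum_{k>0}a_k^r<\infty$. Then for every $N$ we have $\big(\sum_{k=1}^N a_k\big)^r\le \sum_{k=1}^N a_k^r\le \sum_{k>0}a_k^r<\infty$, which in particular forces $\sum_{k>0}a_k<\infty$; letting $N\to\infty$ and using continuity of $t\mapsto t^r$ on $[0,\infty)$ yields $\big(\sum_{k>0}a_k\big)^r\le \sum_{k>0}a_k^r$.

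There is essentially no obstacle here: the only point requiring any care is the passage to the limit for the infinite series, which is handled by the monotone convergence of partial sums combined with the continuity of $t\mapsto t^r$. The crux of the argument is the elementary observation $t^r\ge t$ on $[0,1]$ for $r\le 1$, and everything else is bookkeeping.
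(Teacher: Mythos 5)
Your proof is correct, but it follows a different route from the paper's. You establish the two-term subadditivity $(a+b)^r\le a^r+b^r$ by normalizing and using $t^r\ge t$ on $[0,1]$, then induct to finite sums and pass to the limit (correctly noting that finiteness of $\sum a_k^r$ forces $\sum a_k<\infty$, so the limit step is legitimate). The paper instead argues in one line on the ``dual'' side: setting $b_k=a_k^r$, it writes $\sum_k a_k=\sum_k b_k^{1/r}\le \Vert b\Vert_\infty^{\frac1r-1}\sum_k b_k\le\bigl(\sum_k b_k\bigr)^{1/r}$, using $b_k^{1/r}=b_k\,b_k^{\frac1r-1}\le b_k\Vert b\Vert_\infty^{\frac1r-1}$ and $\Vert b\Vert_\infty\le\sum_k b_k$; raising to the power $r$ gives the claim directly for the infinite series, with no induction or limiting argument (this is just the embedding $\ell^1\hookrightarrow\ell^{1/r}$). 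What your approach buys is a completely elementary, self-contained argument whose key fact is $t^r\ge t$ on $[0,1]$; what the paper's buys is brevity and the fact that it treats the infinite sum in one stroke, including the degenerate case $\sum_k b_k=\infty$ without any case analysis. Incidentally, your normalization trick applied directly to $s=\sum_{k>0}a_k$ (when finite) would also give the infinite-sum statement in one step, bypassing the induction and limit.
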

\begin{proof}
For any $k>0$, let $b_k=(a_k)^r$. Then $$\sum_{k>0}a_k=\sum_{k>0}(b_k)^{\frac 1r}\leq\Vert b\Vert_{\infty}^{\frac 1r-1}\sum_{k>0}b_k\leq(\sum_{k>0}b_k)^{\frac
1r}=[\sum_{k>0}(a_k)^r]^{\frac 1r}  .$$
\end{proof}

Further, by the above Lemma \ref{le:2.3}, we have:
\begin{lemma}\label{le:2.5}
For $ j\geq j'$, we have
$$\begin{array}{rl}
&(1+\vert 2^{j'}x-k'\vert)^{-n-1} (1+ \vert 2^{j}x-k\vert)^{-N-n-1} \\
\lesssim &   (1+ \vert 2^{j'-j}k-k'\vert)^{-n-1} (1+ \vert 2^{j} x-k \vert)^{-N}.
\end{array}$$
\end{lemma}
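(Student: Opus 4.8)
The plan is to reduce the two-factor product to a single power of $(1+|2^{j'-j}k-k'|)$ times a single decaying factor in the variable $2^j x - k$, by splitting into the regime where $x$ is close to the lattice point $2^{-j}k$ and the regime where it is far. First I would apply Lemma \ref{le:2.3} with $a=1$: since $j\geq j'$, we have $2^{j'}x-k' = 2^{j'-j}(2^j x - k) + (2^{j'-j}k - k')$, so writing $X = 2^{j'-j}(2^j x-k)$ and $Y = 2^{j'-j}k-k'$ gives $1+|2^{j'}x-k'| = 1+|X+Y|$. To produce the lattice-displacement factor $(1+|2^{j'-j}k-k'|)^{-n-1}$ I would instead use the triangle-type inequality in the form $1+|2^{j'-j}k-k'| \leq (1+|2^{j'}x-k'|)(1+|2^{j'-j}(2^jx-k)|) \lesssim (1+|2^{j'}x-k'|)(1+|2^jx-k|)$, where the last step uses $2^{j'-j}\leq 1$. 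Hence
$$(1+|2^{j'-j}k-k'|)^{-n-1} \lesssim (1+|2^{j'}x-k'|)^{-(n+1)}\cdot 0^{-1}$$
is the wrong direction; so instead I would keep the product and argue by cases on the size of $|2^j x - k|$.

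Concretely, I would distinguish: (i) $|2^{j'-j}(2^j x - k)| \leq \frac12|2^{j'-j}k-k'|$ and (ii) $|2^{j'-j}(2^jx - k)| > \frac12 |2^{j'-j}k - k'|$. In case (i) the triangle inequality gives $|2^{j'}x - k'| \geq \frac12|2^{j'-j}k-k'|$, so $(1+|2^{j'}x-k'|)^{-n-1} \lesssim (1+|2^{j'-j}k-k'|)^{-n-1}$, and we simply discard $n+1$ of the $N+n+1$ powers of $(1+|2^jx-k|)^{-1}$, keeping $(1+|2^jx-k|)^{-N}$. In case (ii) we have $|2^{j'-j}k-k'| \leq 2|2^{j'-j}(2^jx-k)| \leq 2|2^jx-k|$ since $2^{j'-j}\leq 1$, hence $(1+|2^jx-k|)^{-n-1}\lesssim(1+|2^{j'-j}k-k'|)^{-n-1}$, and we bound the remaining factor $(1+|2^{j'}x-k'|)^{-n-1}(1+|2^jx-k|)^{-N}\leq (1+|2^jx-k|)^{-N}$ by simply dropping the first factor (it is $\leq 1$). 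In both cases the claimed bound follows.

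The step I expect to require the most care is bookkeeping the exponents: one must check that $N+n+1$ powers are enough to simultaneously spend $n+1$ powers on generating the lattice factor in case (ii) while retaining $N$ powers, and that in case (i) the factor $(1+|2^{j'}x-k'|)^{-n-1}$ genuinely controls $(1+|2^{j'-j}k-k'|)^{-n-1}$ — which it does, as shown above. No analytic subtlety arises; the whole argument is the elementary case split underlying Lemma \ref{le:2.3}, now applied with the roles of the ``close'' and ``far'' points chosen so that the surviving decay is in $2^jx-k$. Thus the lemma is an immediate corollary of Lemma \ref{le:2.3} together with the inequality $2^{j'-j}\leq 1$.
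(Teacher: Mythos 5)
Your final case-split argument is correct and is essentially the paper's own route: the paper proves this as a direct consequence of Lemma \ref{le:2.3} applied with $a=2^{j-j'}\geq 1$, and after dividing the claim by $(1+|2^jx-k|)^{-N}$ and taking $(n+1)$-th roots it reduces to exactly $1+|2^{j'-j}k-k'|\lesssim(1+|2^{j'}x-k'|)(1+|2^jx-k|)$, which your two cases reproduce. One small correction to your intermediate discussion: the inequality you derived there, $1+|2^{j'-j}k-k'|\lesssim(1+|2^{j'}x-k'|)(1+|2^jx-k|)$, is \emph{not} ``the wrong direction'' --- it is precisely the reduced form of the lemma, and raising it to the $(n+1)$-th power and then multiplying by $(1+|2^jx-k|)^{-N}$ gives the claim immediately, so the fallback to the explicit case split, while valid, was unnecessary.
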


In this paper, we have to use some estimations relative to Hardy-Littlewood maximum operator $M$.
For $\{f^{\epsilon}_{j,k}\}_{(\epsilon,j,k)\in \Lambda_n}$,
let $f_{j}(x) = 2^{\frac{n}{2}j} \sum\limits_{(\epsilon,k)\in \Gamma}|f^{\epsilon}_{j,k}|\chi(2^{j}x-k)$ and let
\begin{equation*}
\begin{split}
g^k_{j,j'}=\left\{
\begin{array}{ll}
\sum_{(\epsilon',k')\in\Gamma}\frac{2^{\frac{n}{2}j'}|f^{\epsilon'}_{j',k'}|}{(1+|k'-2^{j'-j}k|)^N},&j\geq j',k\in\mathbb{Z}^n;\\
\sum_{(\epsilon',k')\in\Gamma}\frac{2^{\frac{n}{2}j'}|f^{\epsilon'}_{j',k'}|}{(1+|k-2^{j-j'}k'|)^N},&j<j',k\in\mathbb{Z}^n.
\end{array}
\right.
\end{split}
\end{equation*}

Yang \cite{Yang1} has proved the following Lemma:
\begin{lemma}\label{le:2.6}
For $N>2n+1$ and $x\in Q_{j,k}$, we have
\begin{equation*}
\begin{split}
g^k_{j,j'}\lesssim\left\{
\begin{array}{ll}
M(f_{j'})(x),&j\geq j'\\2^{n(j'-j)}M(f_{j'})(x),&j<j'
\end{array}
\right.
\end{split}
\end{equation*}
\end{lemma}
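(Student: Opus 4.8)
The plan is to dominate the weighted wavelet sum $g^{k}_{j,j'}$ pointwise by a single normalized integral of $f_{j'}$ against a radially decreasing kernel, and then to invoke the elementary fact that such an integral is controlled by $M(f_{j'})$. Throughout, write $Q_{j',k'}=2^{-j'}(k'+[0,1]^{n})$, so that, directly from the definition of $f_{j'}$, this function is constant on $Q_{j',k'}$ with value $V_{k'}:=2^{\frac{n}{2}j'}\sum_{\epsilon'}|f^{\epsilon'}_{j',k'}|$. In this notation $g^{k}_{j,j'}=\sum_{k'}(1+|k'-2^{j'-j}k|)^{-N}V_{k'}$ when $j\ge j'$, and $g^{k}_{j,j'}=\sum_{k'}(1+|k-2^{j-j'}k'|)^{-N}V_{k'}$ when $j<j'$.

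First I would carry out the geometric comparison. Fix $x\in Q_{j,k}$, so $|2^{j}x-k|\le\sqrt n$, and let $y\in Q_{j',k'}$, so $|2^{j'}y-k'|\le\sqrt n$. When $j\ge j'$, the triangle inequality (this is where Lemma~\ref{le:2.3} can be invoked if one wants to stay inside the paper's toolbox; it also absorbs the fact that $2^{j'-j}k$ need not be a lattice vector) gives $2^{j'}|x-y|\le|k'-2^{j'-j}k|+2\sqrt n$, hence $1+2^{j'}|x-y|\lesssim 1+|k'-2^{j'-j}k|$; when $j<j'$ the same computation, using $|2^{j}y-2^{j-j'}k'|\le 2^{j-j'}\sqrt n\le\sqrt n$, gives $1+2^{j}|x-y|\lesssim 1+|k-2^{j-j'}k'|$. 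Substituting these into the weight, replacing $V_{k'}$ by $f_{j'}(y)$ for $y\in Q_{j',k'}$, integrating over $Q_{j',k'}$ (which costs a factor $2^{-nj'}$ that we restore), and summing the disjoint cubes back up to $\rr^{n}$, I obtain
\[
g^{k}_{j,j'}\ \lesssim\ 2^{nj'}\int_{\rr^{n}}\frac{f_{j'}(y)}{(1+\sigma|x-y|)^{N}}\,dy,\qquad \sigma=\begin{cases}2^{j'},& j\ge j',\\[2pt] 2^{j},& j<j'.\end{cases}
\]

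The final step is the classical estimate: for $N>n$ and any $\sigma>0$, splitting $\rr^{n}$ into the ball $\{|x-y|<\sigma^{-1}\}$ and the dyadic shells $\{2^{\ell}\sigma^{-1}\le|x-y|<2^{\ell+1}\sigma^{-1}\}$, $\ell\ge0$, and bounding the integral over each piece by (a constant times) $|B(x,2^{\ell+1}\sigma^{-1})|\,M(f_{j'})(x)$, one finds $\sigma^{n}\int_{\rr^{n}}(1+\sigma|x-y|)^{-N}f_{j'}(y)\,dy\lesssim M(f_{j'})(x)\sum_{\ell\ge0}2^{-\ell(N-n)}\lesssim M(f_{j'})(x)$. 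For $j\ge j'$ this is applied with $\sigma=2^{j'}$ and yields $g^{k}_{j,j'}\lesssim M(f_{j'})(x)$ directly. For $j<j'$ it is applied with $\sigma=2^{j}$, and since the prefactor in the displayed inequality is $2^{nj'}=2^{n(j'-j)}\cdot 2^{nj}$, one picks up exactly the claimed loss, $g^{k}_{j,j'}\lesssim 2^{n(j'-j)}M(f_{j'})(x)$. The hypothesis $N>2n+1$ is more than enough to make the geometric series in $\ell$ converge; in fact only $N>n$ is needed for this argument.

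The one step requiring a little care — and which I would single out as the main, though still modest, obstacle — is the geometric comparison: one must keep track of the additive $O(\sqrt n)$ errors produced by replacing $x$ and $y$ by their cube labels, and of the non-integrality of $2^{j'-j}k$, which is precisely what Lemma~\ref{le:2.3} (or a bare triangle inequality) is designed to handle. After that, everything reduces to the standard maximal-function domination of a normalized decaying kernel.
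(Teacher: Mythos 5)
Your proof is correct. The paper's own argument is little more than a citation to Yang's book, with a one-line hint that for each pair $(j,k)$, $(j',k')$ one should consider the least dyadic cube containing both $Q_{j,k}$ and $Q_{j',k'}$ and feed the resulting nested averages into the maximal function --- a discrete ``common dyadic ancestor'' decomposition. You instead replace the lattice sum by an integral of $f_{j'}$ against the normalized decaying kernel $\sigma^{n}(1+\sigma|x-y|)^{-N}$ (using that $f_{j'}$ is constant on each $Q_{j',k'}$) and then apply the classical shell estimate that dominates such a convolution by $M(f_{j'})(x)$. These are two implementations of the same distance-scale idea, but yours is self-contained and avoids tracking which $k'$ share a given dyadic ancestor; the geometric comparison step, with the $O(\sqrt n)$ slack absorbed by the triangle inequality, is handled cleanly and the choice of $\sigma=2^{j'}$ versus $\sigma=2^{j}$ produces exactly the factor $2^{n(j'-j)}$ in the $j<j'$ case. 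Your remark that $N>n$ already suffices for this particular lemma is accurate; the stated hypothesis $N>2n+1$ is harmless extra slack that the paper carries for its other wavelet-kernel estimates.
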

\begin{proof}
In fact, we only need to consider the cases where $j\geq j'$ and $j<j'$ respectively. For each given $(j,k)$ and $(j',k')$, we should considering the least dyadic cube
containing $Q_{j,k}$ and $Q_{j',k'}$ for maximum operators. We then obtain the lemma after direct calculation. See \cite{Yang1}, Lemma 3.2, Chapter 5 for details.
\end{proof}

We group the wavelets coefficients according to the the quantity of $1+|k'-2^{j'-j}k|$.
Then get the estimation of spatial variables by the attenuation of the $1+|k'-2^{j'-j}k|$.
We use the maximum operator to dominate the Lorentz index in this manner.

Chung-Hunt-Kurtz \cite{max}
considered the boundedness of Hardy-Littlewood maximum operator $M$
on the Lorentz spaces $L^{p,\infty}$
and the forthcoming lemma can be found in Theorem 3 of Chung-Hunt-Kurtz \cite{max}:
\begin{lemma}\label{le:2.7}
For all $1<p<\infty$, we have
\begin{equation*}
\sup_{\lambda>0}\lambda^p|\{x:Mf(x)>\lambda\}|\lesssim \sup_{\lambda>0}\lambda^p|\{x:f(x)>\lambda\}|  .
\end{equation*}
\end{lemma}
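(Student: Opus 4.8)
The plan is to recognize that the asserted inequality is exactly the boundedness of the Hardy--Littlewood maximal operator $M$ on the weak Lebesgue space $L^{p,\infty}(\rn)$ for $1<p<\infty$ (here $f\ge 0$, as in all the applications), and to deduce it from the classical weak type $(1,1)$ bound for $M$ by truncating the input at each level. Set $A:=\sup_{\mu>0}\mu^p|\{x:f(x)>\mu\}|$ and assume $A<\infty$, there being nothing to prove otherwise. Fixing $\lambda>0$, I would split $f=f^1_\lambda+f^2_\lambda$ with $f^1_\lambda=f\chi_{\{f>\lambda/2\}}$ and $f^2_\lambda=f\chi_{\{f\le\lambda/2\}}$. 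Since $0\le f^2_\lambda\le\lambda/2$ pointwise, we get $Mf^2_\lambda\le\lambda/2$, whence $Mf\le Mf^1_\lambda+\lambda/2$ and therefore
\begin{equation*}
\{x:Mf(x)>\lambda\}\subset\{x:Mf^1_\lambda(x)>\lambda/2\}.
\end{equation*}

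Next I would invoke the Hardy--Littlewood weak type $(1,1)$ inequality (a consequence of the Vitali covering lemma), namely $|\{x:Mg(x)>\alpha\}|\lesssim\alpha^{-1}\|g\|_{L^1}$ for $g\in L^1(\rn)$, applied to $g=f^1_\lambda$ and $\alpha=\lambda/2$; this is legitimate because $f^1_\lambda\in L^1(\rn)$ for each fixed $\lambda$ by the computation that follows. This gives $|\{x:Mf(x)>\lambda\}|\lesssim\lambda^{-1}\int_{\{f>\lambda/2\}}f\,dx$. To bound the truncated integral I would use only the distributional estimate $|\{f>\mu\}|\le A\mu^{-p}$ together with the layer--cake formula:
\begin{equation*}
\int_{\{f>\lambda/2\}}f\,dx=\frac\lambda2\,|\{f>\lambda/2\}|+\int_{\lambda/2}^{\infty}|\{f>\mu\}|\,d\mu\le\frac{p}{p-1}\,A\Big(\frac\lambda2\Big)^{1-p},
\end{equation*}
where the tail integral converges precisely because $p>1$; this is the one and only place the hypothesis is used, and the estimate genuinely fails at $p=1$. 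Combining the last two displays yields $|\{x:Mf(x)>\lambda\}|\lesssim_p A\,\lambda^{-p}$, i.e. $\lambda^p|\{x:Mf(x)>\lambda\}|\lesssim_p A$, and taking the supremum over $\lambda>0$ finishes the argument.

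There is no real obstacle here: the only non-elementary input is the weak type $(1,1)$ bound for $M$, and the content of the proof is the standard observation that truncating $f$ at height $\lambda/2$ trades the (false) strong $L^1$ control for the available weak-$L^p$ control, while the bounded tail $f^2_\lambda$ contributes nothing to the maximal function above the level $\lambda$. Alternatively, one may simply cite Theorem~3 of \cite{max}, but the truncation argument sketched above is short and self-contained.
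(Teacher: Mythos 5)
Your proof is correct, and it is worth noting that the paper itself offers no proof of this lemma at all: it simply cites Theorem~3 of Chung--Hunt--Kurtz \cite{max} (which treats the weighted $L(p,q)$ setting, of which this is the unweighted special case). Your argument is the standard self-contained route: the truncation $f=f\chi_{\{f>\lambda/2\}}+f\chi_{\{f\le\lambda/2\}}$, the trivial bound $Mf^2_\lambda\le\lambda/2$ giving $\{Mf>\lambda\}\subset\{Mf^1_\lambda>\lambda/2\}$, the weak type $(1,1)$ inequality for $M$, and the layer--cake estimate
$\int_{\{f>\lambda/2\}}f\,dx\le \tfrac{p}{p-1}A(\lambda/2)^{1-p}$, which indeed requires $p>1$ and yields $\lambda^p|\{Mf>\lambda\}|\lesssim_p A$; all steps check out, including the integrability of $f^1_\lambda$ needed to invoke the weak $(1,1)$ bound. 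This is essentially the restricted weak-type/Marcinkiewicz interpolation argument specialized to $L^{p,\infty}$, so what your write-up buys over the paper's treatment is self-containedness (only the Vitali-covering weak $(1,1)$ bound is imported), at the cost of covering only the unweighted case actually used here. One small point to make explicit: the statement is written for $f\ge 0$ (as in all of the paper's applications, where $f$ is a sum of absolute values of wavelet coefficients); for general $f$ one should read $|f|$ throughout, which changes nothing in your argument.
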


%%%%%%%%%%%%%%%%%%%%%%%%%%%%%%%%%%%%%%%%%%%%%%%%%%%%%%
%%%%%%%%%%%%%%%%%%%%%%%Section 3%%%%%%%%%%%%%%%%%%%%%%%%%%%%
%%%%%%%%%%%%%%%%%%%%%%%%%%%%%%%%%%%%%%%%%%%%%%%%%%%%%%

\section{Microlocal maximum norm space and Gauss flow}\label{sec3}
An initial value space $X$ is called critical for the
Navier-Stokes equations (\ref{eq:dirichlet}), if $\Vert u_{\theta}(\cdot)\Vert_X=\Vert u(\cdot)\Vert_X$.
Since Navier-Stokes equations have scale invariance, critical space is of special importance.
If $u(t,x)$ is a solution of (\ref{eq:dirichlet}) with $p(t,x)$ as pressure and $f(x)$ as initial value,
then for $\theta>0$, we replace $u(t,x)$, $p(t,x)$ and $f(x)$ with $u_{\theta}(t,x) = \theta u(\theta^{2}t, \theta x)$, $p_{\theta}(t,x) = \theta^{2} p(\theta^{2}t, \theta x)$ and
$f_{\theta}(x)=\theta\cdot f(\theta x)$ respectively and the Equations (\ref{eq:dirichlet}) still hold.
For the wellposedness of Navier-Stokes equations,
many scholars have studied it in different initial value spaces before.
Kato-Fujita \cite{KF} consider first the mild solution for $H^{s}$.
Kato \cite{Kat} consider the global wellposedness of Navier-Stokes equations for initial value in $\dot{L}^n(\mathbb{R}^n)$.
Li-Xiao-Yang \cite{LXY} and Wu \cite{W2} considered the existence of solutions on some critical Besov spaces and critical Besov-Morrey spaces.
Koch-Tataru \cite{KT} and Miura \cite{yang18} consider the biggest initial data space $\textrm{BMO}^{-1}$.
In this paper, we consider critical Besov-Lorentz spaces.

\par In this section, we introduce the single norm parameter space ${ ^{m'}_{m} \dot{B}}^{\frac{n}{p}-1, q}_{p,\infty}$.
In the sequel, we will establish the relationship between
Besov-Lorentz space
$\dot{B}^{\frac{n}{p}-1,q}_{p,\infty}$
and the parameter space ${ ^{m'}_{m} \dot{B}}^{\frac{n}{p}-1, q}_{p,\infty}$.
That is, we prove Theorem \ref{th:B-to-Y}.

% In this section, we will establish the relation between critical Besov-Lorentz spaces $\dot{B}^{\frac{n}{p}-1,q}_{p,\infty}$
%and the completely discrete maximum estimation single norm spaces ${ ^{m'}_{m} \dot{B}}^{\frac{n}{p}-1, q}_{p,\infty}$
%via Gauss flow.
%We introduce the completely discrete spaces ${ ^{m'}_{m} \dot{B}}^{\frac{n}{p}-1, q}_{p,\infty}$ in subsection
%\ref{sec3.1} and we consider some properties of these spaces in subsection \ref{sec3.2}.
%Finally, in subsection \ref{sec3.3}, we prove Theorem \ref{th:B-to-Y}, namely the boundedness of Gauss flow in the
%Besov-Lorentz spaces $\dot{B}^{\frac{n}{p}-1,q}_{p,\infty}$.

\subsection{Microlocal maximum norm space}\label{sec3.1}

\par For any $f\in\dot{B}^{\frac{n}{p}-1,q}_{p,\infty}$ and $f(t,x)=e^{t\Delta}f$,
for $(\epsilon,j,k)\in\Lambda_n$,
denote  $f^{\epsilon}_{j,k}(t)=\langle f(t,\cdot),\phi^{\epsilon}_{j,k}\rangle$ and
$f^{\epsilon}_{j,k}=\langle f,\phi^{\epsilon}_{j,k}\rangle$.
Hence
$$f=\sum_{(\epsilon,j,k)\in\Lambda_n}f^{\epsilon}_{j,k}\phi^{\epsilon}_{j,k} {\mbox \, and \,} f(t,x)=\sum_{(\epsilon,j,k)\in\Lambda_n}f^{\epsilon}_{j,k}(t)\phi^{\epsilon}_{j,k}(x).$$
According to the properties of the Fourier transform of Meyer wavelet, we have
$$f^{\epsilon}_{j,k}(t)=\sum_{\epsilon^{\prime},\lvert
j-j^{\prime}\rvert\leq1,k^{\prime}}f^{\epsilon^{\prime}}_{j^{\prime},k^{\prime}}\\\langle
e^{t\Delta}\phi^{\epsilon^{\prime}}_{j^{\prime},k^{\prime}},\phi^{\epsilon}_{j,k}\rangle.$$

At the Beijing Conference on Harmonic Analysis and Its Applications in 2023,
Professor Zhifei Zhang specifically mentioned how to improve the iteration space of
Cannone-Meyer-Planchon \cite{CMP}.
In this paper, we introduce microlocal maximum norm space as the iteration space,
which is a special class of nonlinear decaying spaces with a single norm.
For convenience, we introduce the following notations.
For any $j, j_t\in \mathbb{Z}$, denote
\begin{align*}
f_{j}(t,x) = &2^{\frac{n}{2}j} \sum\limits_{(\epsilon,k)\in \Gamma}|f^{\epsilon}_{j,k}(t)|\chi(2^{j}x-k);\\
f_{j,j_t}(x)=&\sup_{2^{-2j_t}\leq t<2^{2-2j_t}}f_{j}(t,x).
\end{align*}
For all $\lambda>0$, $j\in \mathbb{Z},m\in\mathbb{R}$,  denote

$$\begin{array}{c}
A^{p, m}_{j,j_t} = 2^{2(j-j_t)m} 2^{j(\frac np-1)}
\sup_{\lambda>0} \lambda |\{x: f_{j,j_t}( x)>\lambda\}|^{\frac{1}{p}}<\infty\\

\end{array}$$
and for $q<\infty$, $A^{p,m}=\sup_{j_t\in \mathbb{Z}}\sum_{j\geq j_t}(A^{p,m}_{j,j_t})^q$, $A^{p,m'}=\sup_{j_t\in \mathbb{Z}}\sum_{j< j_t}(A^{p,m'}_{j,j_t})^q$; for
$q=\infty$, $A^{p,m}=\sup_{j_t\in \mathbb{Z},j\geq j_t}A^{p,m}_{j,j_t}$, $A^{p,m'}=\sup_{j_t\in \mathbb{Z},j< j_t}A^{p,m'}_{j,j_t}$.
%%%%%%%%%%%%%%%ccccccccccc
In this paper,
we introduce a new type flow space ${ ^{m'}_{m} \dot{B}}^{\frac{n}{p}-1, q}_{p,\infty}$
with nonlinear single norm attenuation.

\begin{definition}\label{de:3.2} Given $1\leq p<\infty, 1\leq q\leq \infty, m'\geq 0, m>0.$
$f(t,x)\in { ^{m'}_{m} \dot{B}}^{\frac{n}{p}-1, q}_{p, \infty}$ if and only if
\begin{align*}
\begin{split}
\left\{\begin{array}{ll}
\sup_{j_t\in\mathbb{Z}}(\sum\limits_{j\geq j_t} \{A^{p, m}_{j,j_t}\}^{q}
+ \sum\limits_{j< j_t} \{A^{p, m'}_{j,j_t}\}^{q})<\infty ,&1\leq q<\infty;\\
\sup_{j_t\in\mathbb{Z},j\geq j_t}A^{p, m}_{j,j_t}
+ \sup_{j_t\in\mathbb{Z},j< j_t}A^{p, m'}_{j,j_t}<\infty ,&q=\infty.
\end{array}\right.
\end{split}
\end{align*}
\end{definition}

Different to which in Yang-Yang-Hon \cite{YYH},
our ${ ^{m'}_{m} \dot{B}}^{\frac{n}{p}-1, q}_{p,\infty}$
is a function space of maximum norm over different binary rings for both time and frequency.
For $1\leq q<\infty$,
we can't control the Lorentz index directly in the semi-discrete flow Lorentz space.
But we found that if we take the maximum norm over the time-dependent ring,
we can control the Lorentz index with appropriate techniques.

\subsection{Properties of microlocal space}\label{sec3.2}

%%%%%%%%%%%%%%

Define parameterized Besov space $B_{m,m'}$ as follows:
$$(t2^{2j})^{m} 2^{(\frac{n}{2}-1)j} |f^{\epsilon}_{j,k}(t)|<\infty, \forall t2^{2j}\geq 1.$$
$$(t2^{2j})^{m'} 2^{(\frac{n}{2}-1)j} |f^{\epsilon}_{j,k}(t)|<\infty, \forall 0<t2^{2j}\leq 1.$$
By the definition of ${ ^{m'}_{m} \dot{B}}^{\frac{n}{p}-1, q}_{p,\infty}$,
we have:
\begin{lemma}
Given $1\leq p<\infty, 1\leq q\leq \infty, m'\geq 0, m>0,$
$${ ^{m'}_{m} \dot{B}}^{\frac{n}{p}-1, q}_{p,\infty}\subset B_{m,m'}.$$
\end{lemma}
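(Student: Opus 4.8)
The plan is to unwind both definitions and show that membership in ${ ^{m'}_{m} \dot{B}}^{\frac{n}{p}-1, q}_{p,\infty}$ forces, for every single wavelet coefficient $f^{\epsilon}_{j,k}(t)$, the pointwise bounds that define $B_{m,m'}$. First I would fix $(\epsilon,j,k)\in\Lambda_n$ and a time $t>0$, and choose the unique $j_t\in\mathbb{Z}$ with $2^{-2j_t}\leq t<2^{2-2j_t}$; note that $t2^{2j}\geq 1$ corresponds (up to the harmless factor $4$) to $j\geq j_t$ and $t2^{2j}\leq 1$ to $j< j_t$, so the two regimes in the definition of $B_{m,m'}$ match exactly the two sums appearing in Definition \ref{de:3.2}.

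Next I would extract the single coefficient from the maximum-norm quantity. By construction $f_{j}(t,x) = 2^{\frac{n}{2}j} \sum_{(\epsilon,k)\in \Gamma}|f^{\epsilon}_{j,k}(t)|\chi(2^{j}x-k)$, and since the $\chi(2^{j}x-k)$ have disjoint supports of measure $2^{-nj}$, for a fixed $(\epsilon,k)$ and all $x$ in the cube $Q_{j,k}=\{x:\chi(2^jx-k)=1\}$ we have $f_{j,j_t}(x)\geq f_j(t,x)\geq 2^{\frac{n}{2}j}|f^{\epsilon}_{j,k}(t)|$ on a set of measure $2^{-nj}$. Hence
$$\sup_{\lambda>0}\lambda\,|\{x: f_{j,j_t}(x)>\lambda\}|^{\frac1p}\;\gtrsim\;2^{\frac{n}{2}j}|f^{\epsilon}_{j,k}(t)|\cdot 2^{-\frac{nj}{p}},$$
so that, multiplying by $2^{j(\frac np-1)}$, the quantity $A^{p,m}_{j,j_t}$ (resp.\ $A^{p,m'}_{j,j_t}$) dominates $2^{2(j-j_t)m}2^{(\frac n2-1)j}|f^{\epsilon}_{j,k}(t)|$ (resp.\ with $m'$). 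Finiteness of the relevant sum in Definition \ref{de:3.2} — or the supremum when $q=\infty$ — then bounds each summand, so $\{A^{p,m}_{j,j_t}\}^q\lesssim \|f\|^q$ gives $A^{p,m}_{j,j_t}\lesssim\|f\|$, and likewise for $m'$. Finally I replace $2^{2(j-j_t)m}$ by $(t2^{2j})^m$ using $t\asymp 2^{-2j_t}$, absorbing the constant $4^m$; this yields exactly $(t2^{2j})^m 2^{(\frac n2-1)j}|f^{\epsilon}_{j,k}(t)|\lesssim\|f\|_{{ ^{m'}_{m} \dot{B}}^{\frac{n}{p}-1, q}_{p,\infty}}$ for $t2^{2j}\geq 1$ and the analogous bound with $m'$ for $t2^{2j}\leq 1$, which is the defining condition of $B_{m,m'}$.

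There is no serious obstacle here; the statement is essentially a bookkeeping lemma. The only point requiring a little care is the passage from the Lorentz-type quasinorm $\sup_\lambda \lambda|\{f_{j,j_t}>\lambda\}|^{1/p}$ back to an individual coefficient, which is handled by the disjoint-support observation above (a single bump $\chi(2^jx-k)$ already produces a level set of the right size), and the matching of the dyadic time ring $2^{-2j_t}\leq t<2^{2-2j_t}$ with the conditions $t2^{2j}\gtrless 1$, where one must keep track of the universal factor $4$ and note it only changes implicit constants, not the exponents $m,m'$.
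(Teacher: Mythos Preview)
Your proposal is correct and follows essentially the same approach as the paper: both arguments extract a single wavelet coefficient from the weak-$L^p$ quantity $\sup_{\lambda>0}\lambda|\{x:f_{j,j_t}(x)>\lambda\}|^{1/p}$ by observing that one dyadic cube $Q_{j,k}$ of measure $2^{-nj}$ already lies in the level set at height $\lambda\approx 2^{nj/2}|f^{\epsilon}_{j,k}(t)|$. The paper phrases this via a threshold $\lambda_{j,j_t}$ (the largest coefficient value at scale $j$) while you go directly through the single-cube lower bound, but the content is identical and your handling of the dyadic time ring $t\asymp 2^{-2j_t}$ and the factor-of-$4$ bookkeeping is fine.
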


\begin{proof}
For $E\subset \mathbb{Z}^{n}$, denote $\# E$ the number of elements in $E$.
For any $j_t\in\mathbb{Z}$ and $j\geq j_t$, we have
$$\sum_{j\geq j_t}[2^{2(j-j_t)m}2^{j(\frac np-1)}\sup_{\lambda>0}\lambda\cdot 2^{-\frac{nj}p}\#\{k:\sup_{2^{-2j_t}\leq t<2^{2-2j_t}}\sum_{\epsilon\in
E_n}|f^{\epsilon}_{j,k}(t)|>\lambda\cdot2^{-\frac n2 j}\}^{\frac 1p}]^q<\infty  .$$

Hence$$\sup_{\lambda>0}2^{2(j-j_t)m}2^{-j}\cdot\lambda\#\{k:\sup_{2^{-2j_t}\leq t<2^{2-2j_t}}\sum_{\epsilon\in E_n}|f^{\epsilon}_{j,k}(t)|>\lambda\cdot2^{-\frac n2 j}\}^{\frac
1p}<\infty  .$$
For $q=\infty$ we can directly get the above estimate.
Let $\lambda_{j,j_t}>0$ and it satisfies that
$$\#\{k:\sup_{2^{-2j_t}\leq t<2^{2-2j_t}}\sum\limits_{\epsilon\in E_n}|f^{\epsilon}_{j,k}(t)|>\lambda_{j,j_t}\cdot2^{-\frac n2 j}\}>0$$ and
$$\#\{k:\sup_{2^{-2j_t}\leq t<2^{2-2j_t}}\sum\limits_{\epsilon\in E_n}|f^{\epsilon}_{j,k}(t)|>2\lambda_{j,j_t}\cdot2^{-\frac n2 j}\}=0.$$ We have
$2^{2m(j-j_t)}2^{-j}\cdot\lambda_{j,j_t}<\infty$ and for any $k\in\mathbb{Z}^n$, $\sup_{\epsilon\in E_n,2^{-2j_t}\leq
t<2^{2-2j_t}}|f^{\epsilon}_{j,k}(t)|\leq2\lambda_{j,j_t}\cdot2^{-\frac n2 j}$. According to the arbitrariness of $j_t$, we obtain
$$(t2^{2j})^{m}2^{-j}\cdot2^{\frac n2 j}\sup\limits_{\epsilon\in E_n}|f^{\epsilon}_{j,k}(t)|<\infty, \forall t2^{2j}\geq1.$$
And that means $(t2^{2j})^m2^{j(\frac n2-1)}|f^{\epsilon}_{j,k}(t)|<\infty$.
The same is true for $j<j_t$.

\end{proof}

By definition of ${ ^{m'}_{m} \dot{B}}^{\frac{n}{p}-1, q}_{p,\infty}$ and $B_{m,m'}$,
we can easily get  the following estimation.

\begin{lemma}
Given $1\leq p < \infty, 1\leq q\leq \infty, 0\leq m'<\frac 12, m>1.$
$f\in { ^{m'}_{m} \dot{B}}^{\frac{n}{p}-1, q}_{p, \infty}$ implies
$$\sup\limits_{k\in \mathbb{Z}^n } 2^{\frac{nj}{2}} \left\vert f^{0}_{j,k}(t)\right\vert
\lesssim  t^{-\frac{1}{2}},\; for \;all\; t2^{2j}\geq 1.$$
$$\sup\limits_{k\in \mathbb{Z}^n } 2^{\frac{nj}{2}} \left\vert f^{0}_{j,k}(t)\right\vert
\lesssim 2^{j(1-2m')} t^{-m'},\; for \;all\; 0<t2^{2j}\leq 1.$$
\end{lemma}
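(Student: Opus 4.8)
The plan is to unwind the definitions and reduce the claimed pointwise bounds on $2^{nj/2}|f^0_{j,k}(t)|$ to the finiteness of the single norm defining ${ ^{m'}_{m} \dot{B}}^{\frac{n}{p}-1, q}_{p,\infty}$, exactly as in the preceding lemma on $B_{m,m'}$. First I would fix $t>0$ and $j\in\zz$, and choose $j_t\in\zz$ so that $2^{-2j_t}\leq t<2^{2-2j_t}$; then the hypothesis $t2^{2j}\geq 1$ is equivalent to $j\geq j_t$ (up to an absolute shift which I will absorb), and $0<t2^{2j}\leq 1$ is equivalent to $j<j_t$. Since $f\in { ^{m'}_{m} \dot{B}}^{\frac{n}{p}-1, q}_{p,\infty}$, each term $A^{p,m}_{j,j_t}$ with $j\geq j_t$ (resp.\ $A^{p,m'}_{j,j_t}$ with $j<j_t$) is bounded by a fixed constant $C$ depending only on the norm of $f$: this is immediate from the $q=\infty$ definition, and for $1\leq q<\infty$ it follows because a single nonnegative summand is dominated by the full sum.

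Next I would translate $A^{p,m}_{j,j_t}\leq C$ into a pointwise bound. Recall
$$A^{p, m}_{j,j_t} = 2^{2(j-j_t)m}\, 2^{j(\frac np-1)}\sup_{\lambda>0}\lambda\,|\{x: f_{j,j_t}(x)>\lambda\}|^{1/p},$$
where $f_{j,j_t}(x)=\sup_{2^{-2j_t}\leq t<2^{2-2j_t}}f_j(t,x)$ and $f_j(t,x)=2^{nj/2}\sum_{(\epsilon,k)}|f^\epsilon_{j,k}(t)|\chi(2^jx-k)$. The key elementary observation — the same device used in the $B_{m,m'}$ lemma — is that if $\sup_{\epsilon,\,t}\sum_\epsilon|f^\epsilon_{j,k}(t)|$ exceeds $2^{-nj/2}\mu$ for some $k$, then the set $\{x: f_{j,j_t}(x)>\mu\}$ contains the unit cube $Q_{j,k}$ (a set of measure $2^{-nj}$), so its $L^{p,\infty}$-type quasinorm forces $\mu\,2^{-nj/p}\lesssim A^{p,m}_{j,j_t}2^{-2(j-j_t)m}2^{-j(\frac np-1)}\lesssim C\,2^{-2(j-j_t)m}2^{-j(\frac np-1)}$. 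Choosing $\mu$ just below the supremum of $2^{nj/2}\sum_\epsilon|f^\epsilon_{j,k}(t)|$ over $k$ and over $t$ in the ring, and simplifying $2^{-nj/p}$ against $2^{-j(\frac np-1)}=2^{-nj/p}2^j$, gives $2^{nj/2}\sup_k\sum_\epsilon|f^\epsilon_{j,k}(t)|\lesssim 2^{2(j-j_t)(-m)}\cdot 2^{j}\cdot C$... wait — I would be careful here and instead carry the powers of $2^{j_t}$ as powers of $t$ via $2^{2j_t}\sim t^{-1}$. Since $\sum_\epsilon$ is a finite sum over $E_n$ and $|f^0_{j,k}(t)|\leq\sum_\epsilon|f^\epsilon_{j,k}(t)|$ (indeed one only needs the $\epsilon=0$ term, which is handled identically since $A^{p,m}_{j,j_t}$ controls all wavelet coefficients at scale $j$), this delivers the stated estimates.

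For the high-frequency regime $t2^{2j}\geq 1$: from $2^{2(j-j_t)m}\cdot 2^{nj/2}\sup_k|f^0_{j,k}(t)|\lesssim 2^j C$ and $2^{2j_t}\sim t^{-1}$, I get $2^{nj/2}|f^0_{j,k}(t)|\lesssim 2^{j}2^{-2m(j-j_t)}= 2^{j(1-2m)}2^{2mj_t}\sim 2^{j(1-2m)}t^{-m}$, and since $m>1$ and $t2^{2j}\geq 1$ one has $2^{j(1-2m)}\leq (t^{-1/2})^{2m-1}\cdot t^{1/2}\cdot t^{-1/2}$... more cleanly, $2^{j(1-2m)}t^{-m}=t^{-1/2}(t^{1/2}2^j)^{1-2m}\leq t^{-1/2}$ because $t^{1/2}2^j\geq 1$ and $1-2m<0$; this is exactly $\lesssim t^{-1/2}$. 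For the low-frequency regime $0<t2^{2j}\leq 1$: the analogous computation with $m'$ in place of $m$ gives $2^{nj/2}|f^0_{j,k}(t)|\lesssim 2^{j(1-2m')}t^{-m'}$ directly, with no further simplification needed (and here $0\leq m'<\tfrac12$ keeps the exponent $1-2m'>0$ sensible). I do not expect a genuine obstacle: the only delicate point is bookkeeping of the dyadic shifts relating $t$, $2^{2j}$ and $2^{2j_t}$, and making sure the "cube of measure $2^{-nj}$" argument is applied with the correct normalization of $\chi$ and of $f_j$; everything else is the finite-sum triviality $|f^0_{j,k}|\leq f_{j,j_t}$-distributional-bound already exploited in the previous lemma.
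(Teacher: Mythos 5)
There is a genuine gap, and it is conceptual. The norm of $^{m'}_m\dot B^{\frac np-1,q}_{p,\infty}$ is built from
$f_j(t,x)=2^{nj/2}\sum_{(\epsilon,k)\in\Gamma_n}|f^\epsilon_{j,k}(t)|\chi(2^jx-k)$, and in the paper's conventions $\Gamma_n=\{(\epsilon,k):\epsilon\in E_n,k\in\zn\}$ with $E_n=\{0,1\}^n\setminus\{0\}$. The index $\epsilon=0$ is excluded. Consequently $A^{p,m}_{j,j_t}$ (and the whole norm) controls only the coefficients $f^\epsilon_{j,k}(t)$ against the \emph{mother} wavelets $\phi^\epsilon_{j,k}$ with $\epsilon\neq 0$; it says nothing directly about the \emph{scaling-function} coefficients $f^0_{j,k}(t)$, which are precisely what this lemma estimates. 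Your parenthetical claim that ``$|f^0_{j,k}(t)|\leq\sum_\epsilon|f^\epsilon_{j,k}(t)|$'' with the sum over $E_n$, and that ``$A^{p,m}_{j,j_t}$ controls all wavelet coefficients at scale $j$,'' are both false: there is no pointwise inequality relating the $\epsilon=0$ coefficient at scale $j$ to the $\epsilon\neq0$ coefficients at the \emph{same} scale. What your argument actually re-derives (correctly, via the cube/measure argument) is the content of the immediately preceding lemma, namely that the $\epsilon\neq 0$ coefficients obey $(t2^{2j})^m 2^{(\frac n2-1)j}|f^\epsilon_{j,k}(t)|\lesssim 1$ for $t2^{2j}\geq 1$ and the analogous $m'$-bound for $t2^{2j}\leq 1$.

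The paper's proof instead takes that preceding lemma as input and then confronts the real difficulty: since $\phi^0_{j,k}$ is a low-pass (father) function, $f^0_{j,k}(t)=\langle f(t,\cdot),\phi^0_{j,k}\rangle$ receives contributions from all coarser scales, via
$|f^0_{j,k}(t)|\lesssim\sum_{j'<j-1}\sum_{\epsilon'\in E_n,k'}|f^{\epsilon'}_{j',k'}(t)|\,|\langle\phi^{\epsilon'}_{j',k'},\phi^0_{j,k}\rangle|$.
One then inserts the $B_{m,m'}$ bound on each $|f^{\epsilon'}_{j',k'}(t)|$, uses the wavelet decay to sum in $k'$, and finally sums the geometric series in $j'$. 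It is exactly here that the hypotheses on $m',m$ earn their keep: for $0<t2^{2j}\leq1$ the series $\sum_{j'<j}2^{(1-2m')j'}$ converges because $m'<\tfrac12$; for $t2^{2j}\geq1$ one must split at the threshold $t2^{2j'}\sim1$, and the two sub-sums converge because $1-2m'>0$ and $1-2m<0$ respectively, each contributing $\lesssim t^{-1/2}2^{-nj/2}$. This also explains why the bound in the high-frequency regime is only $t^{-1/2}$ and not the faster-decaying $2^{j(1-2m)}t^{-m}$ your computation produces: the dominant contribution to $f^0_{j,k}(t)$ comes from the transition scale $j'\sim-\tfrac12\log_2 t$, not from $j'=j$, so the decay in $j$ present for a single $\epsilon\neq 0$ coefficient is washed out. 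In short, the summation over scales is the essential step of the proof and is missing entirely from your proposal; without it the argument neither proves the lemma nor is consistent with it.
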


\begin{proof}
 Because
$\lvert f^{0}_{j,k}(t)\rvert\lesssim \sum_{j^{\prime}<j-1,\epsilon^{\prime},k^{\prime}}\lvert f^{\epsilon^{\prime}}_{j^{\prime},k^{\prime}}(t)\langle
\phi^{\epsilon^{\prime}}_{j^{\prime},k^{\prime}} ,\phi^0_{j,k}\rangle\rvert$. Furthermore we have ${^{m^{\prime}}_{m}\dot{B}}^{\frac np-1,q}_{p,\infty}\in B_{m,m^{\prime}}$,
which derives that:\par
(1). For $0<t2^{2j}\leq1$,
\begin{equation*}
\begin{split}
\lvert f^{0}_{j,k}(t)\rvert\lesssim&\sum_{j^{\prime}<j-1,\epsilon^{\prime},k^{\prime}} (t2^{2j^{\prime}})^{-m^{\prime}}2^{(1-\frac n2)j^{\prime}}\int
2^{\frac{n(j+j^{\prime})}2}\phi^{\epsilon^{\prime}}(2^{j^{\prime}}x-k^{\prime})\phi^{0}(2^jx-k)dx\\
\lesssim&2^{\frac{nj}2}t^{-m^{\prime}}\sum_{j^{\prime}<j-1}2^{(1-2m^{\prime})j^{\prime}}\sum_{\epsilon^{\prime},k^{\prime}}\int
(1+\lvert2^{j^{\prime}}x-k^{\prime}\rvert)^{-N}(1+\lvert2^{j}x-k\rvert)^{-N}dx\\
\lesssim&2^{\frac{nj}2}t^{-m^{\prime}}\sum_{j^{\prime}<j-1}2^{(1-2m^{\prime})j^{\prime}}\sum_{\epsilon^{\prime},k^{\prime}}(1+\lvert2^{j^{\prime}-j}k-k^{\prime}\rvert)^{-N}\int
(1+\lvert2^{j}x-k\rvert)^{-N}dx\\
\lesssim&2^{\frac{nj}2}t^{-m^{\prime}}\sum_{j^{\prime}<j-1}2^{(1-2m^{\prime})j^{\prime}}\cdot 2^{-nj}\lesssim2^{(1-2m^{\prime}-\frac n2)j}t^{-m^{\prime}}\,.
\end{split}
\end{equation*}
(2). For $t2^{2j}\geq1$,
\begin{equation*}
\begin{split}
\lvert f^{0}_{j,k}(t)\rvert\lesssim&\sum_{j^{\prime}\leq-\frac{\log_2t}2,\epsilon^{\prime},k^{\prime}} (t2^{2j^{\prime}})^{-m^{\prime}}2^{(1-\frac n2)j^{\prime}}\int
2^{\frac{n(j+j^{\prime})}2}\phi^{\epsilon^{\prime}}(2^{j^{\prime}}x-k^{\prime})\phi^{0}(2^jx-k)dx+\\
&\sum_{-\frac{\log_2t}2\leq j^{\prime}\leq j,\epsilon^{\prime},k^{\prime}} (t2^{2j^{\prime}})^{-m}2^{(1-\frac n2)j^{\prime}}\int
2^{\frac{n(j+j^{\prime})}2}\phi^{\epsilon^{\prime}}(2^{j^{\prime}}x-k^{\prime})\phi^{0}(2^jx-k)dx\\
\lesssim &2^{(1-2m^{\prime})\cdot(-\frac{\log_2t}2)}t^{-m^{\prime}}2^{-\frac {nj}2}+2^{(1-2m)\cdot(-\frac{\log_2t}2)}2^{-\frac n2j}t^{-m}\\
\lesssim&t^{-\frac 12}2^{-\frac {nj}2}+t^{m-\frac 12}t^{-m}2^{-\frac {nj}2}\lesssim t^{-\frac 12}2^{-\frac {nj}2}\,.
\end{split}
\end{equation*}
\end{proof}

\subsection{Proof of Theorem \ref{th:B-to-Y}}\label{sec3.3}

The following lemma can be found in \cite{LXY}.
\begin{lemma}\label{le:3.1}
There exists a constant $N^{\prime}\in\mathbb{N}_+$ large enough and a small constant $\widetilde{c}>0$ that for any positive integer $N$, as long as $N>N^{\prime}$, then
\begin{equation}
\lvert f^{\epsilon}_{j,k}(t)\rvert\lesssim e^{-\widetilde{c}t2^{2j}}\sum_{\epsilon^{\prime},\lvert j-j^{\prime}\rvert\leq 1,k^{\prime}}\lvert
f^{\epsilon^{\prime}}_{j^{\prime},k^{\prime}}\rvert(1+\lvert2^{j-j^{\prime}}k^{\prime}-k\rvert)^{-N},\forall t2^{2j}\geq1
\end{equation}
and
\begin{equation}
\lvert f^{\epsilon}_{j,k}(t)\rvert\lesssim \sum_{\epsilon^{\prime},\lvert j-j^{\prime}\rvert\leq 1,k^{\prime}}\lvert
f^{\epsilon^{\prime}}_{j^{\prime},k^{\prime}}\rvert(1+\lvert2^{j-j^{\prime}}k^{\prime}-k\rvert)^{-N},\forall 0< t2^{2j}\leq1\,.
\end{equation}
\end{lemma}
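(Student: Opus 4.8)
The plan is to reduce everything to the matrix coefficients $\langle e^{t\Delta}\phi^{\epsilon'}_{j',k'},\phi^{\epsilon}_{j,k}\rangle$ and to exploit two facts: the almost-orthogonality of Meyer wavelets at well-separated scales (which forces $|j-j'|\le 1$ in the sum for $f^\epsilon_{j,k}(t)$, as already recorded after Definition~\ref{de:3.2}), and the rapid spatial decay of both $\phi^{\epsilon'}_{j',k'}$ and the heat-modified bump $e^{t\Delta}\phi^{\epsilon'}_{j',k'}$. First I would write, using the expansion $f^\epsilon_{j,k}(t)=\sum_{\epsilon',|j-j'|\le 1,k'} f^{\epsilon'}_{j',k'}\langle e^{t\Delta}\phi^{\epsilon'}_{j',k'},\phi^\epsilon_{j,k}\rangle$, and observe that since $|j-j'|\le 1$ it suffices to prove, for each fixed such pair of scales,
\begin{equation*}
\bigl|\langle e^{t\Delta}\phi^{\epsilon'}_{j',k'},\phi^\epsilon_{j,k}\rangle\bigr|\lesssim K(t2^{2j})\,(1+|2^{j-j'}k'-k|)^{-N},
\end{equation*}
where $K(\tau)=e^{-\widetilde c\,\tau}$ when $\tau\ge 1$ and $K(\tau)\lesssim 1$ when $0<\tau\le 1$; summing the geometric-in-$j'$ (only three values) contributions then yields the stated bounds.

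The core estimate on the matrix coefficient I would obtain on the Fourier side. Because $\widehat{\phi^\epsilon}$ is supported in an annulus of the dyadic scale $2^j$ (and $\widehat{\phi^{\epsilon'}}$ in the comparable annulus at scale $2^{j'}$ with $|j-j'|\le 1$), the multiplier $e^{-t|\xi|^2}$ is evaluated where $|\xi|\sim 2^j$, so on the support it is bounded by $e^{-ct2^{2j}}$ together with all its derivatives controlled by $(t2^{2j})^\ell 2^{-j\ell}$-type factors; for $t2^{2j}\le 1$ this just gives a harmless constant. Transferring back to physical space: $\langle e^{t\Delta}\phi^{\epsilon'}_{j',k'},\phi^\epsilon_{j,k}\rangle=\int \Psi(t,\cdot)$ where $\Psi$ is (a rescaled, translated copy of) the convolution of two Schwartz functions whose Fourier transforms carry the factor $e^{-t|\xi|^2}$; standard non-stationary phase / integration-by-parts in $\xi$ gives the spatial decay $(1+2^j|x-\text{center}|)^{-N'}$ for any $N'$, with constants of size $K(t2^{2j})$. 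Then the usual discrete-translation bookkeeping (Lemma~\ref{le:2.3} and Lemma~\ref{le:2.5} are exactly the tools for converting $\int(1+|2^{j'}x-k'|)^{-N}(1+|2^jx-k|)^{-N}\,dx$ into $(1+|2^{j-j'}k'-k|)^{-N}\cdot 2^{-nj}$, the normalization matching the $L^2$-normalized wavelets) produces the claimed $(1+|2^{j-j'}k'-k|)^{-N}$.

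The main obstacle, and the reason the statement insists $N>N'$ for some fixed large $N'$, is tracking how large $N'$ must be: each integration by parts in $\xi$ produces derivatives of $e^{-t|\xi|^2}$, which in the regime $t2^{2j}\ge 1$ are \emph{good} (they come with $t^{1/2}\lesssim 2^{-j}$ and are absorbed into the exponential, since $\tau^\ell e^{-c\tau}\lesssim e^{-\widetilde c\tau}$ after shrinking the constant), but one must confirm no derivative lands in a way that breaks the uniform $e^{-\widetilde c t2^{2j}}$; and one loses a fixed number of powers of $2^j$ versus $2^{j'}$ in passing through Lemma~\ref{le:2.5}, so the output exponent is strictly smaller than the input decay exponent of the Schwartz tails of $\phi^\epsilon$. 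Since $\phi^\epsilon$ is genuinely Schwartz, its tails decay faster than any polynomial, so for any target $N$ one can start from a large enough $N'$; making this dependence explicit (essentially $N'$ larger than $N$ plus the $2n+1$ appearing already in Lemma~\ref{le:2.6}) is the only delicate point. I would also note that the second inequality (the case $0<t2^{2j}\le 1$) is strictly easier: the multiplier $e^{-t|\xi|^2}$ and all its derivatives are $O(1)$ on the relevant annulus, so it reduces to the $t=0$ almost-orthogonality estimate, which is classical for Meyer wavelets; the reference \cite{LXY} presumably carries out precisely this computation, so I would cite it for the routine parts and only highlight the exponential-gain mechanism in the $t2^{2j}\ge 1$ regime.
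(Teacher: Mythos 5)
The paper does not actually prove Lemma \ref{le:3.1}: it is quoted verbatim from \cite{LXY}, so there is no in-paper argument to compare against. Your sketch is the standard proof behind that citation and it is sound: since $\epsilon,\epsilon'\neq 0$, the Fourier transforms of $\phi^{\epsilon}_{j,k}$ and $\phi^{\epsilon'}_{j',k'}$ are supported in dyadic annuli, which both forces $|j-j'|\le 1$ in the expansion $f^{\epsilon}_{j,k}(t)=\sum_{\epsilon',|j-j'|\le 1,k'}f^{\epsilon'}_{j',k'}\langle e^{t\Delta}\phi^{\epsilon'}_{j',k'},\phi^{\epsilon}_{j,k}\rangle$ and guarantees that on the support $e^{-t|\xi|^2}$ and all its $\xi$-derivatives are $O\bigl((1+t2^{2j})^{\ell}e^{-ct2^{2j}}\bigr)\lesssim e^{-\widetilde c\,t2^{2j}}$ (and $O(1)$ when $0<t2^{2j}\le 1$), which is exactly the exponential-gain mechanism you isolate. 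Two minor simplifications you could make: integrating by parts on the Fourier side directly against the phase $e^{i(2^{-j}k-2^{-j'}k')\cdot\xi}$ already yields the discrete decay $(1+|2^{j-j'}k'-k|)^{-N}$, so the detour through physical-space tails and Lemmas \ref{le:2.3} and \ref{le:2.5} is not needed; and the threshold $N>N'$ is pure bookkeeping — because the Meyer wavelets are Schwartz, the estimate holds for every $N$ with a constant depending on $N$, so the dependence you worry about tracking is not actually delicate.
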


 For $0<t2^{2j}< 1$, by lemma \ref{le:3.1}, we have
$$\lvert f^{\epsilon}_{j,k}(t)\rvert\lesssim\sum_{\epsilon^{\prime},\lvert j-j^{\prime}\rvert\leq1,k^{\prime}}\lvert
f^{\epsilon^{\prime}}_{j^{\prime},k^{\prime}}\rvert(1+\lvert 2^{j-j^{\prime}}k^{\prime}-k\rvert)^{-N}.$$
Using lemma \ref{le:2.6}, finally we get
$$\lvert f_j(t,x)\rvert\lesssim\sum_{\lvert j-j^{\prime}\rvert\leq1} M(f_{j^{\prime}})(x).$$
For any $j_t\in\mathbb{Z}$, let $f_{j,j_t}(x)=\sup_{2^{-2j_t}\leq t<2^{2-2j_t}}f_j(t,x)$ as we denote in Section \ref{sec3.1}.
We obtain
\begin{equation*}
\begin{split}
\lvert \{x:f_{j,j_t}(x)>\lambda\}\rvert\leq &\lvert \{x:\sum_{\lvert j-j^{\prime}\rvert\leq1}CM\lvert f_{j^{\prime}}(x)\rvert>\lambda\}\rvert \\\leq&\sum_{\lvert
j-j^{\prime}\rvert\leq1}\lvert \{x:CM\lvert f_{j^{\prime}}(x)\rvert>\frac{\lambda}3\}\rvert\,.
\end{split}
\end{equation*}
For $1\leq q<\infty$ and $m'\geq0$, let $A^{p,m^{\prime}}=\sup_{j_t\in\mathbb{Z}}\sum_{j<j_t}2^{2(j-j_t)m'q}2^{jq(\frac
np-1)}\sup_{\lambda>0}\lambda^q\lvert\{x:f_{j,j_t}(x)>\lambda\}\rvert^{\frac qp}$. Then
\begin{equation*}
\begin{split}
A^{p,m^{\prime}}\lesssim\sup_{j_t\in\mathbb{Z}}\sum_{j<j_t}2^{2(j-j_t)m'q}2^{jq(\frac np-1)}\sup_{\lambda>0}\lambda^q(\sum_{\lvert j-j^{\prime}\rvert\leq1}\lvert \{x:CM\lvert
f_{j^{\prime}}(x)\rvert>\frac{\lambda}3\}\rvert)^{\frac qp}\,.
\end{split}
\end{equation*}
For $q>p$, we apply H\"older inequality; for $q\leq p$, we consider the following $\alpha$-triangle inequality:
$$ (a+b)^{\alpha}\lesssim a^{\alpha}+b^{\alpha}  \qquad \mbox{for} \;a,b\geq 0,\;0<\alpha\leq1\,.$$
Then we have
\begin{equation*}
\begin{split}
A^{p,m^{\prime}}\lesssim&\sup_{j_t\in\mathbb{Z}}\sum_{j<j_t}2^{2(j-j_t)m'q}2^{jq(\frac np-1)}\sum_{\lvert j-j^{\prime}\rvert\leq1}\sup_{\lambda>0}\lambda^q\lvert \{x:CM\lvert
f_{j^{\prime}}(x)\rvert>\frac{\lambda}3\}\rvert^{\frac qp}\\\lesssim&\sup_{j_t\in\mathbb{Z}}\sum_{j<j_t}2^{2(j-j_t)m'q}2^{jq(\frac np-1)}\sum_{\lvert
j-j^{\prime}\rvert\leq1}\sup_{\lambda>0}\lambda^q\lvert \{x:M\lvert f_{j^{\prime}}(x)\rvert>\frac{\lambda}{3C}\}\rvert^{\frac
qp}\\\lesssim&\sup_{j_t\in\mathbb{Z}}\sum_{j<j_t}2^{2(j-j_t)m'q}2^{jq(\frac np-1)}\sum_{\lvert j-j^{\prime}\rvert\leq1}\Vert M\lvert f_{j'}\rvert\Vert^q_{L^{p,\infty}}\\
\lesssim&\sup_{j_t\in\mathbb{Z}}\sum_{j'<j_t+1}\sum_{j:\lvert j-j^{\prime}\rvert\leq1}2^{2(j'-j_t)m'q}2^{j'q(\frac np-1)}\Vert
f_{j'}\Vert^q_{L^{p,\infty}}\\\lesssim&\sup_{j'\in\mathbb{Z}}2^{j'q(\frac np-1)}\Vert  f_{j'}\Vert^q_{L^{p,\infty}}=\Vert f\Vert_{{\dot{B}}^{\frac{n}{p}-1, q}_{p,\infty}}\,.
\end{split}
\end{equation*}

For $q=\infty$ and $m'>0$, let $A^{p,m^{\prime}}=\sup_{j_t\in\mathbb{Z},j,j_t}2^{2(j-j_t)m'}2^{j(\frac np-1)}\sup_{\lambda>0}\lambda\lvert\{x:f_{j,j_t}(x)>\lambda\}\rvert^{\frac 1p}$.
Hence
\begin{equation*}
\begin{split}
A^{p,m^{\prime}}\lesssim&\sup_{j_t\in\mathbb{Z},j<j_t}2^{2(j-j_t)m'}2^{j(\frac np-1)}\sum_{\lvert j-j^{\prime}\rvert\leq1}\sup_{\lambda>0}\lambda\lvert \{x:CM\lvert
f_{j^{\prime}}(x)\rvert>\frac{\lambda}3\}\rvert^{\frac 1p}\\\lesssim&\sup_{j_t\in\mathbb{Z},j<j_t}2^{2(j-j_t)m'}2^{j(\frac np-1)}\sum_{\lvert
j-j^{\prime}\rvert\leq1}\sup_{\lambda>0}\lambda\lvert \{x:M\lvert f_{j^{\prime}}(x)\rvert>\frac{\lambda}{3C}\}\rvert^{\frac
1p}\\\lesssim&\sup_{j_t\in\mathbb{Z},j<j_t}2^{2(j-j_t)m'}2^{j(\frac np-1)}\sum_{\lvert j-j^{\prime}\rvert\leq1}\Vert M\lvert f_{j'}\rvert\Vert_{L^{p,\infty}}\\
\lesssim&\sup_{j_t\in\mathbb{Z}}\sup_{j<j_t}\sum_{\lvert j-j^{\prime}\rvert\leq1}2^{2(j'-j_t)m'}2^{j'(\frac np-1)}\Vert
f_{j'}\Vert_{L^{p,\infty}}\\\lesssim&\sup_{j_t\in\mathbb{Z}}\sum_{j'<j_t+1}\sup_{j:\lvert j-j^{\prime}\rvert\leq1}2^{2(j'-j_t)m'}2^{j'(\frac np-1)}\Vert
f_{j'}\Vert_{L^{p,\infty}}\lesssim\sup_{j'\in\mathbb{Z}}2^{j'(\frac np-1)}\Vert  f_{j'}\Vert_{L^{p,\infty}}\,.
\end{split}
\end{equation*}
For $t2^{2j}\geq1$, let
\begin{align*}
\begin{split}
A^{p,m}= \left \{
\begin{array}{ll}
\sup_{j_t\in\mathbb{Z}}\sum_{j\geq j_t}2^{2(j-j_t)mq}2^{jq(\frac np-1)}\sup_{\lambda>0}\lambda^q\lvert\{x:f_{j,j_t}(x)>\lambda\}\rvert^{\frac qp}, & 1\leq q<\infty;\\
\sup_{j_t\in\mathbb{Z},j\geq j_t}2^{2(j-j_t)m}2^{j(\frac np-1)}\sup_{\lambda>0}\lambda\lvert\{x:f_{j,j_t}(x)>\lambda\}\rvert^{\frac 1p}, & q=\infty.
\end{array}
\right.
\end{split}
\end{align*}

Similarly, we have
$A^{p,m}\lesssim\Vert f\Vert_{{\dot{B}}^{\frac{n}{p}-1, q}_{p,\infty}}$.

%%%%%%%%%%%%%%%%%%%%%%%%%%%%%%%%%%%%%%%%%%%%%%%%%%%%%%%%%%%%%%%%%%(

%%%%%%%%%%%%%%%%%%%%%% section 4 %%%%%%%%%%%%%%%%%%%%%%%%%%%%%%%%%%

%%%%%%%%%%%%%%%%%%%%%%%%%%%%%%%%%%%%%%%%%%%%%%%%%%%%%%%%%%%%%%%%%%%%%
\section{ Proof of Theorem \ref{para} }\label{sec4}
%Define
%\begin{align*}
%P_{l}(u,v)&= \int^{t}_{0} e^{(t-s)\Delta} \partial_{x_l}\left(\sum\limits_{j}  P_{j-2}u
%Q^{\epsilon}_{j}v\right) ds;\\
%P_{l,l',l''}(u,v)&= \int^{t}_{0} e^{(t-s)\Delta} \partial_{x_l}\partial_{x_{l'}}\partial_{x_{l''}}(-\Delta)^{-1} \left(\sum\limits_{j}  P_{j-2}u
%Q^{\epsilon}_{j}v\right) ds.
%\end{align*}
In this section, we aim to prove the boundedness of paraproduct flow $P_{l}(u,v)$ and $P_{l,l',l''}(u,v)$. $P_{l}(u,v)$ is more easy to prove. Thus we consider only $P_{l,l',l''}(u,v)$.
The Fourier transform of the paraproduct is on the ring.
Note that
$${\rm Supp} \widehat{P_{j-2}u} \subset \Big\{\left\vert\xi_i \right\vert\leq \frac{\pi}{3} \cdot 2^{j}, \forall i=1,\cdots, n\Big\},$$
$$ {\rm Supp} \widehat{Q^{\epsilon}_{j} u} \subset
\Big\{\vert\xi_i\vert\leq \frac{4\pi}{3} \cdot 2^{j}, \mbox{ if } \epsilon_i=0;
\frac{2\pi}{3} \cdot 2^{j}\leq
\vert\xi_i\vert \leq \frac{8\pi}{3} \cdot 2^{j}, \mbox{ if } \epsilon_i=1\Big\}.$$
Hence the support of the Fourier transform of $P_{j-2}u Q^{\epsilon}_{j} v$ is contained in a ring:
$$\Big\{\vert\xi_i\vert\leq \frac{5\pi}{3} \cdot 2^{j}, \mbox{ if } \epsilon_i=0;
\frac{\pi}{3} \cdot 2^{j}\leq
\vert\xi_i\vert\leq 3\pi \cdot 2^{j}, \mbox{ if } \epsilon_i=1\Big\}.$$
Hence
\begin{align*}
&f^{\epsilon'}_{j',k'}(t)= \left\langle P_{l,l',l''}(u,v), \phi^{\epsilon'}_{j', k'}\right\rangle\\
&= \left\langle \int^{t}_{0} e^{(t-s)\Delta} \partial_{x_l}\partial_{x_{l'}}\partial_{x_{l''}}(-\Delta)^{-1} \Big\{\!\!\! \!\!
\sum\limits_{\vert j-j'\vert\leq 3, \epsilon, k, k''}
u^{0}_{j-2,k}(s) v^{\epsilon}_{j,k''}(s) \phi^{0}_{j-2,k}(x) \phi^{\epsilon}_{j,k''}(x)\Big\} ds, \phi^{\epsilon'}_{j', k'}\right\rangle.
\end{align*}
According to \cite{YYH}, we can get the following priori estimations:
\begin{lemma}\label{le:4.2}
Given $\alpha\in \mathbb{N}^{n}, N\geq n+1, k\in \mathbb{Z}^{n}$ and $0\leq s\leq t$.
For $t2^{2j}\leq 1, \epsilon\neq 0$ and $ N\geq n+1$, we have
$$ \begin{array}{c} \left\vert e^{(t-s)\Delta}\partial^{\alpha} \phi^{\epsilon}_{j,k}\right\vert \lesssim   2^{(\frac{n}{2}+\vert\alpha\vert)j} \left(1+\vert
2^{j}x-k\vert\right)^{-N},\; \forall\; 0\leq \vert\alpha\vert\leq 2,\\
\left\vert e^{(t-s)\Delta}\partial^{\alpha} (-\Delta)^{-1} \phi^{\epsilon}_{j,k}\right\vert \lesssim   2^{(\frac{n}{2}+\vert\alpha\vert-2)j}
\left(1+\vert2^{j}x-k\vert\right)^{-N}\!\!\!,\!\!\;\forall \;3\leq \vert\alpha\vert\leq 5.
\end{array}$$

For $t2^{2j}\geq 1, \epsilon\neq 0$ and $ N\geq n+1$, we have
$$ \begin{array}{c} \left\vert e^{(t-s)\Delta}\partial^{\alpha} \phi^{\epsilon}_{j,k}\right\vert \lesssim  e^{-c(t-s) 2^{2j}} 2^{(\frac{n}{2}+\vert\alpha\vert)j} \left(1+\vert
2^{j}x-k\vert\right)^{-N},\; \forall\; 0\leq \vert\alpha\vert\leq 2,\\
\left\vert e^{(t-s)\Delta}\partial^{\alpha} (-\Delta)^{-1} \phi^{\epsilon}_{j,k}\right\vert \lesssim  e^{-c(t-s) 2^{2j}} 2^{(\frac{n}{2}+\vert\alpha\vert-2)j}
\left(1+\vert2^{j}x-k\vert\right)^{-N}\!\!\!,\!\!\;\forall \;3\leq \vert\alpha\vert\leq 5.
\end{array}$$
\end{lemma}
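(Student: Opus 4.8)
\textbf{Proof proposal for Lemma \ref{le:4.2}.}
The plan is to reduce both families of estimates to explicit kernel bounds for the operators $e^{(t-s)\Delta}\partial^\alpha$ and $e^{(t-s)\Delta}\partial^\alpha(-\Delta)^{-1}$ acting on a single wavelet $\phi^\epsilon_{j,k}$, exploiting the crucial facts that $\epsilon\neq 0$ (so $\widehat{\phi^\epsilon_{j,k}}$ is supported in the dyadic annulus $|\xi|\sim 2^j$, away from the origin) and that $\phi^\epsilon$ is a fixed Schwartz function. First I would record the scaling reduction: since $\phi^\epsilon_{j,k}(x)=2^{nj/2}\phi^\epsilon(2^jx-k)$, a change of variables turns the estimate at scale $j$ into the estimate at scale $0$ with $t-s$ replaced by $(t-s)2^{2j}$; thus it suffices to prove, with $\tau:=(t-s)2^{2j}$, a bound of the form $|e^{\tau\Delta}\partial^\alpha\phi^\epsilon(y)|\lesssim C(\tau)(1+|y|)^{-N}$ where $C(\tau)=1$ if $\tau\le 1$ and $C(\tau)=e^{-c\tau}$ if $\tau\ge 1$, and the powers of $2^j$ in the statement are exactly the homogeneity factors produced by $\partial^\alpha$ (and by $(-\Delta)^{-1}$, which contributes $2^{-2j}$).

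The core is then a stationary-phase / integration-by-parts estimate on the convolution kernel. Writing $e^{\tau\Delta}\partial^\alpha\phi^\epsilon = K_\tau * \phi^\epsilon$ with $\widehat{K_\tau}(\xi)=(i\xi)^\alpha e^{-\tau|\xi|^2}$, and using that $\widehat{\phi^\epsilon}$ is $C_0^\infty$ supported in an annulus $\{a\le|\xi|\le b\}$ with $a>0$, the function $\xi\mapsto (i\xi)^\alpha e^{-\tau|\xi|^2}\widehat{\phi^\epsilon}(\xi)$ is smooth, compactly supported in that fixed annulus, and all of its $\xi$-derivatives are bounded by $C_{N}(1+\tau)^{N}$-type quantities that, more importantly, carry the factor $e^{-a^2\tau}$ coming from $e^{-\tau|\xi|^2}$ on the support. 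Integrating by parts $N$ times in the inverse Fourier integral gives $|e^{\tau\Delta}\partial^\alpha\phi^\epsilon(y)|\lesssim (1+|y|)^{-N}\sup_{|\beta|\le N}\|\partial^\beta_\xi[(i\xi)^\alpha e^{-\tau|\xi|^2}\widehat{\phi^\epsilon}]\|_{L^1}$. For $\tau\le 1$ the $e^{-\tau|\xi|^2}$ factor and its derivatives are uniformly bounded on the annulus, giving the clean bound $(1+|y|)^{-N}$; for $\tau\ge 1$ one extracts $e^{-c\tau}$ (with, say, $c=a^2/2$) and absorbs the remaining polynomial-in-$\tau$ loss into $e^{-(a^2/2)\tau}$. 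The case with $(-\Delta)^{-1}$ is identical once one notes that $|\xi|^{-2}$ is a smooth multiplier on the annulus (again thanks to $\epsilon\neq 0$), contributing only the homogeneity factor $2^{-2j}$ and a harmless constant; the constraint $3\le|\alpha|\le 5$ is not essential to the argument beyond matching the exponents in the statement, where it arises from the third-order derivatives $\partial_{x_l}\partial_{x_{l'}}\partial_{x_{l''}}$ in $A_{l,l',l''}$.

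The main obstacle — really the only delicate point — is bookkeeping the $\tau$-dependence of the $\xi$-derivatives: $\partial^\beta_\xi e^{-\tau|\xi|^2}$ produces factors of $\tau^{|\beta|}$ (times polynomials in $\xi$, which are bounded on the fixed annulus), so one must check that $\tau^{N}e^{-a^2\tau}$ is dominated by $e^{-c\tau}$ uniformly for $\tau\ge 1$, which holds for any $0<c<a^2$. Once this elementary inequality is in hand, the two regimes $t2^{2j}\le 1$ and $t2^{2j}\ge 1$ follow by simply specializing $\tau=(t-s)2^{2j}\in[0,t2^{2j}]$ and using $t-s\le t$. I would finally remark that this is exactly the estimate recorded in \cite{YYH}, so in the write-up I would cite it and only indicate the scaling reduction and the annulus support property as the two points that make it work.
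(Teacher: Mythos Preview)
Your proposal is correct, and in fact the paper does not prove this lemma at all: it simply introduces it with ``According to \cite{YYH}, we can get the following priori estimations'' and states the bounds. Your Fourier-side argument (scaling reduction, annular support of $\widehat{\phi^\epsilon}$ for $\epsilon\neq 0$, integration by parts $N$ times, and absorbing the polynomial loss $\tau^{N}$ into $e^{-c\tau}$) is the standard and correct way to obtain these estimates, and your closing remark that one should just cite \cite{YYH} matches exactly what the paper does.
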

By Lemma \ref{le:4.2} , $\phi^{0}_{j-2,k},\phi^{\epsilon}_{j,k''}\in S(\mathbb{R}^n)$ and lemma \ref{le:2.5}, for $t2^{2j^{\prime}}\geq 1$,
\begin{align*}
\left\vert f^{\epsilon'}_{j',k'}(t)\right\vert \lesssim &  2^{\frac{nj'}{2}+j'} \int ^{t}_{0}
\sum\limits_{ \vert j-j'\vert\leq 3, \epsilon, k, k''}
\left\vert u^{0}_{j-2,k}(s)\right\vert \left\vert v^{\epsilon}_{j,k''}(s)\right\vert
e^{-c(t-s)2^{2j'} } ds \\
& \times (1+\vert 4k-k''\vert)^{-N} (1+\vert 2^{j-j'}k'-k''\vert)^{-N}\,;
\end{align*}
for $t2^{2j^{\prime}}< 1$,
\begin{align*}
\left\vert f^{\epsilon'}_{j',k'}(t)\right\vert \lesssim &  2^{\frac{nj'}{2}+j'} \int ^{t}_{0}
\sum\limits_{ \vert j-j'\vert\leq 3, \epsilon, k, k''}
\left\vert u^{0}_{j-2,k}(s)\right\vert \left\vert v^{\epsilon}_{j,k''}(s)\right\vert
 ds \\
& \times (1+\vert 4k-k''\vert)^{-N} (1+\vert 2^{j-j'}k'-k''\vert)^{-N}\,.
\end{align*}

Here we only discuss the case of $1\leq q<\infty$, and we can deal with the case of $q=\infty$ in the same way.
So we omit the proof for $q=\infty$.
Firstly, we consider the indices where $0<t2^{2j'}\leq 1$. We have $0<s\leq t \leq 2^{-2j'}$ and
$\sup\limits_{k\in \mathbb{Z}^n }  \vert u^{0}_{j-2,k}(s)\vert
\lesssim  2^{-\frac{nj}{2}} 2^{j(1-2m')} s^{-m'}$, from which we can deduce

\begin{align*}
\left\vert f^{\epsilon'}_{j',k'}(t)\right\vert \lesssim &  2^{2j'(1-m')} \int ^{t}_{0}
\sum\limits_{ \vert j-j'\vert\leq 3, \epsilon, k''}
\left\vert v^{\epsilon}_{j,k''}(s)\right\vert
(1+\vert 2^{j-j'}k'-k''\vert)^{-N} s^{-m'} ds
.
\end{align*}
According to Lemma \ref{le:2.6}, for any $j_s\in\mathbb{Z}$, let $\sup_{2^{-2j_s}\leq s<2^{2-2j_s}} v_j(s,x)=v_{j,j_s}(x)$, that denotes
\begin{align*}
f_{j'}(t,x)\lesssim2^{j'(2-2m')}\sum_{t2^{2j_s}\geq1}\int^{2^{2-2j_s}}_{2^{-2j_s}}\sum_{\lvert j-j'\rvert\leq3}s^{-m'}dsM(v_{j,j_s})(x) .
\end{align*}
Hence

\begin{align*}
f_{j',j_t}(x)=\sup_{2^{-2j_t}\leq t<2^{2-2j_t}}f_{j'}(t,x)\lesssim\sum_{\lvert j-j'\rvert\leq3}\sum_{j_s\geq j_t}2^{(j'-j_s)(2-2m')}M(v_{j,j_s})(x).
\end{align*}
For $0<\delta<2-4m'$ we obtain
\begin{align*}
&\sup_{j_t\in\mathbb{Z}}\sum_{j'\leq j_t}(A^{p, m'}_{j',j_t})^q\\ =&\sup_{j_t\in\mathbb{Z}}\sum_{j'\leq j_t}(2^{2(j'-j_t)m'}2^{j'(\frac
np-1)}\sup_{\lambda>0}\lambda\lvert\{x:f_{j',j_t}(x)>\lambda\}\rvert^{\frac 1p})^q\\
\lesssim&\sup_{j_t\in\mathbb{Z}}\sum_{j'\leq j_t}(2^{2(j'-j_t)m'}2^{j'(\frac np-1)}\sup_{\lambda>0}\lambda\lvert\{x:\sum_{\lvert j-j'\rvert\leq3}\sum_{j_s\geq
j_t}2^{(j'-j_s)(2-2m')}M(v_{j,j_s})(x)>\lambda\}\rvert^{\frac 1p})^q\\
\lesssim&\sup_{j_t\in\mathbb{Z}}\sum_{j'\leq j_t}(\sum_{\lvert j-j'\rvert\leq3}\sum_{j_s\geq j_t}2^{2(j'-j_t)m'p}2^{j'(
n-p)}\\&\times\sup_{\lambda>0}\lambda^p\lvert\{x:2^{(j'-j_s)(2-2m')}M(v_{j,j_s})(x)>\frac{2^{-\delta j_s}}{\sum_{j_s\geq j_t}7\times2^{-\delta j_s}}\lambda\}\rvert)^{\frac
qp}\\
\lesssim&\sup_{j_t\in\mathbb{Z}}\sum_{j'\leq j_t}[\sum_{\lvert j-j'\rvert\leq3}\sum_{j_s\geq j_t}2^{2(j'-j_t)m'p}2^{j'( n-p)}\\&\times2^{(j'-j_s)(2-2m')p}(\frac{\sum_{j_s\geq
j_t}2^{-\delta j_s}}{2^{-\delta j_s}})^p\sup_{\lambda>0}\lambda^p\lvert\{x:M(v_{j,j_s})(x)>\lambda\}\rvert]^{\frac qp}\\
\lesssim&\sup_{j_t\in\mathbb{Z}}\sum_{j'\leq j_t}\sum_{\lvert j-j'\rvert\leq3}[\sum_{j_s\geq j_t}2^{(j_s-j_t)(2m'+\delta)p}2^{j'(
n-p)}2^{(j'-j_s)2p}\sup_{\lambda>0}\lambda^p\lvert\{x:v_{j,j_s}(x)>\lambda\}\rvert]^{\frac qp}\\
\lesssim&\sup_{j_t\in\mathbb{Z}}\sum_{j\leq j_t+3}[\sum_{j_s\geq j_t}2^{(j_s-j_t)(2m'+\delta)p}2^{j(
n-p)}2^{(j-j_s)2p}\sup_{\lambda>0}\lambda^p\lvert\{x:v_{j,j_s}(x)>\lambda\}\rvert]^{\frac qp}.
\end{align*}
For $q\leq p$, since $\delta<2-4m'$, we have
\begin{align*}
&\sup_{j_t\in\mathbb{Z}}\sum_{j'\leq j_t}(A^{p, m'}_{j',j_t})^q\\ \lesssim&\sup_{j_t\in\mathbb{Z}}\sum_{j\leq j_t+3}\sum_{j_s\geq j_t}[2^{(j_s-j_t)(2m'+\delta)p}2^{j(
n-p)}2^{(j-j_s)2p}\sup_{\lambda>0}\lambda^p\lvert\{x:v_{j,j_s}(x)>\lambda\}\rvert]^{\frac qp}\\
\lesssim&\sup_{j_t\in\mathbb{Z}}\sum_{j_s\geq j_t}\sum_{j\leq j_s}[2^{(j_s-j_t)(4m'-2+\delta)p}2^{j(
n-p)}2^{(j-j_s)2pm'}\sup_{\lambda>0}\lambda^p\lvert\{x:v_{j,j_s}(x)>\lambda\}\rvert]^{\frac qp}\\
\lesssim&\sup_{j_s\in\mathbb{Z}}\sum_{j\leq j_s}[2^{j( n-p)}2^{(j-j_s)2pm'}\sup_{\lambda>0}\lambda^p\lvert\{x:v_{j,j_s}(x)>\lambda\}\rvert]^{\frac qp}.
\end{align*}
For $q>p$, let $0<\delta'<(2-4m'-\delta)p$ and use H\"older inequality to $j_s$, we get
\begin{align*}
&\sup_{j_t\in\mathbb{Z}}\sum_{j'\leq j_t}(A^{p, m'}_{j',j_t})^q\\ \lesssim&\sup_{j_t\in\mathbb{Z}}\sum_{j\leq j_t+3}\sum_{j_s\geq
j_t}[2^{\delta'j_s}2^{(j_s-j_t)(2m'+\delta)p}2^{j( n-p)}2^{(j-j_s)2p}\sup_{\lambda>0}\lambda^p\lvert\{x:v_{j,j_s}(x)>\lambda\}\rvert]^{\frac qp}\\&\times(\sum_{j_s\geq
j_t}2^{-\delta' j_s\frac q{q-p}})^{\frac qp-1}\\
\lesssim&\sup_{j_t\in\mathbb{Z}}\sum_{j\leq j_t+3}\sum_{j_s\geq j_t}[2^{\delta'(j_s-j_t)}2^{(j_s-j_t)(2m'+\delta)p}2^{j(
n-p)}2^{(j-j_s)2p}\sup_{\lambda>0}\lambda^p\lvert\{x:v_{j,j_s}(x)>\lambda\}\rvert]^{\frac qp}.
\end{align*}
For $\delta'<(2-4m'-\delta)p$, similarly, we have
\begin{align*}
&\sup_{j_t\in\mathbb{Z}}\sum_{j'\leq j_t}(A^{p, m'}_{j',j_t})^q\lesssim\sup_{j_s\in\mathbb{Z}}\sum_{j\leq j_s}[2^{j(
n-p)}2^{(j-j_s)2pm'}\sup_{\lambda>0}\lambda^p\lvert\{x:v_{j,j_s}(x)>\lambda\}\rvert]^{\frac qp}.
\end{align*}

For the case of $1\leq t2^{2j'}\leq 4$, the proof is more easy. So we omit it.
For $t2^{2j'}\geq 4$, we write the integration of $s$ as the sum of three terms: $\int ^{2^{-2j'} } _{0} $, $\int ^{\frac{t}{2}}_{2^{-2j'}}$ and
$\int^{t}_{\frac{t}{2}}$.
We write the respective terms by $f^{\epsilon',1}_{j',k'}(t)$, $f^{\epsilon',2}_{j',k'}(t)$ and $f^{\epsilon',3}_{j',k'}(t)$. Hence
$$\left\vert f^{\epsilon'}_{j',k'}(t)\right\vert\lesssim \left\vert f^{\epsilon',1}_{j',k'}(t)\right\vert + \left\vert f^{\epsilon',2}_{j',k'}(t)\right\vert+ \left\vert
f^{\epsilon',3}_{j',k'}(t)\right\vert.$$
We consider the norm of each of the three parts in Besov-Lorentz space ${ ^{m'}_{m} \dot{B}}^{\frac{n}{p}-1,
q}_{p,\infty}$.
For the first part,
$\sup\limits_{k\in \mathbb{Z}^n }  \vert u^{0}_{j-2,k}(s)\vert
\lesssim  2^{-\frac{nj}{2}} 2^{j(1-2m')} s^{-m'}$.
Just like in the case of $t2^{2j'}<1$. We obtain that
\begin{align*}
f^1_{j'}(t,x)\lesssim&\sum_{\lvert j-j'\rvert\leq3}\sum_{j_s\geq j'}e^{-ct2^{2j'}}2^{(j-j_s)(2-2m')}M(v_{j,j_s})(x);\\
f^1_{j',j_t}(x)\lesssim&\sum_{\lvert j-j'\rvert\leq3}\sum_{j_s\geq j'}e^{-c2^{2(j'-j_t)}}2^{(j-j_s)(2-2m')}M(v_{j,j_s})(x).
\end{align*}
Hence, for $0<\delta<2-4m'$ we have
\begin{align*}
&\sup_{j_t\in\mathbb{Z}}\sum_{j'\geq j_t}(A^{p, m,1}_{j',j_t})^q\\ =:&\sup_{j_t\in\mathbb{Z}}\sum_{j'\geq j_t}(2^{2(j'-j_t)m}2^{j'(\frac
np-1)}\sup_{\lambda>0}\lambda\lvert\{x:f^1_{j',j_t}(x)>\lambda\}\rvert^{\frac 1p})^q\\
\lesssim&\sup_{j_t\in\mathbb{Z}}\sum_{j'\geq j_t}(2^{2(j'-j_t)m}2^{j'(\frac np-1)}\\&\times\sup_{\lambda>0}\lambda\lvert\{x:\sum_{\lvert j-j'\rvert\leq3}\sum_{j_s\geq
j'}e^{-c2^{2(j'-j_t)}}2^{(j-j_s)(2-2m')}M(v_{j,j_s})(x)>\lambda\}\rvert^{\frac 1p})^q\\
\lesssim&\sup_{j_t\in\mathbb{Z}}\sum_{j'\geq j_t}(\sum_{\lvert j-j'\rvert\leq3}\sum_{j_s\geq
j'}2^{2(j'-j_t)mp}2^{j'(n-p)}\\&\times\sup_{\lambda>0}\lambda^p\lvert\{x:e^{-c2^{2(j'-j_t)}}2^{(j-j_s)(2-2m')}M(v_{j,j_s})(x)>\frac{2^{-\delta j_s}}{\sum_{j_s\geq
j'}7\times2^{-\delta j_s}}\lambda\}\rvert)^{\frac qp}\\
\lesssim&\sup_{j_t\in\mathbb{Z}}\sum_{j'\geq j_t}(\sum_{\lvert j-j'\rvert\leq3}\sum_{j_s\geq j'}2^{2(j'-j_t)mp}2^{j'(n-p)}\\&\times
e^{-cp2^{2(j'-j_t)}}2^{(j-j_s)(2-2m')p}2^{\delta(j_s-j')p}\sup_{\lambda>0}\lambda^p\lvert\{x:M(v_{j,j_s})(x)>\lambda\}\rvert)^{\frac qp}\\
\lesssim&\sup_{j_t\in\mathbb{Z}}\sum_{j\geq j_t-3}(\sum_{j_s\geq j-3}2^{2(j-j_t)mp}2^{j(n-p)}\\&\times
e^{-\tilde{c}p2^{2(j-j_t)}}2^{(j-j_s)(2-2m')p}2^{\delta(j_s-j)p}\sup_{\lambda>0}\lambda^p\lvert\{x:v_{j,j_s}(x)>\lambda\}\rvert)^{\frac qp}.
\end{align*}
For $q\leq p$, since $\delta<2-4m'$, we have
\begin{align*}
&\sup_{j_t\in\mathbb{Z}}\sum_{j'\geq j_t}(A^{p, m,1}_{j',j_t})^q\\ \lesssim&\sup_{j_t\in\mathbb{Z}}\sum_{j\geq j_t-3}\sum_{j_s\geq j-3}(2^{2(j-j_t)mp}2^{j(n-p)}\\&\times
e^{-\tilde{c}p2^{2(j-j_t)}}2^{(j-j_s)(2-2m')p}2^{\delta(j_s-j)p}\sup_{\lambda>0}\lambda^p\lvert\{x:v_{j,j_s}(x)>\lambda\}\rvert)^{\frac qp}\\
\lesssim&\sup_{j_t\in\mathbb{Z}}\sum_{j_s\geq j_t-6}\sum_{j_t-3\leq j\leq
j_s+3}(2^{2(j-j_s)m'p}2^{j(n-p)}\sup_{\lambda>0}\lambda^p\lvert\{x:v_{j,j_s}(x)>\lambda\}\rvert\\&\times 2^{(j_t-j_s)(2-4m'-\delta)p})^{\frac qp}
\\\lesssim&\sup_{j_s\in\mathbb{Z}}\sum_{ j\leq j_s}(2^{2(j-j_s)m'p}2^{j(n-p)}\sup_{\lambda>0}\lambda^p\lvert\{x:v_{j,j_s}(x)>\lambda\}\rvert)^{\frac qp}.
\end{align*}
For $q>p$, let $0<\delta'<(2-4m'-\delta)p$ and use H\"older inequality to $j_s$, we have
\begin{align*}
&\sup_{j_t\in\mathbb{Z}}\sum_{j'\geq j_t}(A^{p, m,1}_{j',j_t})^q\\ \lesssim&\sup_{j_t\in\mathbb{Z}}\sum_{j\geq j_t-3}\sum_{j_s\geq j-3}(2^{\delta' j_s}2^{2(j-j_t)mp}2^{j(n-p)}
e^{-\tilde{c}p2^{2(j-j_t)}}2^{(j-j_s)(2-2m')p}2^{\delta(j_s-j)p}\\&\sup_{\lambda>0}\lambda^p\lvert\{x:v_{j,j_s}(x)>\lambda\}\rvert)^{\frac qp}\times(\sum_{j_s\geq
j}2^{-\delta' j_s\frac q{q-p}})^{\frac qp-1}\\
\lesssim&\sup_{j_t\in\mathbb{Z}}\sum_{j\geq j_t-3}\sum_{j_s\geq j-3}(2^{\delta' (j_s-j)}2^{2(j-j_t)mp}2^{j(n-p)}
e^{-\tilde{c}p2^{2(j-j_t)}}2^{(j-j_s)(2-2m')p}2^{\delta(j_s-j)p}\\&\sup_{\lambda>0}\lambda^p\lvert\{x:v_{j,j_s}(x)>\lambda\}\rvert)^{\frac qp}.
\end{align*}
For $\delta'<(2-4m'-\delta)p$, similarly, we have
\begin{align*}
&\sup_{j_t\in\mathbb{Z}}\sum_{j'\leq j_t}(A^{p, m',1}_{j',j_t})^q\lesssim\sup_{j_s\in\mathbb{Z}}\sum_{j\leq j_s}[2^{j(
n-p)}2^{(j-j_s)2pm'}\sup_{\lambda>0}\lambda^p\lvert\{x:v_{j,j_s}(x)>\lambda\}\rvert]^{\frac qp}.
\end{align*}

%%%%%%%%%%%%%%%%%

For the second part,
$\sup\limits_{k\in \mathbb{Z}^n }  \vert u^{0}_{j-2,k}(s)\vert
\lesssim  2^{-\frac{nj}{2}} s^{-\frac{1}{2}}$.
We obtain
\begin{align*}
f^2_{j'}(t,x)\lesssim&\sum_{\lvert j-j'\rvert\leq3}\sum_{j_s\geq j',t2^{2j_s}\geq2}2^{j'-j_s}e^{-\tilde{c}t2^{2j'}}M(v_{j,j_s})(x);\\
f^2_{j',j_t}(x)\lesssim&\sum_{\lvert j-j'\rvert\leq3}\sum_{j'\geq j_s\geq j_t}2^{j'-j_s}e^{-\tilde{c}2^{2(j'-j_t)}}M(v_{j,j_s})(x).
\end{align*}
Hence
\begin{align*}
&\sup_{j_t\in\mathbb{Z}}\sum_{j'\geq j_t}(A^{p, m,2}_{j',j_t})^q\\ =:&\sup_{j_t\in\mathbb{Z}}\sum_{j'\geq j_t}(2^{2(j'-j_t)m}2^{j'(\frac
np-1)}\sup_{\lambda>0}\lambda\lvert\{x:f^2_{j',j_t}(x)>\lambda\}\rvert^{\frac 1p})^q\\
\lesssim&\sup_{j_t\in\mathbb{Z}}\sum_{j'\geq j_t}(2^{2(j'-j_t)m}2^{j'(\frac np-1)}\\&\sup_{\lambda>0}\lambda\lvert\{x:\sum_{\lvert j-j'\rvert\leq3}\sum_{j'\geq j_s\geq
j_t}2^{j'-j_s}e^{-\tilde{c}2^{2(j'-j_t)}}M(v_{j,j_s})(x)>\lambda\}\rvert^{\frac 1p})^q\\
\lesssim&\sup_{j_t\in\mathbb{Z}}\sum_{j'\geq j_t}(2^{2(j'-j_t)m}2^{j'(\frac np-1)}\sum_{\lvert j-j'\rvert\leq3}\sum_{j'\geq j_s\geq
j_t}\\&\sup_{\lambda>0}\lambda\lvert\{x:2^{j'-j_s}e^{-\tilde{c}2^{2(j'-j_t)}}M(v_{j,j_s})(x)>\frac1{7\times (j'-j_t)}\lambda\}\rvert^{\frac 1p})^q\\
\lesssim&\sup_{j_t\in\mathbb{Z}}\sum_{j'\geq j_t}\sum_{\lvert j-j'\rvert\leq3}\sum_{j'\geq j_s\geq
j_t}(2^{2(j'-j_t)mp}2^{j'(n-p)}\\&2^{(j'-j_s)p}e^{-\tilde{c}p2^{2(j'-j_t)}}(j'-j_t)^{p+\lvert1-\frac
pq\rvert}\sup_{\lambda>0}\lambda^p\lvert\{x:M(v_{j,j_s})(x)>\lambda\}\rvert)^{\frac qp}\\
\lesssim&\sup_{j_t\in\mathbb{Z}}\sum_{j\geq j_t-3}\sum_{j+3\geq j_s\geq j_t}(2^{2(j-j_t)mp}2^{j(n-p)}\\&2^{(j-j_s)p}e^{-\tilde{c}p2^{2(j-j_t)}}(j-j_t)^{p+\lvert1-\frac
pq\rvert}\sup_{\lambda>0}\lambda^p\lvert\{x:M(v_{j,j_s})(x)>\lambda\}\rvert)^{\frac qp}\\
\lesssim&\sup_{j_t\in\mathbb{Z}}\sum_{j_s\geq j_t}2^{(j_t-j_s)p}\sum_{j\geq j_s-3}(2^{2(j-j_t)mp}2^{j(n-p)}\\&2^{(j-j_t)p}e^{-\tilde{c}p2^{2(j-j_t)}}(j-j_t)^{p+\lvert1-\frac
pq\rvert}\sup_{\lambda>0}\lambda^p\lvert\{x:M(v_{j,j_s})(x)>\lambda\}\rvert)^{\frac qp}\\
\lesssim&\sup_{j_s\in\mathbb{Z}}\sum_{j\geq j_s}(2^{2(j-j_t)mp}2^{j(n-p)}\sup_{\lambda>0}\lambda^p\lvert\{x:v_{j,j_s}(x)>\lambda\}\rvert)^{\frac qp} .
\end{align*}

%%%%%%%%%%%%%%%%%
%%%%%%%%%%%%%%%%%%%
%%%%%%%%%%%%%%%%%

For the third part,
$\sup\limits_{k\in \mathbb{Z}^n }  \vert u^{0}_{j-2,k}(s)\vert
\lesssim  2^{-\frac{nj}{2}} t^{-\frac{1}{2}}$.
 Which implies,
\begin{align*}
&f^3_{j'}(t,x)\lesssim2^{j'}\sum\limits_{  \epsilon', k'}\sum_{\lvert j-j'\rvert\leq3}\int^{t}_{\frac t2}s^{-\frac 12}e^{-c(t-s)2^{2j'}}\sum\limits_{  \epsilon, k''}
\frac{2^{\frac n2j}\left\vert v^{\epsilon}_{j,k''}(s)\right\vert}{
(1+\vert 2^{j-j'}k'-k''\vert)^{N}} ds\chi(2^{j'}x-k').
\end{align*}
Hence
\begin{align*}
&f^3_{j',j_t}(x)=\sup_{2^{-2j_t}\leq t<2^{2-2j_t}}f_{j'}(t,x)\\\lesssim&2^{j'}\sum_{\lvert j-j'\rvert\leq3}\sum_{j_s=j_t\,or\,j_t+1}\sup_{2^{-2j_t}\leq
t<2^{2-2j_t}}\int^{2^{2-2j_s}}_{2^{-2j_s}}s^{-\frac 12}e^{-c(t-s)2^{2j'}}dsM(v_{j,j_s})(x)\\\lesssim&\sum_{\lvert
j-j'\rvert\leq3}\sum_{j_s=j_t\,or\,j_t+1}2^{j_s-j'}M(v_{j,j_s})(x).
\end{align*}
Thus,
\begin{align*}
&\sup_{j_t\in\mathbb{Z}}\sum_{j'\geq j_t}(A^{p, m,3}_{j',j_t})^q\\ =:&\sup_{j_t\in\mathbb{Z}}\sum_{j'\geq j_t}(2^{2(j'-j_t)m}2^{j'(\frac
np-1)}\sup_{\lambda>0}\lambda\lvert\{x:f^3_{j',j_t}(x)>\lambda\}\rvert^{\frac 1p})^q\\
\lesssim&\sup_{j_t\in\mathbb{Z}}\sum_{j'\geq j_t}(2^{2(j'-j_t)m}2^{j'(\frac np-1)}\\&\sup_{\lambda>0}\lambda\lvert\{x:\sum_{\lvert
j-j'\rvert\leq3}\sum_{j_s=j_t\,or\,j_t+1}2^{j_s-j'}M(v_{j,j_s})(x)>\lambda\}\rvert^{\frac 1p})^q\\
\lesssim&\sup_{j_t\in\mathbb{Z}}\sum_{j'\geq j_t}\sum_{\lvert
j-j'\rvert\leq3}\sum_{j_s=j_t\,or\,j_t+1}(2^{2(j'-j_t)mp}2^{j'(n-p)}2^{(j_s-j')p}\\&\sup_{\lambda>0}\lambda^p\lvert\{x:M(v_{j,j_s})(x)>\lambda\}\rvert)^{\frac qp}\\
\lesssim&\sup_{j_s\in\mathbb{Z}}\sum_{j\geq j_s-3}(2^{2(j-j_s)mp}2^{j(n-p)}2^{(j_s-j)p}\sup_{\lambda>0}\lambda^p\lvert\{x:M(v_{j,j_s})(x)>\lambda\}\rvert)^{\frac qp}\\
\lesssim&\sup_{j_s\in\mathbb{Z}}\sum_{j\geq j_s}(2^{2(j-j_s)mp}2^{j(n-p)}\sup_{\lambda>0}\lambda^p\lvert\{x:v_{j,j_s}(x)>\lambda\}\rvert)^{\frac qp} .
\end{align*}

%%%%%%%%%%%%%%%%%%%%%%%%%%%%%%%%%%%%%%%%%%%%%%%%%%%%%%%%%%%%%%%%%%(

%%%%%%%%%%%%%%%%%%%%%% section 5 %%%%%%%%%%%%%%%%%%%%%%%%%%%%%%%%%%

%%%%%%%%%%%%%%%%%%%%%%%%%%%%%%%%%%%%%%%%%%%%%%%%%%%%%%%%%%%%%%%%%%%%%
\section{ Proof of Theorem \ref{cou}}\label{sec5}
%Define
%\begin{align*}
%C_{l}(u,v)&=\int^{t}_{0} e^{(t-s)\Delta} \partial_{x_l}\left(\sum\limits_{j} Q_j u Q_{j}v\right) ds;\\
%C_{l,l',l''}(u,v)&=\int^{t}_{0} e^{(t-s)\Delta} \partial_{x_l}\partial_{x_{l'}}\partial_{x_{l''}}(-\Delta)^{-1}\left(\sum\limits_{j} Q_j u %Q_{j}v\right) ds.
%\end{align*}
%They are termd as couple flow.

In this section we will consider the boundedness of couple flow $C_{l}(u,v)$ and $C_{l,l',l''}(u,v)$.
Just like in Section \ref{sec4} we consider only $C_{l,l',l''}(u,v)$.
Note that the following properties on the support of Fourier transform:
$$ {\rm Supp} \widehat{\phi^{\epsilon}_{j,k} \phi^{\tilde{\epsilon}}_{j,k''}} \subset
\Big\{\vert\xi_i\vert\leq \frac{16\pi}{3} \cdot 2^{j}, \forall i=1,\cdots,n\Big\},$$
$$ {\rm Supp} \widehat{\phi^{\epsilon'}_{j', k'}} \subset
\Big\{\vert\xi_i\vert\leq \frac{4\pi}{3} \cdot 2^{j'}, \mbox{ if } \epsilon_i=0;
\frac{2\pi}{3} \cdot 2^{j'}\leq
\vert\xi_i\vert\leq \frac{8\pi}{3} \cdot 2^{j'}, \mbox{ if } \epsilon_i=1\Big\}.$$
Hence
\begin{align*}
&f^{\epsilon'}_{j',k'}(t)= \left\langle C_{l,l',l''}(u,v), \phi^{\epsilon'}_{j', k'}\right\rangle\\
&= \left\langle \int^{t}_{0} e^{(t-s)\Delta} \partial_{x_l}\partial_{x_{l'}}\partial_{x_{l''}}(-\Delta)^{-1} \Big\{\!\!\! \!\!
 \sum\limits_{j\geq  j'-2,\epsilon,\tilde{\epsilon}, k,k''}  u^{\epsilon}_{j,k}(s) v^{\tilde{\epsilon}}_{j,k''}(s)\phi^{\epsilon}_{j,k}(x) \phi^{\tilde{\epsilon}}_{j,k''}(x)
\Big\} ds, \phi^{\epsilon'}_{j', k'}\right\rangle.
\end{align*}
By Lemma \ref{le:4.2} we can also get
\begin{align*}
\vert f^{\epsilon'}_{j',k'}(t)\vert
\lesssim & \int^{t}_{0}  \sum\limits_{j\geq  j'-2 } \sum\limits_{\epsilon,\tilde{\epsilon}, k,k''} \left\vert u^{\epsilon}_{j,k}(s)\right\vert \left\vert
v^{\tilde{\epsilon}}_{j,k''}(s)\right\vert
\left(1+\vert k-k''\vert\right)^{-N} \\
&\times \left(1+ \left\vert2^{j'-j}k-k'\right\vert\right)^{-N} 2^{(\frac{n}{2}+1)j'} e^{-c(t-s) 2^{2j'}}ds.
\end{align*}

%%%%%%%%%%%%%%%%%cccccccccccccc
%%%%%%%%%%%%%%%%%%%%

%%%%%%%%%%%%%%%%
%%%%%%%%%%%%%%%
%%%%%%%%%%%%%%%%%%%
%%%%%%%%%%%%%%%%%%

Similar to Section \ref{sec4}, here we only discuss the case of $1\leq q<\infty$ and omit the case of $q=\infty$.
Firstly, we consider first the case where $0<t2^{2j'}\leq 1$.
We write the integration $\int^{t}_{0}$ as the sum of three parts:
$\int^{2^{-2j}}_{0}$, $\int^{\frac{t}{2}}_{2^{-2j}}$ and $\int^{t}_{\frac{t}{2}}$.
If $t\leq2^{-2j}$, we just consider the case of $\int^{2^{-2j}}_{0}$. For this case, we have
$ |v^{\epsilon}_{j,k}(s)|\leq (s2^{2j})^{-m'} 2^{(1-\frac{n}{2})j}.$
\begin{align*}
\vert f^{\epsilon',1}_{j',k'}(t)\vert
\lesssim &   \int^{2^{-2j}}_{0}  \sum\limits_{j\geq  j'-2 } \sum\limits_{\epsilon,\tilde{\epsilon}, k,k''}
\left\vert u^{\epsilon}_{j,k}(s)\right\vert
\left(1+\vert k-k''\vert\right)^{-N} \\
&\times \left(1+ \left\vert2^{j'-j}k-k'\right\vert\right)^{-N} 2^{(\frac{n}{2}+1)j'} (s2^{2j})^{-m'} 2^{(1-\frac{n}{2})j} ds\\
\lesssim &  \sum\limits_{j\geq  j'-2 } \int^{2^{-2j}}_{0}   \sum\limits_{\epsilon,k}
\frac{\left\vert u^{\epsilon}_{j,k}(s)\right\vert}
{\left(1+ \left\vert2^{j'-j}k-k'\right\vert\right)^{N} }  (s2^{2j})^{-m'} 2^{(1+\frac{n}{2})(j'-j)} \times2^{2j}  ds.\\
\end{align*}
For any $j_s\in\mathbb{Z}$, let $\sup_{2^{-2j_s}\leq s<2^{2-2j_s}} u(s,x)=u_{j_s}(x)$
and $\sup_{2^{-2j_s}\leq s<2^{2-2j_s}} u^{\epsilon}_{j,k}(s)=(u^{\epsilon}_{j,k})_{j_s}$.
According to lemma \ref{le:2.6}, we obtain
\begin{align*}
f^1_{j',j_t}(x)=&\sup_{2^{-2j_t}\leq t<2^{2-2j_t}}f^1_{j'}(t,x)=\sup_{2^{-2j_t}\leq t<2^{2-2j_t}}2^{\frac{n}{2}j'} \sum\limits_{(\epsilon',k')\in
\Gamma}|f^{\epsilon',1}_{j',k'}(t)|\chi(2^{j'}x-k')\\
\lesssim &   \sup_{2^{-2j_t}\leq t<2^{2-2j_t}} \sum\limits_{(\epsilon',k')\in \Gamma} \sum\limits_{j\geq  j'-2 } \int^{2^{-2j}}_{0} 2^{\frac
n2(j'-j)}2^{2j}\sum\limits_{\epsilon,k}\frac{2^{\frac n2j}\left\vert u^{\epsilon}_{j,k}(s)\right\vert}
{\left(1+ \left\vert2^{j'-j}k-k'\right\vert\right)^{N} }\\&\times2^{(\frac n2+1)(j'-j)} (s2^{2j})^{-m'}
 ds\chi(2^{j'}x-k')\\
 \lesssim &   \sup_{2^{-2j_t}\leq t<2^{2-2j_t}} \sum\limits_{(\epsilon',k')\in \Gamma} \sum\limits_{j\geq  j'-2 } \sum_{j_s\geq j+1}\int^{2^{2-2j_s}}_{2^{-2j_s}} 2^{\frac
 n2(j'-j)}2^{2j}\\&\times2^{(\frac n2+1)(j'-j)} (s2^{2j})^{-m'}
 ds\sum\limits_{\epsilon,k}\frac{2^{\frac n2j}\left\vert (u^{\epsilon}_{j,k})_{j_s}\right\vert}
{\left(1+ \left\vert2^{j'-j}k-k'\right\vert\right)^{N} }\chi(2^{j'}x-k')\\
 \lesssim &   \sup_{2^{-2j_t}\leq t<2^{2-2j_t}} \sum\limits_{(\epsilon',k')\in \Gamma} \sum\limits_{j\geq  j'-2 } \sum_{j_s\geq j+1}\int^{2^{2-2j_s}}_{2^{-2j_s}} 2^{\frac
 n2(j'-j)}2^{2j}\\&\times2^{(\frac n2+1)(j'-j)} (s2^{2j})^{-m'}
 ds M(u_{j,j_s})(x)2^{n(j-j')}\chi(2^{j'}x-k')\\
\lesssim & \sup_{2^{-2j_t}\leq t<2^{2-2j_t}} \sum\limits_{j\geq  j'-2 } \sum_{j_s\geq j+1}\int^{2^{2-2j_s}}_{2^{-2j_s}} 2^{j'+j}(s2^{2j})^{-m'}dsM(u_{j,j_s})(x)\\
\lesssim &   \sum\limits_{j\geq  j'-2 }\sum\limits_{j_s\geq j+1} M(u_{j,j_s})(x)2^{j'-j}2^{2(j-j_s)(1-m')} .
\end{align*}
For $s\leq2^{-2j}$, let $A^{p, m',1}_{j',j_t}=2^{2(j-j_t)m'}2^{j(\frac np-1)}\sup_{\lambda>0}\lambda|\{x:f^1_{j,j_t}(x)>\lambda\}|^{\frac 1p}$. That is to say, for $\frac
np+4m'-2<\delta<\frac np$, $0<\delta'<\delta-\frac np+2-4m'$,
\begin{align*}
&\sup_{j_t\in\mathbb{Z}}\sum_{j'\leq j_t}(A^{p, m',1}_{j',j_t})^q\\=&\sup_{j_t\in\mathbb{Z}}\sum_{j'\leq j_t}2^{2(j'-j_t)m'q}2^{j'q(\frac
np-1)}\sup_{\lambda>0}\lambda^q|\{x:f^1_{j',j_t}(x)>\lambda\}|^{\frac qp}\\
\lesssim & \sup_{j_t\in\mathbb{Z}}\sum_{j'\leq j_t}2^{2(j'-j_t)m'q}2^{j'q(\frac np-1)}\sup_{\lambda>0}\lambda^q[\sum_{j\geq j'-2}\sum_{j_s\geq
j+1}\\&|\{x:M(u_{j,j_s})(x)2^{j'-j}2^{2(j-j_s)(1-m')}>\frac{2^{-\delta j}2^{-\delta'j_s}}{\sum_{j\geq j'-2}\sum_{j_s\geq j+1}2^{-\delta j}2^{-\delta'j_s}}\lambda\}|]^{\frac
qp}\\
\lesssim & \sup_{j_t\in\mathbb{Z}}\sum_{j'\leq j_t}[2^{2(j'-j_t)m'p}2^{j'p(\frac np-1)}\sum_{j\geq j'-2}\sum_{j_s\geq
j+1}\sup_{\lambda>0}\lambda^p\\&|\{x:M(u_{j,j_s})(x)>\lambda\}|2^{(j'-j)p}2^{2p(j-j_s)(1-m')}(\frac{\sum_{j\geq j'-2}\sum_{j_s\geq j+1}2^{-\delta j}2^{-\delta'j_s}}{2^{-\delta
j}2^{-\delta'j_s}})^p]^{\frac qp}\\
\lesssim &  \sup_{j_t\in\mathbb{Z}}\sum_{j'\leq j_t}[\sum_{j\geq j'-2}\sum_{j_s\geq
j+1}\sup_{\lambda>0}\lambda^p|\{x:M(u_{j,j_s})(x)>\lambda\}|\\&\times2^{2(j'-j_t)m'p}2^{nj'}2^{-jp}2^{2p(j-j_s)(1-m')}2^{\delta (j-j')p+\delta'(j_s-j)p}]^{\frac qp}  .
\end{align*}
By lemma \ref{le:2.7}, we can get
\begin{align*}
\sup_{j_t\in\mathbb{Z}}\sum_{j'\leq j_t}(A^{p, m',1}_{j',j_t})^q\lesssim&\sup_{j_t\in\mathbb{Z}}\sum_{j'\leq j_t}(\sum_{j\geq j'-2}\sum_{j_s\geq
j+1}\sup_{\lambda>0}\lambda^p|\{x:u_{j,j_s}(x)>\lambda\}|\\&\times2^{2(j'-j_t)m'p}2^{nj'}2^{-jp}2^{2p(j-j_s)(1-m')}2^{\delta (j-j')p+\delta'(j_s-j)p})^{\frac qp}.
\end{align*}
\par For $q\leq p$, we have
\begin{align*}
\sup_{j_t\in\mathbb{Z}}\sum_{j'\leq j_t}(A^{p, m',1}_{j',j_t})^q\lesssim&\sup_{j_t\in\mathbb{Z}}\sum_{j'\leq j_t}\sum_{j\geq j'-2}\sum_{j_s\geq
j+1}(\sup_{\lambda>0}\lambda^p|\{x:u_{j,j_s}(x)>\lambda\}|\\&\times2^{2(j'-j_t)m'p}2^{nj'}2^{-jp}2^{2p(j-j_s)(1-m')}2^{\delta (j-j')p+\delta'(j_s-j)p})^{\frac qp} \\
\lesssim&\sup_{j_t\in\mathbb{Z}}\sum_{j'\leq j_t}\sum_{j_s\geq j'-1}\sum_{j\leq
j_s-1}(\sup_{\lambda>0}\lambda^p|\{x:u_{j,j_s}(x)>\lambda\}|\\&\times2^{2(j'-j_t)m'p}2^{nj'}2^{-jp}2^{2p(j-j_s)(1-m')}2^{\delta (j-j')p+\delta'(j_s-j)p})^{\frac qp}\\
\lesssim&\sup_{j_t\in\mathbb{Z}}(\sum_{j_s\leq j_t-1}\sum_{j'\leq j_s+1}+\sum_{j_s\geq j_t-1}\sum_{j'\leq j_t})\sum_{j\leq
j_s-1}(\sup_{\lambda>0}\lambda^p|\{x:u_{j,j_s}(x)>\lambda\}|\\&\times2^{2(j'-j_t)m'p}2^{nj'}2^{-jp}2^{2p(j-j_s)(1-m')}2^{\delta (j-j')p+\delta'(j_s-j)p})^{\frac qp}  .
\end{align*}
Since $2-4m'-\frac np+\delta-\delta'>0$ and $\delta<\frac np$, we can obtain
\begin{align*}
&\sup_{j_t\in\mathbb{Z}}\sum_{j_s\leq j_t-1}\sum_{j'\leq j_s+1}\sum_{j\leq
j_s-1}(\sup_{\lambda>0}\lambda^p|\{x:u_{j,j_s}(x)>\lambda\}|\\&\times2^{2(j'-j_t)m'p}2^{nj'}2^{-jp}2^{2p(j-j_s)(1-m')}2^{\delta (j-j')p+\delta'(j_s-j)p})^{\frac qp} \\
\lesssim&\sup_{j_t\in\mathbb{Z}}\sum_{j_s\leq j_t-1}\sum_{j\leq
j_s-1}(\sup_{\lambda>0}\lambda^p|\{x:u_{j,j_s}(x)>\lambda\}|\\&\times2^{2(j_s-j_t)m'p}2^{nj_s}2^{-jp}2^{2p(j-j_s)(1-m')}2^{\delta (j-j_s)p+\delta'(j_s-j)p})^{\frac qp}\\
\lesssim&\sup_{j_t\in\mathbb{Z}}\sum_{j_s\leq j_t-1}\sum_{j\leq j_s-1}\sup_{\lambda>0}\lambda^q|\{x:u_{j,j_s}(x)>\lambda\}|^{\frac qp}2^{2(j-j_s)m'q}2^{j(\frac
np-1)q}\\&\times2^{2(j_s-j_t)m'q}2^{\frac np(j_s-j)q}2^{2q(j-j_s)(1-2m')}2^{\delta (j-j_s)q+\delta'(j_s-j)q}\\
\lesssim&\sup_{j_t\in\mathbb{Z}}\sum_{j_s\leq j_t-1}\sum_{j\leq j_s}\sup_{\lambda>0}\lambda^q|\{x:u_{j,j_s}(x)>\lambda\}|^{\frac qp}2^{2(j-j_s)m'q}2^{j(\frac
np-1)q}2^{2(j_s-j_t)m'q}\\
\lesssim&\sup_{j_s\in\mathbb{Z}}\sum_{j\leq j_s}\sup_{\lambda>0}\lambda^q|\{x:u_{j,j_s}(x)>\lambda\}|^{\frac qp}2^{2(j-j_s)m'q}2^{j(\frac np-1)q}   .
\end{align*}
Hence
\begin{align*}
&\sup_{j_t\in\mathbb{Z}}\sum_{j_s\geq j_t-1}\sum_{j'\leq j_t}\sum_{j\leq
j_s-1}(\sup_{\lambda>0}\lambda^p|\{x:u_{j,j_s}(x)>\lambda\}|\\&\times2^{2(j'-j_t)m'p}2^{nj'}2^{-jp}2^{2p(j-j_s)(1-m')}2^{\delta (j-j')p+\delta'(j_s-j)p})^{\frac qp} \\
\lesssim&\sup_{j_t\in\mathbb{Z}}\sum_{j_s\geq j_t-1}\sum_{j\leq
j_s-1}(\sup_{\lambda>0}\lambda^p|\{x:u_{j,j_s}(x)>\lambda\}|\\&\times2^{nj_t}2^{-jp}2^{2p(j-j_s)(1-m')}2^{\delta (j-j_t)p+\delta'(j_s-j)p})^{\frac qp} \\
\lesssim&\sup_{j_t\in\mathbb{Z}}\sum_{j_s\geq j_t-1}\sum_{j\leq j_s-1}\sup_{\lambda>0}\lambda^q|\{x:u_{j,j_s}(x)>\lambda\}|^{\frac qp}2^{2(j-j_s)m'q}2^{j(\frac
np-1)q}\\&\times2^{(\frac np-\delta)q(j_t-j_s)}2^{q(j-j_s)(2-4m'-\frac np)}2^{\delta (j-j_s)q+\delta'(j_s-j)q} \\
\lesssim&\sup_{j_s\in\mathbb{Z}}\sum_{j\leq j_s}\sup_{\lambda>0}\lambda^q|\{x:u_{j,j_s}(x)>\lambda\}|^{\frac qp}2^{2(j-j_s)m'q}2^{j(\frac np-1)q}  .
\end{align*}
For $q>p$, apply H\"older inequality to $j$ and $j_s$. For $0<\delta''<n-\delta p$, we have
\begin{align*}
\sup_{j_t\in\mathbb{Z}}\sum_{j'\leq j_t}(A^{p, m',1}_{j',j_t})^q\lesssim&\sup_{j_t\in\mathbb{Z}}\sum_{j'\leq j_t}\sum_{j\geq j'-2}\sum_{j_s\geq
j+1}(2^{\delta''j_s}\sup_{\lambda>0}\lambda^p|\{x:u_{j,j_s}(x)>\lambda\}|\\&\times2^{2(j'-j_t)m'p}2^{nj'}2^{-jp}2^{2p(j-j_s)(1-m')}2^{\delta (j-j')p+\delta'(j_s-j)p})^{\frac
qp}\\&\times( \sum_{j\geq j'-2}\sum_{j_s\geq j+1}2^{-\delta''j_s\frac q{q-p}})^{\frac qp-1}\\
\lesssim&\sup_{j_t\in\mathbb{Z}}\sum_{j'\leq j_t}\sum_{j\geq j'-2}\sum_{j_s\geq
j+1}(2^{\delta''(j_s-j')}\sup_{\lambda>0}\lambda^p|\{x:u_{j,j_s}(x)>\lambda\}|\\&\times2^{2(j'-j_t)m'p}2^{nj'}2^{-jp}2^{2p(j-j_s)(1-m')}2^{\delta
(j-j')p+\delta'(j_s-j)p})^{\frac qp}.
\end{align*}
Since that $\delta''<n-\delta p$, similarly, we have
$$\sup_{j_t\in\mathbb{Z}}\sum_{j'\leq j_t}(A^{p, m',1}_{j',j_t})^q\lesssim\sup_{j_s\in\mathbb{Z}}\sum_{j\leq
j_s}\sup_{\lambda>0}\lambda^q|\{x:u_{j,j_s}(x)>\lambda\}|^{\frac qp}2^{2(j-j_s)m'q}2^{j(\frac np-1)q}.$$

%%%%%%%%%%%%%%%%%%
%%%%%%%%%%%%%%%%%%
%%%%%%%%%%%%%%%%%%%%%

%%%%%%%%%%%%%
%%%%%%%%%%%%
%%%%%%%%%%%%%%

%%%%%%%%%%%%%cccccccccccccc
%%%%%%%%%%%%%
%%%%%%%%%%%%%%

For the second case, we have
$ |v^{\epsilon}_{j,k}(s)|\leq (s2^{2j})^{-m} 2^{(1-\frac{n}{2})j}.$
\begin{align*}
\vert f^{\epsilon',2}_{j',k'}(t)\vert
\lesssim &   \int^{\frac t2}_{2^{-2j}}  \sum\limits_{j\geq  j'-2 } \sum\limits_{\epsilon,\tilde{\epsilon}, k,k''}
\left\vert u^{\epsilon}_{j,k}(s)\right\vert
\left(1+\vert k-k''\vert\right)^{-N} \\
&\times \left(1+ \left\vert2^{j'-j}k-k'\right\vert\right)^{-N} 2^{(\frac{n}{2}+1)j'} (s2^{2j})^{-m} 2^{(1-\frac{n}{2})j}ds\\
\lesssim &  \sum\limits_{j\geq  j'-2 }\int^{\frac t2}_{2^{-2j}}   \sum\limits_{\epsilon,k}
\frac{\left\vert u^{\epsilon}_{j,k}(s)\right\vert}
{\left(1+ \left\vert2^{j'-j}k-k'\right\vert\right)^{N} }  (s2^{2j})^{-m} 2^{(1+\frac{n}{2})(j'-j)} \times2^{2j}  ds  .
\end{align*}
By lemma \ref{le:2.6}, we have
\begin{align*}
f^2_{j',j_t}(x)=&\sup_{2^{-2j_t}\leq t<2^{2-2j_t}}2^{\frac{n}{2}j'} \sum\limits_{(\epsilon',k')\in \Gamma}|f^{\epsilon',2}_{j',k'}(t)|\chi(2^{j'}x-k')\\
\lesssim &   \sup_{2^{-2j_t}\leq t<2^{2-2j_t}} \sum\limits_{(\epsilon',k')\in \Gamma} \sum\limits_{j\geq  j'-2 }\int^{\frac t2}_{2^{-2j}} 2^{\frac
n2(j'-j)}2^{2j}\sum\limits_{\epsilon,k}
\frac{2^{\frac n2j}\left\vert u^{\epsilon}_{j,k}(s)\right\vert}
{\left(1+ \left\vert2^{j'-j}k-k'\right\vert\right)^{N} } \\&\times2^{(\frac n2+1)(j'-j)} (s2^{2j})^{-m}
 ds\chi(2^{j'}x-k')\\
\lesssim & \sup_{2^{-2j_t}\leq t<2^{2-2j_t}} \sum\limits_{j\geq  j'-2 } \sum_{\frac{1-\log_2t}{2} \leq j_s< j+1}
\int^{2^{2-2j_s}}_{2^{-2j_s}}
2^{j'+j}(s2^{2j})^{-m}ds\\
&\times \sum\limits_{(\epsilon',k')\in \Gamma}\sum\limits_{\epsilon,k}
\frac{2^{\frac n2j} (u^{\epsilon}_{j,k})_{j_s}}
{\left(1+ \left\vert2^{j'-j}k-k'\right\vert\right)^{N} }\chi(2^{j'}x-k')\\
\lesssim &   \sum\limits_{j\geq  j'-2 }\sum\limits_{j_t\leq j_s\leq j}M(u_{j,j_s})(x)2^{j'-j}2^{2(j-j_s)(1-m)} .
\end{align*}
In this case, for $0<\delta<\frac np$
\begin{align*}
&\sup_{j_t\in\mathbb{Z}}\sum_{j'\leq j_t}(A^{p, m',2}_{j',j_t})^q\\=&\sup_{j_t\in\mathbb{Z}}\sum_{j'\leq j_t}2^{2(j'-j_t)m'q}2^{j'q(\frac
np-1)}\sup_{\lambda>0}\lambda^q|\{x:f^2_{j',j_t}(x)>\lambda\}|^{\frac qp}\\
\lesssim & \sup_{j_t\in\mathbb{Z}}\sum_{j'\leq j_t}2^{2(j'-j_t)m'q}2^{j'q(\frac np-1)}\sup_{\lambda>0}\lambda^q[\sum_{j\geq j'-2}\sum_{j_t\leq j_s\leq
j}\\&|\{x:M(u_{j,j_s})(x)2^{j'-j}2^{2(j-j_s)(1-m)}>\frac{2^{-\delta j}}{\sum_{j\geq j'-2}2^{-\delta j}(j-j_t+1)}\lambda\}|]^{\frac qp}\\
\lesssim & \sup_{j_t\in\mathbb{Z}}\sum_{j'\leq j_t}[2^{2(j'-j_t)m'p}2^{j'p(\frac np-1)}\sum_{j\geq j'-2}\sum_{j_t\leq j_s\leq
j}\sup_{\lambda>0}\lambda^p|\{x:M(u_{j,j_s})(x)>\lambda\}| \\&2^{(j'-j)p}2^{2p(j-j_s)(1-m)}(\frac{\sum_{j\geq j'-2}2^{-\delta j}(j-j_t+1)}{2^{-\delta j}})^p]^{\frac qp}\\
\lesssim &  \sup_{j_t\in\mathbb{Z}}\sum_{j'\leq j_t}[\sum_{j\geq j'-2}\sum_{j_t\leq j_s\leq
j}\sup_{\lambda>0}\lambda^p|\{x:M(u_{j,j_s})(x)>\lambda\}|\\&\times2^{2(j'-j_t)m'p}2^{nj'}2^{-jp}2^{2p(j-j_s)(1-m)}2^{\delta (j-j')p}(j-j_t+1)^p]^{\frac qp}\\
\lesssim &  \sup_{j_t\in\mathbb{Z}}\sum_{j'\leq j_t}[\sum_{j\geq j'-2}\sum_{j_t\leq j_s\leq j}\sup_{\lambda>0}\lambda^p|\{x:u_{j,j_s}(x)>\lambda\}|\\&\times
2^{2(j'-j_t)m'p}2^{nj'}2^{-jp}2^{2p(j-j_s)(1-m)}2^{\delta (j-j')p}(j-j_t+1)^p]^{\frac qp}  .
\end{align*}
\par For $q\leq p$, we can get
\begin{align*}
&\sup_{j_t\in\mathbb{Z}}\sum_{j'\leq j_t}(A^{p, m',2}_{j',j_t})^q\\\lesssim&\sup_{j_t\in\mathbb{Z}}\sum_{j'\leq j_t}\sum_{j\geq j'-2}\sum_{j_t\leq j_s\leq
j}(\sup_{\lambda>0}\lambda^p|\{x:u_{j,j_s}(x)>\lambda\}| \\&\times2^{2(j'-j_t)m'p}2^{nj'}2^{-jp}2^{2p(j-j_s)(1-m)}2^{\delta (j-j')p}(j-j_t+1)^p)^{\frac qp} \\
\lesssim&\sup_{j_t\in\mathbb{Z}}\sum_{j_s\geq j_t}\sum_{j'\leq j_t}\sum_{j\geq j_s}(\sup_{\lambda>0}\lambda^p|\{x:u_{j,j_s}(x)>\lambda\}| \\&\times
2^{2(j'-j_t)m'p}2^{nj'}2^{-jp}2^{2p(j-j_s)(1-m)}2^{\delta (j-j')p}(j-j_t+1)^p)^{\frac qp}  .
\end{align*}
Since $\delta<\frac np$, we have
\begin{align*}
&\sup_{j_t\in\mathbb{Z}}\sum_{j'\leq j_t}(A^{p, m',2}_{j',j_t})^q\\
\lesssim&\sup_{j_t\in\mathbb{Z}}\sum_{j_s\geq j_t}\sum_{j\geq j_s}(\sup_{\lambda>0}\lambda^p|\{x:u_{j,j_s}(x)>\lambda\}|\\&\times 2^{nj_t}2^{-jp}2^{2p(j-j_s)(1-m)}2^{\delta
(j-j_t)p}(j-j_t+1)^p)^{\frac qp} \\
\lesssim&\sup_{j_t\in\mathbb{Z}}\sum_{j_s\geq j_t}\sum_{j\geq j_s}\sup_{\lambda>0}\lambda^q|\{x:u_{j,j_s}(x)>\lambda\}|^{\frac qp}2^{2(j-j_s)mq}2^{j(\frac
np-1)q}\\&\times2^{(\frac np-\delta)q(j_t-j_s)}(\frac{j-j_t+1}{j-j_s+1})^q2^{q(j-j_s)(2-4m-\frac np)}2^{\delta (j-j_s)q}(j-j_s+1)^q \\
\lesssim&\sup_{j_s\in\mathbb{Z}}\sum_{j\geq j_s}\sup_{\lambda>0}\lambda^q|\{x:u_{j,j_s}(x)>\lambda\}|^{\frac qp}2^{2(j-j_s)mq}2^{j(\frac np-1)q}  .
\end{align*}

For $q>p$, apply H\"older inequality to $j$. For $0<\delta'<n-\delta p$, we have
\begin{align*}
\sup_{j_t\in\mathbb{Z}}\sum_{j'\leq j_t}(A^{p, m',2}_{j',j_t})^q\lesssim&\sup_{j_t\in\mathbb{Z}}\sum_{j'\leq j_t}\sum_{j\geq j'-2}(\sum_{j_t\leq j_s\leq
j}2^{\delta'j}\sup_{\lambda>0}\lambda^p|\{x:u_{j,j_s}(x)>\lambda\}| \\&\times2^{2(j'-j_t)m'p}2^{nj'}2^{-jp}2^{2p(j-j_s)(1-m)}2^{\delta (j-j')p}(j-j_t+1)^p)^{\frac
qp}\\&\times( \sum_{j\geq j'-2}2^{-\delta'j\frac q{q-p}})^{\frac qp-1}\\
\lesssim&\sup_{j_t\in\mathbb{Z}}\sum_{j'\leq j_t}\sum_{j\geq j'-2}\sum_{j_t\leq j_s\leq
j}(2^{\delta'(j-j')}\sup_{\lambda>0}\lambda^p|\{x:u_{j,j_s}(x)>\lambda\}|\\&\times2^{2(j'-j_t)m'p}2^{nj'}2^{-jp}2^{2p(j-j_s)(1-m)}2^{\delta (j-j')p}(j-j_t+1)^{p+\lvert1-\frac
pq\rvert})^{\frac qp}.
\end{align*}
Since $\delta'<n-\delta p$, similarly, we have
$$\sup_{j_t\in\mathbb{Z}}\sum_{j'\leq j_t}(A^{p, m',2}_{j',j_t})^q\lesssim\sup_{j_s\in\mathbb{Z}}\sum_{j\geq
j_s}\sup_{\lambda>0}\lambda^q|\{x:u_{j,j_s}(x)>\lambda\}|^{\frac qp}2^{2(j-j_s)mq}2^{j(\frac np-1)q}.$$

%%%%%%%%%%%%%%%%
%%%%%%%%%%%%%%%%%%%%
%%%%%%%%%%%%%%%%%%%%%%%%%%

%%%%%%%%%%%%%%%ccccccccccc
%%%%%%%%%%%%%%%
%%%%%%%%%%%%%%
%%%%%%%%%%%%%%%%%%

For the third case, just as we proved for the second case, we have also
\begin{align*}
&f^3_{j',j_t}(x)\\\lesssim &   \sup_{2^{-2j_t}\leq t<2^{2-2j_t}} \sum\limits_{(\epsilon',k')\in \Gamma}\sum\limits_{j\geq  j'-2 } \int^t_{\frac t2}
2^{j'-j}2^{2j}\\&\sum\limits_{\epsilon,k}
\frac{2^{\frac n2j}\left\vert u^{\epsilon}_{j,k}(s)\right\vert}
{\left(1+ \left\vert2^{j'-j}k-k'\right\vert\right)^{N} }(s2^{2j})^{-m} ds\chi(2^{j'}x-k')\\
\lesssim& \sup_{2^{-2j_t}\leq t<2^{2-2j_t}} \sum\limits_{j\geq  j'-2 } \sum_{\frac{-\log_2t}{2} \leq j_s< \frac{5-\log_2t}{2}}\int^{2^{2-2j_s}}_{2^{-2j_s}}
2^{j'+j}(s2^{2j})^{-m}dsM(u_{j,j_s})(x)\\
\lesssim&\sum_{j\geq  j'-2 } \sum_{j_t \leq j_s\leq j_t+1}
2^{j+j'-2j_s}2^{2m(j_s-j)}M(u_{j,j_s})(x) .
\end{align*}
For $0<\delta<2m'+\frac np$
\begin{align*}
&\sup_{j_t\in\mathbb{Z}}\sum_{j'\leq j_t}(A^{p, m',3}_{j',j_t})^q\\=&\sup_{j_t\in\mathbb{Z}}\sum_{j'\leq j_t}2^{2(j'-j_t)m'q}2^{j'q(\frac
np-1)}\sup_{\lambda>0}\lambda^q|\{x:f^3_{j',j_t}(x)>\lambda\}|^{\frac qp}\\
\lesssim & \sup_{j_t\in\mathbb{Z}}\sum_{j'\leq j_t}2^{2(j'-j_t)m'q}2^{j'q(\frac np-1)}\sup_{\lambda>0}\lambda^q[\sum_{j\geq j'-2}\sum_{j_t\leq j_s\leq
j_t+1}|\{x:M(u_{j,j_s})(x) 2^{j+j'-2j_s}\\&2^{2(j_s-j)m} >\frac{2^{-\delta j}}{\sum_{j\geq j'-2}2^{-\delta j}}\lambda\}|]^{\frac qp}\\
\lesssim & \sup_{j_t\in\mathbb{Z}}\sum_{j'\leq j_t}[2^{2(j'-j_t)m'p}2^{j'p(\frac np-1)}\sum_{j\geq j'-2}\sum_{j_t\leq j_s\leq
j_t+1}\sup_{\lambda>0}\lambda^p|\{x:M(u_{j,j_s})(x)>\lambda\}| \\&2^{(j+j'-2j_s)p}2^{2p(j_s-j)m}(\frac{\sum_{j\geq j'-2}2^{-\delta j}}{2^{-\delta j}})^p]^{\frac qp}\\
\lesssim &  \sup_{j_t\in\mathbb{Z}}\sum_{j'\leq j_t}[\sum_{j\geq j'-2}\sum_{j_t\leq j_s\leq
j_t+1}\sup_{\lambda>0}\lambda^p|\{x:(Mu_{j})_{j_s}(x)>\lambda\}|2^{2(j'-j_t)m'p}2^{(n-p)j'}\\&\times2^{(j+j'-2j_s)p}2^{2p(j_s-j)m}2^{\delta (j-j')p}]^{\frac qp}\\
\lesssim &  \sup_{j_t\in\mathbb{Z}}\sum_{j'\leq j_t}[\sum_{j\geq j'-2}\sum_{j_t\leq j_s\leq j_t+1}\sup_{\lambda>0}\lambda^p|\{x:u_{j,j_s}(x)>\lambda\}|
2^{2(j'-j_t)m'p}2^{(n-p)j'}\\&\times2^{(j+j'-2j_s)p}2^{2p(j_s-j)m}2^{\delta (j-j')p}]^{\frac qp}\\
\lesssim &  \sup_{j_s\in\mathbb{Z}}\sum_{j'\leq j_s}[\sum_{j\geq j'-2}\sup_{\lambda>0}\lambda^p|\{x:u_{j,j_s}(x)>\lambda\}|
2^{2(j'-j_s)m'p}2^{(n-p)j'}\\&\times2^{(j+j'-2j_s)p}2^{2p(j_s-j)m}2^{\delta (j-j')p}]^{\frac qp}  .
\end{align*}
For $q\leq p$, because $\delta<2m'+\frac np$, since $j\geq j_t$ and $j\geq j_s-1$, we have
 \begin{align*}
&\sup_{j_t\in\mathbb{Z}}\sum_{j'\leq j_t}(A^{p, m',3}_{j',j_t})^q\\
\lesssim&\sup_{j_s\in\mathbb{Z}}\sum_{j\geq j_s-2}\sum_{j'\leq j_s}[\sup_{\lambda>0}\lambda^p|\{x:u_{j,j_s}(x)>\lambda\}|2^{2(j'-j_s)m'p}2^{(n-p)j'}2^{(j+j'-2j_s)p}\\&\times
2^{2p(j_s-j)m}2^{\delta (j-j')p}]^{\frac qp}\\
\lesssim&\sup_{j_s\in\mathbb{Z}}\sum_{j\geq j_s-2}\sup_{\lambda>0}\lambda^q|\{x:u_{j,j_s}(x)>\lambda\}|^{\frac qp}2^{2qm(j-j_s)}2^{j(\frac np-1)q}2^{(n-2p+4mp-\delta
p)(j_s-j)\frac qp}\\
\lesssim&\sup_{j_s\in\mathbb{Z}}\sum_{j\geq j_s}\sup_{\lambda>0}\lambda^q|\{x:u_{j,j_s}(x)>\lambda\}|^{\frac qp}2^{2qm(j-j_s)}2^{j(\frac np-1)q}  .
\end{align*}
For $q>p$, use H\"older inequality to $j$. For $0<\delta'<2m'p+n-\delta p$, we have
\begin{align*}
\sup_{j_t\in\mathbb{Z}}\sum_{j'\leq j_t}(A^{p, m',3}_{j',j_t})^q
\lesssim &  \sup_{j_s\in\mathbb{Z}}\sum_{j'\leq j_s}\sum_{j\geq j'-2}[2^{\delta'(j-j')}\sup_{\lambda>0}\lambda^p|\{x:u_{j,j_s}(x)>\lambda\}|
2^{2(j'-j_s)m'p}\\&\times2^{(n-p)j'}2^{(j+j'-j_s)p}2^{2p(j_s-j)m}2^{\delta (j-j')p}]^{\frac qp}  .
\end{align*}
Since that $\delta'<2m'p+n-\delta p$, we have
\begin{align*}
\sup_{j_t\in\mathbb{Z}}\sum_{j'\leq j_t}(A^{p, m',3}_{j',j_t})^q
\lesssim&\sup_{j_s\in\mathbb{Z}}\sum_{j\geq j_s}\sup_{\lambda>0}\lambda^q|\{x:u_{j,j_s}(x)>\lambda\}|^{\frac qp}2^{2qm(j-j_s)}2^{j(\frac np-1)q}   .
\end{align*}

%%%%%%%%%%%%%%%%%%
%%%%%%%%%%%%%%%%%%
%%%%%%%%%%%%%%%%%%%%%

%%%%%%%%%%%%%%%
%%%%%%%%%%%%%
%%%%%%%%%%%%%%%
%%%%%%%%%%%%%%%
%%%%%%%%%%%%%%

Then we consider the indices where $t2^{2j'}\geq 1$.
We write the integration $\int^{t}_{0}$ as the sum of three terms:
$\int^{2^{-2j}}_{0}$, $\int^{\frac{t}{2}}_{2^{-2j}}$ and $\int^{t}_{\frac{t}{2}}$.
For the first case, we have
$ |v^{\epsilon}_{j,k}(s)|\leq (s2^{2j})^{-m'} 2^{(1-\frac{n}{2})j}.$
As we did in the case of $t2^{2j'}\leq1$, we have
\begin{align*}
f^1_{j',j_t}(x)=&\sup_{2^{-2j_t}\leq t<2^{2-2j_t}}f^1_{j'}(t,x)=\sup_{2^{-2j_t}\leq t<2^{2-2j_t}}2^{\frac{n}{2}j'} \sum\limits_{(\epsilon',k')\in
\Gamma}|f^{\epsilon',1}_{j',k'}(t)|\chi(2^{j'}x-k')\\
\lesssim &   \sum\limits_{j\geq  j'-2 }\sum\limits_{j_s\geq j+1}e^{-\tilde{c}2^{2(j'-j_t)}} M(u_{j,j_s})(x)2^{j'-j}2^{2(j-j_s)(1-m')}.
\end{align*}
For $s\leq2^{-2j}$, let $A^{p,m,1}_{j',j_t}=2^{2(j-j_t)m}2^{j(\frac np-1)}\sup_{\lambda>0}\lambda |\{x:f^1_{j,j_t}(x)>\lambda\}|^{\frac 1p}$.
That is to say, for $\frac np+4m'-2<\delta<\frac np$, $0<\delta'<2-4m'+\delta-\frac np,$
\begin{align*}
\sup_{j_t\in\mathbb{Z}}\sum_{j'\geq j_t}(A^{p, m,1}_{j',j_t})^q=&\sup_{j_t\in\mathbb{Z}}\sum_{j'\geq j_t}2^{2(j'-j_t)mq}2^{j'q(\frac
np-1)}\sup_{\lambda>0}\lambda^q|\{x:f^1_{j',j_t}(x)>\lambda\}|^{\frac qp}\\
\lesssim &  \sup_{j_t\in\mathbb{Z}}\sum_{j'\geq j_t}[\sum_{j\geq j'-2}\sum_{j_s\geq j+1}\sup_{\lambda>0}\lambda^p|\{x:u_{j,j_s}(x)>\lambda\}|\\&\times
e^{-\tilde{c}p2^{2(j'-j_t)}}2^{2(j'-j_t)mp}2^{nj'}2^{-jp}2^{2p(j-j_s)(1-m')}2^{\delta (j-j')p+\delta'(j_s-j)p}]^{\frac qp}  .
\end{align*}
\par For $q\leq p$,
\begin{align*}
\sup_{j_t\in\mathbb{Z}}\sum_{j'\geq j_t}(A^{p, m,1}_{j',j_t})^q\lesssim&\sup_{j_t\in\mathbb{Z}}\sum_{j'\geq j_t}\sum_{j\geq j'-2}\sum_{j_s\geq
j+1}(\sup_{\lambda>0}\lambda^p|\{x:u_{j,j_s}(x)>\lambda\}|\\&\times e^{-\tilde{c}p2^{2(j'-j_t)}}2^{2(j'-j_t)mp}2^{nj'}2^{-jp}2^{2p(j-j_s)(1-m')}2^{\delta
(j-j')p+\delta'(j_s-j)p})^{\frac qp} \\
\lesssim&\sup_{j_t\in\mathbb{Z}}\sum_{j'\geq j_t}\sum_{j_s\geq j'-1}\sum_{j'-2\leq j\leq j_s-1}(\sup_{\lambda>0}\lambda^p|\{x:u_{j,j_s}(x)>\lambda\}|\\&\times
e^{-\tilde{c}p2^{2(j'-j_t)}}2^{2(j'-j_t)mp}2^{nj'}2^{-jp}2^{2p(j-j_s)(1-m')}2^{\delta (j-j')p+\delta'(j_s-j)p})^{\frac qp}\\
\lesssim&\sup_{j_t\in\mathbb{Z}}\sum_{j_s\geq j_t-1}\sum_{j_t\leq j'\leq j_s+1}\sum_{j'-2\leq j\leq j_s-1}(\sup_{\lambda>0}\lambda^p|\{x:u_{j,j_s}(x)>\lambda\}|\\&\times
e^{-\tilde{c}p2^{2(j'-j_t)}}2^{2(j'-j_t)mp}2^{nj'}2^{-jp}2^{2p(j-j_s)(1-m')}2^{\delta (j-j')p+\delta'(j_s-j)p})^{\frac qp}\\
\lesssim&\sup_{j_t\in\mathbb{Z}}\sum_{j_s\geq j_t-1}\sum_{j_t-2\leq j\leq j_s-1}\sum_{j_t\leq j'\leq j+2}(\sup_{\lambda>0}\lambda^p|\{x:u_{j,j_s}(x)>\lambda\}|\\&\times
e^{-\tilde{c}p2^{2(j'-j_t)}}2^{2(j'-j_t)mp}2^{nj'}2^{-jp}2^{2p(j-j_s)(1-m')}2^{\delta (j-j')p+\delta'(j_s-j)p})^{\frac qp}  .
\end{align*}
Since $2-4m'>\delta'+\frac np-\delta$ and $\delta<\frac np$, we have
\begin{align*}
&\sup_{j_t\in\mathbb{Z}}\sum_{j_s\geq j_t-1}\sum_{j_t-2\leq j\leq j_s-1}\sum_{j_t\leq j'\leq j+2}(\sup_{\lambda>0}\lambda^p|\{x:u_{j,j_s}(x)>\lambda\}|\\&\times
e^{-\tilde{c}p2^{2(j'-j_t)}}2^{2(j'-j_t)mp}2^{nj'}2^{-jp}2^{2p(j-j_s)(1-m')}2^{\delta (j-j')p+\delta'(j_s-j)p})^{\frac qp} \\
\lesssim&\sup_{j_t\in\mathbb{Z}}\sum_{j_s\geq j_t-1}\sum_{j_t-2\leq j\leq j_s-1}(\sup_{\lambda>0}\lambda^p|\{x:u_{j,j_s}(x)>\lambda\}|\\&\times
2^{nj_t}2^{-jp}2^{2p(j-j_s)(1-m')}2^{\delta'(j_s-j)p}2^{\delta p(j-j_t)})^{\frac qp} \\
\lesssim&\sup_{j_t\in\mathbb{Z}}\sum_{j_s\geq j_t-1}\sum_{j\leq
j_s-1}(\sup_{\lambda>0}\lambda^p|\{x:u_{j,j_s}(x)>\lambda\}|2^{2p(j-j_s)m'}2^{j(n-p)}\\&\times2^{(j_t-j_s)(n-\delta p)}2^{(j-j_s)p(2-4m'-\delta'-\frac np+\delta )})^{\frac qp}
\\
\lesssim&\sup_{j_s\in\mathbb{Z}}\sum_{j\leq j_s}\sup_{\lambda>0}\lambda^q|\{x:u_{j,j_s}(x)>\lambda\}|^{\frac qp}2^{2(j-j_s)m'q}2^{j(\frac np-1)q}   .
\end{align*}
For $q>p$, use H\"older inequality to $j$ and $j_s$. For $0<\delta''<n-\delta p$, we have
\begin{align*}
&\sup_{j_t\in\mathbb{Z}}\sum_{j'\leq j_t}(A^{p, m,1}_{j',j_t})^q\\\lesssim&\sup_{j_t\in\mathbb{Z}}\sum_{j'\leq j_t}\sum_{j\geq j'-2}\sum_{j_s\geq
j+1}(2^{\delta''j_s}\sup_{\lambda>0}\lambda^p|\{x:u_{j,j_s}(x)>\lambda\}|\\&\times e^{-\tilde{c}p2^{2(j'-j_t)}}2^{2(j'-j_t)m'p}2^{nj'}2^{-jp}2^{2p(j-j_s)(1-m')}2^{\delta
(j-j')p+\delta'(j_s-j)p})^{\frac qp}\\&\times( \sum_{j\geq j'-2}\sum_{j_s\geq j+1}2^{-\delta''j_s\frac q{q-p}})^{\frac qp-1}\\
\lesssim&\sup_{j_t\in\mathbb{Z}}\sum_{j'\leq j_t}\sum_{j\geq j'-2}\sum_{j_s\geq j+1}(2^{\delta''(j_s-j')}\sup_{\lambda>0}\lambda^p|\{x:u_{j,j_s}(x)>\lambda\}|\\&\times
e^{-\tilde{c}p2^{2(j'-j_t)}}2^{2(j'-j_t)m'p}2^{nj'}2^{-jp}2^{2p(j-j_s)(1-m')}2^{\delta (j-j')p+\delta'(j_s-j)p})^{\frac qp}.
\end{align*}
Because $\delta''<n-\delta p$, similarly, we have
$$\sup_{j_t\in\mathbb{Z}}\sum_{j'\leq j_t}(A^{p, m,1}_{j',j_t})^q\lesssim\sup_{j_s\in\mathbb{Z}}\sum_{j\leq
j_s}\sup_{\lambda>0}\lambda^q|\{x:u_{j,j_s}(x)>\lambda\}|^{\frac qp}2^{2(j-j_s)m'q}2^{j(\frac np-1)q}.$$

%%%%%%%%%%%%%%%%%%
%%%%%%%%%%%%%%%%%%
%%%%%%%%%%%%%%%%%%%%%

%%%%%%%%%%%%%
%%%%%%%%%%%%
%%%%%%%%%%%%%%

%%%%%%%%%%%%%%cccccccccccc
%%%%%%%%%%%%%
%%%%%%%%%%%%%%%%
For the second case, we have
$ |v^{\epsilon}_{j,k}(s)|\leq (s2^{2j})^{-m} 2^{(1-\frac{n}{2})j}$.
By lemma \ref{le:2.6}, we have
\begin{align*}
f^2_{j',j_t}(x)=&\sup_{2^{-2j_t}\leq t<2^{2-2j_t}}2^{\frac{n}{2}j'} \sum\limits_{(\epsilon',k')\in \Gamma}|f^{\epsilon',2}_{j',k'}(t)|\chi(2^{j'}x-k')\\
\lesssim &   \sum\limits_{j\geq  j'-2 }\sum\limits_{j_t\leq j_s\leq j} e^{-\tilde{c}2^{2(j'-j_t)}}M(u_{j,j_s})(x)2^{j'-j}2^{2(j-j_s)(1-m)} .
\end{align*}
In this case, for $0<\delta<\frac np$
\begin{align*}
&\sup_{j_t\in\mathbb{Z}}\sum_{j'\geq j_t}(A^{p, m,2}_{j',j_t})^q\\=&\sup_{j_t\in\mathbb{Z}}\sum_{j'\geq j_t}2^{2(j'-j_t)mq}2^{j'q(\frac
np-1)}\sup_{\lambda>0}\lambda^q|\{x:f^2_{j',j_t}(x)>\lambda\}|^{\frac qp}\\
\lesssim &  \sup_{j_t\in\mathbb{Z}}\sum_{j'\geq j_t}[\sum_{j\geq j'-2}\sum_{j_t\leq j_s\leq j}\sup_{\lambda>0}\lambda^p|\{x:u_{j,j_s}(x)>\lambda\}|\\&\times
e^{-\tilde{c}p2^{2(j'-j_t)}}2^{2(j'-j_t)mp}2^{nj'}2^{-jp}2^{2p(j-j_s)(1-m)}2^{\delta (j-j')p}(j-j_t+1)^p]^{\frac qp}  .
\end{align*}
\par For $q\leq p$, according to lemma \ref{le:2.4},
\begin{align*}
\sup_{j_t\in\mathbb{Z}}\sum_{j'\geq j_t}(A^{p, m,2}_{j',j_t})^q\lesssim&\sup_{j_t\in\mathbb{Z}}\sum_{j'\geq j_t}\sum_{j\geq j'-2}\sum_{j_t\leq j_s\leq
j}(\sup_{\lambda>0}\lambda^p|\{x:u_{j,j_s}(x)>\lambda\}| e^{-\tilde{c}p2^{2(j'-j_t)}}\\&\times2^{2(j'-j_t)mp}2^{nj'}2^{-jp}2^{2p(j-j_s)(1-m)}2^{\delta
(j-j')p}(j-j_t+1)^p)^{\frac qp} \\
\lesssim&\sup_{j_t\in\mathbb{Z}}\sum_{j_s\geq j_t}\sum_{j'\geq j_t}\sum_{j\geq j_s}(\sup_{\lambda>0}\lambda^p|\{x:u_{j,j_s}(x)>\lambda\}| e^{-\tilde{c}p2^{2(j'-j_t)}}\\&\times
2^{2(j'-j_t)mp}2^{nj'}2^{-jp}2^{2p(j-j_s)(1-m)}2^{\delta (j-j')p}(j-j_t+1)^p)^{\frac qp}  .
\end{align*}
Since $\delta<\frac np$, we have
\begin{align*}
\sup_{j_t\in\mathbb{Z}}\sum_{j'\geq j_t}(A^{p, m,2}_{j',j_t})^q
\lesssim&\sup_{j_t\in\mathbb{Z}}\sum_{j_s\geq j_t}\sum_{j\geq j_s}(\sup_{\lambda>0}\lambda^p|\{x:u_{j,j_s}(x)>\lambda\}|\\&\times 2^{nj_t}2^{-jp}2^{2p(j-j_s)(1-m)}2^{\delta
(j-j_t)p}(j-j_t+1)^p)^{\frac qp} \\
\lesssim&\sup_{j_t\in\mathbb{Z}}\sum_{j_s\geq j_t}\sum_{j\geq j_s}\sup_{\lambda>0}\lambda^q|\{x:u_{j,j_s}(x)>\lambda\}|^{\frac qp}2^{2(j-j_s)mq}2^{j(\frac
np-1)q}\\&\times2^{(\frac np-\delta)q(j_t-j_s)}(\frac{j-j_t+1}{j-j_s+1})^q2^{q(j-j_s)(2-4m-\frac np)}2^{\delta (j-j_s)q}(j-j_s+1)^q \\
\lesssim&\sup_{j_s\in\mathbb{Z}}\sum_{j\geq j_s}\sup_{\lambda>0}\lambda^q|\{x:u_{j,j_s}(x)>\lambda\}|^{\frac qp}2^{2(j-j_s)mq}2^{j(\frac np-1)q}  .
\end{align*}

For $q>p$, we apply H\"older inequality to $j$. For $0<\delta'<n-\delta p$, we have
\begin{align*}
\sup_{j_t\in\mathbb{Z}}\sum_{j'\leq j_t}(A^{p, m,2}_{j',j_t})^q\lesssim&\sup_{j_t\in\mathbb{Z}}\sum_{j'\geq j_t}\sum_{j\geq j'-2}(\sum_{j_t\leq j_s\leq
j}2^{\delta'j}\sup_{\lambda>0}\lambda^p|\{x:u_{j,j_s}(x)>\lambda\}| e^{-\tilde{c}p2^{2(j'-j_t)}}\\&\times2^{2(j'-j_t)m'p}2^{nj'}2^{-jp}2^{2p(j-j_s)(1-m)}2^{\delta
(j-j')p}(j-j_t+1)^p)^{\frac qp}\\&\times( \sum_{j\geq j'-2}2^{-\delta'j\frac q{q-p}})^{\frac qp-1}\\
\lesssim&\sup_{j_t\in\mathbb{Z}}\sum_{j'\geq j_t}\sum_{j\geq j'-2}\sum_{j_t\leq j_s\leq
j}(2^{\delta'(j-j')}\sup_{\lambda>0}\lambda^p|\{x:u_{j,j_s}(x)>\lambda\}|\\&\times2^{2(j'-j_t)m'p}2^{nj'}2^{-jp}2^{2p(j-j_s)(1-m)}2^{\delta (j-j')p}(j-j_t+1)^{p+\lvert1-\frac
pq\rvert})^{\frac qp}.
\end{align*}
For $\delta'<n-\delta p$, similarly, we have
$$\sup_{j_t\in\mathbb{Z}}\sum_{j'\geq j_t}(A^{p, m,2}_{j',j_t})^q\lesssim\sup_{j_s\in\mathbb{Z}}\sum_{j\geq
j_s}\sup_{\lambda>0}\lambda^q|\{x:u_{j,j_s}(x)>\lambda\}|^{\frac qp}2^{2(j-j_s)mq}2^{j(\frac np-1)q}.$$

%%%%%%%%%%%%%%%%
%%%%%%%%%%%%%%%%%%%%
%%%%%%%%%%%%%%%%%%%%%%%%%%

%%%%%%%%%%%%%%%ccccccccccccc
%%%%%%%%%%%%%
%%%%%%%%%%%%%%%
%%%%%%%%%%%%%%%
%%%%%%%%%%%%%

For the third case, just as we proved the case of $t2^{2j'}\leq1$, we can also get
\begin{align*}
f^3_{j',j_t}(x)
\lesssim\sum\limits_{j\geq  j'-2 } \sum_{j_t \leq j_s\leq j_t+1}2^{j-j'}2^{2m(j_s-j)}M(u_{j,j_s})(x) .
\end{align*}
For $0<\delta<2m-2+\frac np$,
\begin{align*}
&\sup_{j_t\in\mathbb{Z}}\sum_{j'\geq j_t}(A^{p, m,3}_{j',j_t})^q\\=&\sup_{j_t\in\mathbb{Z}}\sum_{j'\geq j_t}2^{2(j'-j_t)mq}2^{j'q(\frac
np-1)}\sup_{\lambda>0}\lambda^q|\{x:f^3_{j',j_t}(x)>\lambda\}|^{\frac qp}\\
\lesssim &  \sup_{j_t\in\mathbb{Z}}\sum_{j'\geq j_t}[\sum_{j\geq j'-2}\sum_{j_t\leq j_s\leq j_t+1}\sup_{\lambda>0}\lambda^p|\{x:u_{j,j_s}(x)>\lambda\}|
2^{2(j'-j_t)mp}2^{(n-p)j'}\\&\times2^{(j-j')p}2^{2p(j_s-j)m}2^{\delta (j-j')p}]^{\frac qp}\\
\lesssim &  \sup_{j_s\in\mathbb{Z}}\sum_{j'\geq j_s}[\sum_{j\geq j'-2}\sup_{\lambda>0}\lambda^p|\{x:u_{j,j_s}(x)>\lambda\}|
2^{2(j'-j_s)mp}2^{(n-p)j'}\\&\times2^{(j-j')p}2^{2p(j_s-j)m}2^{\delta (j-j')p}]^{\frac qp}  .
\end{align*}
For $q\leq p$, because $\delta<2m-2+\frac np$, we have
 \begin{align*}
&\sup_{j_t\in\mathbb{Z}}\sum_{j'\geq j_t}(A^{p, m,3}_{j',j_t})^q\\
\lesssim&\sup_{j_s\in\mathbb{Z}}\sum_{j\geq j_s-2}\sum_{j_s\leq j'\leq j+2}[\sup_{\lambda>0}\lambda^p|\{x:u_{j,j_s}(x)>\lambda\}|\\&\times
2^{2(j'-j_s)mp}2^{(n-p)j'}2^{(j-j')p}2^{2p(j_s-j)m}2^{\delta (j-j')p}]^{\frac qp}\\
\lesssim&\sup_{j_s\in\mathbb{Z}}\sum_{j\geq j_s-2}\sup_{\lambda>0}\lambda^q|\{x:u_{j,j_s}(x)>\lambda\}|^{\frac qp}2^{2qm(j-j_s)}2^{j(\frac np-1)q}2^{2mq(j_s-j)}\\
\lesssim&\sup_{j_s\in\mathbb{Z}}\sum_{j\geq j_s}\sup_{\lambda>0}\lambda^q|\{x:u_{j,j_s}(x)>\lambda\}|^{\frac qp}2^{2qm(j-j_s)}2^{j(\frac np-1)q}   .
\end{align*}
For $q>p$, use H\"older inequality to $j$. For $0<\delta'<2mp-2p+n-\delta p$, we have
\begin{align*}
\sup_{j_t\in\mathbb{Z}}\sum_{j'\geq j_t}(A^{p, m,3}_{j',j_t})^q
\lesssim &  \sup_{j_s\in\mathbb{Z}}\sum_{j'\geq j_s}\sum_{j\geq j'-2}[2^{\delta'(j-j')}\sup_{\lambda>0}\lambda^p|\{x:u_{j,j_s}(x)>\lambda\}|
2^{2(j'-j_s)mp}\\&\times2^{(n-p)j'}2^{(j-j')p}2^{2p(j_s-j)m}2^{\delta (j-j')p}]^{\frac qp}  .
\end{align*}
Since that $\delta'<2mp-2p+n-\delta p$, we have
\begin{align*}
\sup_{j_t\in\mathbb{Z}}\sum_{j'\geq j_t}(A^{p, m,3}_{j',j_t})^q
\lesssim&\sup_{j_s\in\mathbb{Z}}\sum_{j\geq j_s}\sup_{\lambda>0}\lambda^q|\{x:u_{j,j_s}(x)>\lambda\}|^{\frac qp}2^{2qm(j-j_s)}2^{j(\frac np-1)q}   .
\end{align*}

%%%%%%%%%%%%%%%%%%%%%%%%%%%%%%%%%%%%%%%%%%%%%%%%%%%%%%%%%%%%%%%%%%(

%%%%%%%%%%%%%%%%%%%%%% section 6 %%%%%%%%%%%%%%%%%%%%%%%%%%%%%%%%%%

%%%%%%%%%%%%%%%%%%%%%%%%%%%%%%%%%%%%%%%%%%%%%%%%%%%%%%%%%%%%%%%%%%%%%

\section{Bilinear operator $B(u,v)$ and proof of Theorem \ref{mthmain}}\label{sec6}

In this section, we establish the boundedness of the bilinear operator $B(u,v)$
and finally obtain the global wellposedness for initial value in Besov-Lorentz spaces.
We consider first the boundedness of bilinear operator $B(u,v)$.
\begin{theorem}\label{th:bilinear}
The bilinear operator
\begin{equation*}
B(u,v)= \int^{t}_{0} e^{(t-s)\Delta}
\mathbb{P}\nabla (u\otimes v) ds
\end{equation*}
is bounded from $({ ^{m'}_{m} \dot{B}}^{\frac{n}{p}-1, q}_{p,\infty})^{n}\times ({ ^{m'}_{m} \dot{B}}^{\frac{n}{p}-1, q}_{p,\infty})^{n}$
to $({ ^{m'}_{m} \dot{B}}^{\frac{n}{p}-1, q}_{p,\infty})^{n}$.
\end{theorem}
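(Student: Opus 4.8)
The plan is to reduce $B(u,v)$ to a finite linear combination of the paraproduct flows $P_l,P_{l,l',l''}$ and the couple flows $C_l,C_{l,l',l''}$ treated in Theorems \ref{para} and \ref{cou}, and then add up the resulting contributions using the (quasi-)triangle inequality in ${ ^{m'}_{m} \dot{B}}^{\frac{n}{p}-1, q}_{p,\infty}$.

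First I would write out $\mathbb{P}\nabla(u\otimes v)$ exactly as in the Leray projector identity recalled in Section \ref{intro}, namely
$$\mathbb{P}\nabla(u\otimes v)=\sum_{l}\partial_{x_l}(u_l v)+\sum_{l}\sum_{l'}R_l R_{l'}\nabla(u_l v_{l'}),$$
and use $R_l R_{l'}=\partial_{x_l}\partial_{x_{l'}}(-\Delta)^{-1}$. Componentwise, each summand of $B(u,v)=\int_0^t e^{(t-s)\Delta}\mathbb{P}\nabla(u\otimes v)\,ds$ is then of the form $\int_0^t A_l(u_a v_b)\,ds$ or $\int_0^t A_{l,l',l''}(u_a v_b)\,ds$ with $A_l=e^{(t-s)\Delta}\partial_{x_l}$ and $A_{l,l',l''}=e^{(t-s)\Delta}\partial_{x_l}\partial_{x_{l'}}\partial_{x_{l''}}(-\Delta)^{-1}$, which are precisely the operators entering the definitions of the paraproduct and couple flows. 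Next I would apply Bony's decomposition to each scalar product $u_a v_b$, using the frequency localization of the Meyer projections $P_j$, $Q_j$:
$$u_a v_b=\sum_{j}P_{j-2}u_a\,Q_j v_b+\sum_{j}P_{j-2}v_b\,Q_j u_a+\sum_{j}Q_j u_a\,Q_j v_b,$$
where only finitely many $j'$ with $|j-j'|$ bounded interact in each piece, as already exploited in Sections \ref{sec4} and \ref{sec5}. Substituting this exhibits $B(u,v)$ as a finite sum — over $l,l',l''\in\{1,\dots,n\}$, over the components $a,b$, and over the three interaction types — of terms of the shape $P_l(u_a,v_b)$, $P_l(v_b,u_a)$, $P_{l,l',l''}(u_a,v_b)$, $P_{l,l',l''}(v_b,u_a)$, $C_l(u_a,v_b)$ and $C_{l,l',l''}(u_a,v_b)$: the single-derivative terms $\partial_{x_l}(u_l v)$ produce $P_l$ and $C_l$ contributions, while the Riesz terms $R_l R_{l'}\nabla(u_l v_{l'})$ produce $P_{l,l',l''}$ and $C_{l,l',l''}$ contributions.

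Under the hypotheses $1<p<\infty$, $1\leq q\leq\infty$, $0\leq m'<\tfrac12$, $m>1$, Theorems \ref{para} and \ref{cou} give that each of these flows maps $({ ^{m'}_{m} \dot{B}}^{\frac{n}{p}-1, q}_{p,\infty})^{2}$ into ${ ^{m'}_{m} \dot{B}}^{\frac{n}{p}-1, q}_{p,\infty}$; moreover the estimates established in their proofs are genuinely bilinear, since in every pointwise wavelet bound one factor of $u$ (or of $v$) is controlled, via the inclusion ${ ^{m'}_{m} \dot{B}}^{\frac{n}{p}-1, q}_{p,\infty}\subset B_{m,m'}$ and the a priori bounds of Section \ref{sec3.2}, by a single power of $\|u\|$ (resp. $\|v\|$), while the other factor is handled by the corresponding maximal-function/Lorentz estimate. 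Hence $\|P_{l,l',l''}(u_a,v_b)\|+\|C_{l,l',l''}(u_a,v_b)\|\lesssim\|u\|\,\|v\|$ in the ${ ^{m'}_{m} \dot{B}}^{\frac{n}{p}-1, q}_{p,\infty}$-norm, and likewise for the single-derivative flows. Summing the finitely many contributions and using the (quasi-)triangle inequality yields
$$\|B(u,v)\|_{({ ^{m'}_{m} \dot{B}}^{\frac{n}{p}-1, q}_{p,\infty})^{n}}\lesssim\|u\|_{({ ^{m'}_{m} \dot{B}}^{\frac{n}{p}-1, q}_{p,\infty})^{n}}\,\|v\|_{({ ^{m'}_{m} \dot{B}}^{\frac{n}{p}-1, q}_{p,\infty})^{n}},$$
which is the assertion.

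The main obstacle is organizational rather than analytic: one has to check that the Fourier-support conventions used when defining the paraproduct flows (support on a ring, Section \ref{sec4}) and the couple flows (support on a ball, Section \ref{sec5}) coincide with those coming out of Bony's decomposition of $u_a v_b$, so that the integrands $A_l(\cdot)$ and $A_{l,l',l''}(\cdot)$ genuinely fall under Lemma \ref{le:4.2}; and that the implied constants in Theorems \ref{para} and \ref{cou} are uniform over the finitely many indices $l,l',l'',a,b$ and over the $O(1)$ shifts in $|j-j'|$ (in particular the $|j-j'|=1$ off-diagonal pieces of the high-high term, which are estimated exactly like the diagonal one). Given Sections \ref{sec3}--\ref{sec5}, both points are routine, so no new estimate is needed.
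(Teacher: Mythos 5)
Your proposal is correct and follows essentially the same route as the paper's proof: write out the Leray projector, apply the Bony-type decomposition via the wavelet projections $P_j,Q_j$ to reduce each component of $B(u,v)$ to the paraproduct flows $P_l,P_{l,l',l''}$ and couple flows $C_l,C_{l,l',l''}$, then invoke Theorems \ref{para} and \ref{cou} and sum the finitely many contributions. The only cosmetic difference is that the paper's decomposition lists the near-diagonal high-high piece $\sum_{0<|j-j'|\le 2}Q_juQ_{j'}v$ as a separate fourth term while your display folds it in, but you explicitly flag those off-diagonal pieces as handled identically to the diagonal one, so the arguments coincide.
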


\begin{proof}

For $l,l',l''=1,\cdots,n$, define
\begin{equation}\label{61eq}
B_{l}(u,v)= \int^{t}_{0} e^{(t-s)\Delta}\partial_{l}(u(s,x) v(s,x)) ds,
\end{equation}
\begin{equation}\label{62eq}
B_{l,l',l''}(u,v)= \int^{t}_{0} e^{(t-s)\Delta} \partial_{x_l}\partial_{x_{l'}}\partial_{x_{l''}}(-\Delta)^{-1} (u(s,x) v(s,x)) ds.
\end{equation}

We decompose the product of functions $u$ and $v$ into the form of
\begin{equation}\label{eq:decompose}
\begin{array}{rl}
u(t,x)v(t,x)=&\sum_{j\in\mathbb{Z}}Q_juQ_jv+\sum_{0<\lvert j-j^{\prime}\rvert\leq2}Q_juQ_{j^{\prime}}v+\sum_{j\in\mathbb{Z}}P_{j-2}uQ_jv\\&+\sum_{j\in\mathbb{Z}}Q_juP_{j-2}v

\end{array}
\end{equation}
through the projection operator $P_{j}$ and $Q_{j}$ mentioned below Lemma \ref{lem:2.1111}.

We have to prove the boundedness of operators defined in the above equations \eqref{61eq} and \eqref{62eq}: By the similarity between $\sum_{j\in\mathbb{Z}}Q_juQ_jv$ and
$\sum_{0<\lvert j-j^{\prime}\rvert\leq2}Q_juQ_{j^{\prime}}v$,
by the similarity between $\sum_{j\in\mathbb{Z}}P_{j-2}uQ_jv$ and $\sum_{j\in\mathbb{Z}}Q_juP_{j-2}v$,
we only consider $\sum_{j\in\mathbb{Z}}Q_juQ_jv$ and $\sum_{j\in\mathbb{Z}}P_{j-2}uQ_jv$ respectively.
So we just need to prove the boundedness of paraproduct flow and couple flow:
$
P_{l}(u,v),
P_{l,l',l''}(u,v),
C_{l}(u,v)$ and $C_{l,l',l''}(u,v)$.
We have proved Theorems \ref{para} and \ref{cou} in the above two sections.
Therefore we deduce that the operator $B_{l}(u,v)$ and $B_{l,l',l''}(u,v)$ are bounded.

Each component of the operator $B(u,v)$ can be written as the sum of the operators $B_{l}(u,v)$ and $B_{l,l',l''}(u,v)$.
It implies that  the bilinear operator $B(u,v)$
is bounded from $({ ^{m'}_{m} \dot{B}}^{\frac{n}{p}-1, q}_{p,\infty})^{n}\times ({ ^{m'}_{m} \dot{B}}^{\frac{n}{p}-1, q}_{p,\infty})^{n}$
to $({ ^{m'}_{m} \dot{B}}^{\frac{n}{p}-1, q}_{p,\infty})^{n}$.

\end{proof}

{\bf Now we come to prove Theorem \ref{mthmain}.}
\begin{proof}
\par We establish the wellposedness via the following iterative algorithm:
\begin{equation}\label{eq:6.4}
\begin{split}
u^{(0)}(t,x)&=e^{t\Delta}f(x)\,;\\
u^{(\tau+1)}(t,x)=u^{(0)}(t,x)-B(u&^{(\tau)},u^{(\tau)})(t,x),\qquad   for\;\tau=0,1,2,...\,.
\end{split}
\end{equation}
\par  Given  $1<p<\infty, 1\leq q< \infty, 0\leq m'<\frac{1}{2}, m> 1$ or $1<p<\infty, q= \infty, 0< m'<\frac{1}{2}, m> 1$.
For the term at the first line of the above equation \eqref{eq:6.4},
by Theorem \ref{th:B-to-Y} we obtain that if
$f\in \dot{B}^{\frac{n}{p}-1,q}_{p,\infty}$,
then $ e^{t\Delta} f \in { ^{m'}_{m} \dot{B}}^{\frac{n}{p}-1, q}_{p,\infty}.$
Further, by Theorem \ref{th:bilinear},
the bilinear operator $B(u,v)$
is bounded from $({ ^{m'}_{m} \dot{B}}^{\frac{n}{p}-1, q}_{p,\infty})^{n}\times ({ ^{m'}_{m} \dot{B}}^{\frac{n}{p}-1, q}_{p,\infty})^{n}$
to $({ ^{m'}_{m} \dot{B}}^{\frac{n}{p}-1, q}_{p,\infty})^{n}$.

Finally, by Picard's contraction principle, for any small initial data in $(\dot{B}^{\frac{n}{p}-1,q}_{p,\infty})^{n}$,
we can find a unique global solution in $({ ^{m'}_{m} \dot{B}}^{\frac{n}{p}-1, q}_{p,\infty})^{n}$.
\end{proof}

%{\bf Statements and Declarations}

%Data sharing does not apply to this article, as all data is included in the article.
%All authors approved the submission of this manuscript to this journal.

%{\bf Competing interest}

%The authors declare that they have No competing financial interests exist.

{\bf Acknowledgments}\quad At the Beijing Conference on Harmonic Analysis and Its Applications in 2023, Professor Zhifei Zhang specifically mentioned how to improve the iteration space of Cannone-Meyer-Planchon \cite{CMP}. These ideas of Professor Zhang have a great influence on the arrangement of our article. Further, Professor Zhang gave us a lot of friendly advice on how to change the iteration space and also gave us some references that he cared about. Here we would like to express our sincere thanks to him.

{\bf Conflict of Interest}\quad  The authors declare that they have no conflict of interest.

%%%%%%%%%%%%%%%%%%%%%%%%%%%%%%%%%%%%%%%%%%%%%%%%%%%%%%%%%%%%%%%
%%%%%%%%%%%%%%%%%%%%%%%%%%%%%%%%%%%%%%%%%%%%%%%%%%%%%%%%%%%%%
%%%%%%%%%%%%%%%%%%%%%%%%%%%%%%%%%%%%%%%%%%%%%%%%%%%%%%%%%%%%%%%%%%%%
%%%%%%%%%%%%%%%%%%%%%%%%%%%%%%%%%%%%%%%%%%%%%%%%%%%%%%%%%%%%%%%%%%%%%%%%%%%
%%%%%%%%%%%%%%%%%%%%%%%%%%%%%%%%%%%%%%%%%%%%%%%%%%%%%%%%%%%%%%%%%%%%

%%%%%%%%%%%%%%%%
%%%%%%%%%%%%%%%
%%%%%%%%%%%%%%%%%%%
%%%%%%%%%%%%%%%%%%

%%%%%%%%%%%%%%%
%%%%%%%%%%%%%
%%%%%%%%%%%%%%%%%%%%%%%%%%%%%%%%%%%%%%
%%%%%%%%%%%%%%%%%%%%%%%%%%%%%%%%%%%%%%%
%%%%%%%%%%%%%%%%%%%%%%%%%%%%%%%%%%%%%%

%%%%%%%%%%%%%%%%%%%%%%%%%%%%%%%%%%%%%
%%%%%%%%%%%%%%%%%%%%%%%%%%%%%%%%%%%%%%%
%%%%%%%%%%%%%%%%%%%%%%%%%%%%%%%%%55

%%%%%%%%%%%%%%%%%%%%%%%%%%%%%%%%%%%%%%%%%%%%%%%%%%%%%%%%%%%%%%%%%%%%

\end{document}